\newtheorem{theorem}{Theorem}[section]
\newtheorem{lemma}[theorem]{Lemma}
\newtheorem{proposition}[theorem]{Proposition}
\newtheorem{corollary}[theorem]{Corollary}
\newtheorem*{theorem*}{Theorem}
\theoremstyle{definition}
\newtheorem{definition}[theorem]{Definition}
\newtheorem{remark}[theorem]{Remark}
\newtheorem{example}[theorem]{Example}
\newtheorem*{remark*}{Remark}
\newcommand{\tol}{\mathfrak{c}}
\newcommand{\euO}{\mathfrak O}
\newcommand{\euP}{\mathfrak P}
\newcommand{\euD}{\mathfrak D}
\newcommand{\euA}{\mathfrak A}
\newcommand{\euM}{\mathfrak M}
\newcommand{\eub}{\mathfrak b}
\newcommand{\eua}{\mathfrak a}
\newcommand{\A}{\mathcal{A}}
\newcommand{\DD}{\mathcal{D}}
\newcommand{\EE}{\mathcal{E}}
\newcommand{\bZ}{\mathbb Z}
\newcommand{\Sp}{\mathbb{S}_{p}}
\newcommand{\Spn}{\mathbb{S}_{p^n}}
\newcommand{\LRA}{\Leftrightarrow}
\newcommand{\Gal}{\mathrm{Gal}}
\newcommand{\End}{\mathrm{End}}
\newenvironment{pf-IGMS}{{\noindent \em Proof of Theorem
    \ref{IGMS}. }}{\qed \newline} 
\newenvironment{pf-gencount}{{\noindent \em Proof of Theorem
    \ref{gencount}. }}{\qed \newline} 
\begin{document}
\title[Scaffolds and Galois module structure]{Scaffolds and
  generalized integral Galois module structure}

\author{Nigel P.~Byott,  Lindsay N.~Childs and G.~Griffith Elder}

\address{Department of Mathematics, University of Exeter, Exeter 
EX4 4QF U.K.}  \email{N.P.Byott@exeter.ac.uk}

\address{Department of Mathematics and Statistics, University at
  Albany, Albany, NY 12222 U.S.A.}
\email{lchilds@albany.edu}

\address{Department of Mathematics, University of Nebraska at Omaha, Omaha NE 68182-0243 U.S.A.}  
\email{elder@unomaha.edu}

\date{\today}
\subjclass[2010]{11S15, 20C11, 16T05, 11R33 }
\keywords{Ramification, Galois module structure, Hopf-Galois theory}

\bibliographystyle{amsalpha}

\begin{abstract}
Let $L/K$ be a finite, totally ramified $p$-extension of complete
local fields with residue fields of characteristic $p > 0$, and let
$A$ be a $K$-algebra acting on $L$. We define the concept of an
$A$-scaffold on $L$, thereby extending and refining the notion of
a Galois scaffold considered in several previous papers, where $L/K$ was
Galois and $A=K[G]$ for $G=\Gal(L/K)$. When a suitable
$A$-scaffold exists, we show how to answer questions generalizing
those of classical integral Galois module theory.  We give a necessary
and sufficient condition, involving only numerical parameters, for a given
fractional ideal to be free over its associated order in $A$. We also show
how to determine the number of generators required when it is not
free, along with the embedding dimension of the associated order. In
the Galois case, the numerical parameters are the ramification breaks
associated with $L/K$.  We apply these results to biquadratic Galois
extensions in characteristic 2, and to totally and weakly ramified
Galois $p$-extensions in characteristic $p$. We also
apply our results to the non-classical situation where $L/K$ is a
finite primitive purely inseparable extension of arbitrary exponent
that is acted on, via a higher derivation (but in many different
ways), by the divided power $K$-Hopf algebra. 
\end{abstract}

\maketitle

\section{Introduction}\label{intro}

Let $K$ be a local field with residue field of characteristic $p>0$,
and let $L$ be a finite Galois extension of $K$ with Galois group
$G$. We write $\euO_K$, $\euO_L$ for the valuation rings of $K$, $L$,
respectively, and $\euP_K$, $\euP_L$ for their maximal ideals. Then
$\euO_L$ is a module over the integral group ring $\euO_K[G]$.  By
Noether's criterion \cite{noether}, it is a free module if and only if
the extension $L/K$ is at most tamely ramified. In order to study
integral Galois module structure for wildly ramified extensions,
H.W.~Leopoldt \cite{leopoldt} introduced the associated order
  $$ \euA_{L/K}=\{\alpha\in K[G] :\alpha \cdot \euO_L\subseteq \euO_L\} $$ of
$\euO_L$ in the group algebra $K[G]$. Over the
last 50 years, many authors have investigated, in various situations,
when $\euO_L$ is free as a module over $\euA_{L/K}$, or, more generally,
when a fractional ideal $\euP_L^h$ of $\euO_L$ is free as a module
over its associated order in $K[G]$; see for instance \cite{jacobinski, ferton,
  berge:dihedral, martel, taylor:fg, childs-moss, miyata:cyclic2,
  bondarko:ideals, aiba, desmit:2}. For a comprehensive overview of
this area, and a far more extensive bibliography, we refer the reader
to the survey \cite{thomas:survey}.

Our goal here is to give a systematic presentation of a new approach
to such questions of integral Galois module structure, in a somewhat
generalized sense. This approach is restricted to totally ramified
extensions of local fields $L/K$, whose degree is a power $p^n$ of the
residue characteristic $p$, and which admit an action by a $K$-algebra
$A$ of dimension $p^n$. An {\em $A$-scaffold on $L$} consists of
certain special elements in $A$ which act on suitable elements of $L$
in a way which is tightly linked to the valuation on $L$. The most
obvious setting where scaffolds may occur is that described above,
where $L/K$ is a Galois extension with Galois group $G=\Gal(L/K)$ and
$A=K[G]$. Our approach is not, however, limited to that situation. We
will show in \S5 how it can be applied to a divided power Hopf algebra
$A$ acting in many different ways on an inseparable field
extension. Other inseparable examples have been given by Koch
\cite{Koch-2, Koch-3}. Our approach could also be used for different
Hopf Galois structures on a given separable (but not necessarily
normal) field extension, as described by Greither and Pareigis
\cite{GP}.

When $L/K$ admits an $A$-scaffold, $L$ is a free module over $A$, in
analogy to the Normal Basis Theorem of Galois theory.  We can then
consider any fractional ideal $\euP_L^h$ of $\euO_L$ as a module over
its associated order in $A$,
  $$ \euA(h,A)=\{\alpha\in A :\alpha \cdot \euP_L^h\subseteq
\euP_L^h\}, $$ and ask whether it is a free module. It is in this
sense that our work is concerned with ``generalized'' integral Galois
module structure.  An $A$-scaffold comes with a ``precision''
parameter, and the existence of a scaffold of high enough precision
will enable us to extract a considerable amount of information about
the $\euA(h,A)$-module $\euP_L^h$; not only can we determine if it is
free, but (following \cite{desmit:2} for extensions of degree $p$), we
can also find the minimal number of generators required when it is not
free, and obtain the embedding dimension of $\euA(h,A)$. An important
feature of our approach is that all this information depends on purely
numerical data, namely certain parameters $b_i$ attached to the
scaffold (playing the role of ramification breaks) and the exponent
$h$ of the ideal $\euP_L^h$ under consideration. Given the existence
of a scaffold with specified parameters $b_i$, our results are
therefore, in some sense, universal: they are independent of the
characteristic ($0$ or $p$) of the fields involved, and, in the Galois
case, independent of the precise structure of the Galois group. In
particular, our results make no distinction between abelian and
non-abelian extensions. Moreover, we obtain exactly the same results
for, say, inseparable extensions as for Galois extensions, provided
that the parameters coincide.

The intuition underlying our notion of a scaffold can be explained
somewhat informally as follows. Let $v_K$, $v_L$ denote normalized
valuations such that $v_K(K^\times)=v_L(L^\times)=\mathbb{Z}$. Given
any positive integers $b_i$ for $1\leq i\leq n$ such that $p\nmid
b_i$, there are elements $X_i\in L$ such that $v_L(X_i)=-p^{n-i}b_i$.
Since the valuations, $v_L$, of the monomials
$$\mathbb{X}^a=X_n^{a_{(0)}}X_{n-1}^{a_{(1)}}\cdots X_1^{a_{(n-1)}}:
0\leq a_{(i)}<p,$$ provide a complete set of residues modulo $p^n$
and, since $L/K$ is totally ramified of degree $p^n$, these monomials provide a
convenient $K$-basis for $L$.  The action of $A$ on $L$ is clearly
determined by its action on the monomials $\mathbb{X}^a$.   So if there were
$\Psi_i\in A$ for $1\leq i\leq n$ such that each $\Psi_i$
acts on the monomial basis element $\mathbb{X}^a$ of $L$ as if
it were the differential operator $\partial/\partial X_i$ (with the $X_i$ treated as
independent variables), namely
\begin{equation} \label{deriv-exact}
  \Psi_i\mathbb{X}^a=a_{(n-i)}\mathbb{X}^a/X_i, 
\end{equation}
then the monomials in the $\Psi_i$ (with exponents at most $p-1$)
would furnish a convenient basis for $A$ whose effect on the
$\mathbb{X}^a$ would be easy to follow. As a consequence, the
determination of the associated order of a particular ideal
$\euP_L^h$, and of the structure of this ideal as a module over its
associated order, would be reduced to purely numerical considerations. 
This remains true if (\ref{deriv-exact})
is loosened to the congruence
\begin{equation}  \label{deriv-cong} 
   \Psi_i\mathbb{X}^a\equiv a_{(n-i)}\mathbb{X}^a/X_i 
\pmod{(\mathbb{X}^a/X_i) \euP_L^{\tol}} 
\end{equation}
for a sufficiently large ``precision'' $\tol$. 
The $\Psi_i$, together with the $\mathbb{X}^a$, constitute an
$A$-scaffold on $L$. Our formal definition of an $A$-scaffold
(Definition \ref{tol-scaffold}) is a generalization of this
situation. When the equality (\ref{deriv-exact}) holds, our scaffold
has precision $\tol=\infty$. 

We now explain the background to this work. In the papers 
\cite{elder:scaffold, elder:sharp-crit,byott:scaffold}, the first- and
third-named authors began to develop the theory of scaffolds in the
setting of Galois extensions. There, the parameters $b_i$ of these
Galois scaffolds are just the ramification breaks of the extension
$L/K$. These scaffolds all have precision $\infty$, apart from those on 
cyclic extensions of degree $p^2$ in \cite{elder:sharp-crit}. The main
result of \cite{elder:scaffold} is the existence of a Galois scaffold
for a certain class of arbitrarily large elementary abelian extensions
in characteristic $p$ (the ``near one-dimensional
extensions''). The Galois module structure of the valuation ring in such 
extensions $L/K$ is investigated in \cite{byott:scaffold}, where a necessary
and sufficient condition (in terms of the $b_i$) is given for $\euO_L$
to be free over $\euA_{L/K}$.  This condition turns out to be
equivalent to that given by Miyata \cite{miyata:cyclic2} (and
reformulated in \cite{byott:QJM}) for a class of cyclic Kummer
extensions in characteristic 0. The striking observation that the same
numerical condition holds for two apparently unrelated families of
extensions, differing both in Galois group and in characteristic,
suggests that the methods used in \cite{byott:scaffold} to study
Galois module structure for near one-dimensional extensions might be
applied more widely. The present 
paper develops the machinery to substantiate this idea, while
\cite{elder:sharp-crit} indicates the limitations of our approach by
demonstrating that most extensions will not admit a scaffold. In any
case, our method is necessarily restricted to totally ramified
extensions of $p$-power degree, since if $L/K$ admits an $A$-scaffold,
then it possesses a ``valuation criterion'': there is an integer $b$
such that any element of $L$ of valuation $b$ is a free generator of
$L$ over $A$ (see Proposition \ref{NBT}). This property, which can be
viewed as a strong version of the Normal Basis Theorem, has been
studied in a number of papers \cite{elder:blms, thomas,
  elder:cor-criterion, byott:HVC, desmit:4}, and can only hold when $L/K$ is
totally ramified and of $p$-power degree (see
\cite[Proposition 1.2]{desmit:4} for the Galois case).

When the residue field of $K$ is perfect, we know from
\cite{elder:scaffold} that Galois scaffolds exist for all totally
ramified biquadratic extensions in characteristic $2$, and for all
totally and weakly ramified $p$-extensions in characteristic $p$. To
illustrate the sort of explicit information our methods can yield, we
examine these two classes of extensions in detail (see Theorems
\ref{biquad} and \ref{weak}). However, in this paper we are not
primarily concerned with the problem of actually constructing
$A$-scaffolds. In a separate paper \cite{byott-elder2}, we give a
criterion for a totally ramified Galois $p$-extension to have a Galois
scaffold of a given precision.  This enables us to give an explicit
construction for a class of extensions in characteristic $0$ which
admit Galois scaffolds. These have elementary abelian Galois groups of
arbitrarily large rank, and are the analogs in characteristic $0$ of
the near one-dimensional extensions in characteristic $p$ constructed
in \cite{elder:scaffold}. They include the totally ramified
biquadratic extensions and the totally and weakly ramified
$p$-extensions satisfying some additional hypotheses. Under these
hypotheses, our Galois module results for biquadratic and weakly
ramified extensions in characteristic $p$ will also hold in
characteristic $0$.

Our work is somewhat similar in spirit to that of Bondarko
\cite{bondarko,bondarko:ideals, bondarko:leo}, who considers the existence
of ideals free over their associated orders in the context of totally
ramified extensions of $p$-power degree. (Unlike us, Bondarko only
considers Galois extensions.) Bondarko introduces
the class of semistable extensions. Any such extension contains at
least one ideal free over its associated order, and all such ideals
can be determined from numerical data. Moreover, any abelian extension
containing an ideal free over its associated order, and satisfying
certain additional assumptions, must be semistable. Abelian semistable
extensions can be completely characterized in terms of the Kummer
theory of (one-dimensional) formal groups. 
It would be of interest to understand the precise relationship
between Bondarko's approach and our own, and we
intend to return to this question in future work.

Finally, regarding the hypotheses needed on the ground field $K$, we
note that our main results on $A$-scaffolds do not require the residue
field of $K$ to be perfect. However, in order to construct scaffolds
on particular families of Galois extensions (as we do in
\cite{elder:scaffold, elder:sharp-crit,byott:scaffold, byott-elder2}),
this hypothesis is essential. The hypothesis is also convenient when
discussing higher ramification groups, since the standard exposition
\cite{serre:local} of higher ramification theory makes use of it at
various points. We will therefore not include the condition that $K$
has perfect residue field among the running hypotheses of this paper,
but will impose it from time to time when considering examples.

\subsection{Outline of the paper}

In \S\ref{A-scaffold-sect} we define the notion of an $A$-scaffold on
$L$ and obtain some of its properties. A detailed discussion of the
relationship between the $A$-scaffolds considered here and the Galois
scaffolds of our earlier papers is relegated to an Appendix at the end
of the paper. Our main results, Theorems \ref{IGMS} and
\ref{gencount}, relating $A$-scaffolds to generalized integral
Galois module structure, will be stated and proved in \S3.  In \S4, we
give some applications of our approach to Galois extensions,
discussing in detail biquadratic extensions and weakly ramified
extensions. Finally, in \S\ref{inseparable}, we consider $A$-scaffolds
on inseparable extensions $L/K$, where $A$ is a divided power Hopf
algebra.

\section{$A$-scaffolds}  \label{A-scaffold-sect}

In this section, we consider a totally ramified extension $L/K$ of
local fields, together with a $K$-algebra $A$ which has a $K$-linear
action on $L$. We assume that the residue field $\kappa$ of $K$ has
characteristic $p>0$. The characteristic of $K$ may be either $0$ or
$p$. We do not require $\kappa$ to be perfect. We assume that $L/K$
has degree $p^n$, and that $\dim_K A=p^n$. 

Before giving the definition of an $A$-scaffold on $L$, we require some
notation. We set $\mathbb{S}_{p^n}=\{0,1,\ldots,p^n-1\}$ and
$\mathbb{S}_{p}=\{0,1,\ldots,p-1\}$, and we identify each
$s\in\mathbb{S}_{p^n}$ with its vector of base-$p$ coefficients
$(s)=(s_{(n-1)},\ldots ,s_{(0)})\in\mathbb{S}_p^n$ where
\begin{equation}\label{first-p-adic}
s=\sum_{i=1}^n s_{(n-i)}p^{n-i}.
\end{equation}
This indexing of the base-$p$ digits as $s_{(n-i)}$, where increasing
values of $i$ correspond to decreasing powers of $p$, is natural in
the context of Galois scaffolds, where the $b_i$ are the ramification
breaks (in increasing order), and we need to consider expressions of
the form $\eub(s)$ defined in (\ref{eub}) below.
We will almost always write $s$ in this way. 

We further endow $\mathbb{S}_{p^n}$ with
a partial order that is based upon the usual multi-index partial order
on $\mathbb{S}_p^n$, writing $s\preceq t$ (or $t \succeq s$) if and
only if $s_{(n-i)}\leq t_{(n-i)}$ for $1\leq i\leq n$.  For the
convenience of the reader, we record some facts.
\begin{lemma}\label{preceq}
Let $s,t\in\Spn$ and write $s=\sum_{i=1}^n s_{(n-i)}p^{n-i}$ and
$t=\sum_{i=1}^n t_{(n-i)}p^{n-i}$ where $s_{(n-i)}$, $t_{(n-i)} \in \Sp$.
Then $s \preceq t$ if and only if $s\leq t$ and there are no carries
in the base-$p$ addition of $s$ and $t-s$. Furthermore, the following
are equivalent: 
\begin{enumerate}
\item[(i)] $s_{(n-i)}+t_{(n-i)} \leq p-1$ for $1 \leq i \leq n$;
\item[(ii)] $s \preceq p^n-1-t$;
\item[(iii)] $t \preceq p^n-1-s$; 
\item[(iv)] $s+t\in\Spn$ and $s \preceq s+t$.
\end{enumerate}
\end{lemma}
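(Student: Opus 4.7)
The plan is to start by establishing the first claim (the characterization via carries), since both claims rest on the same elementary fact about base-$p$ digits, namely: if $s_{(n-i)} \leq t_{(n-i)}$ for all $i$, then the differences $t_{(n-i)} - s_{(n-i)}$ lie in $\Sp$ and so give the base-$p$ digit expansion of $t-s$, and conversely a carry-free addition $s + (t-s)$ forces $s_{(n-i)} \leq t_{(n-i)}$ by reading off digits of $t$. So the first assertion reduces to a one-line check each way.

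For the equivalence of (i)--(iv), I would first record the auxiliary fact that $p^n - 1 - t$ has base-$p$ digits $(p-1) - t_{(n-i)}$; this is immediate because $p^n - 1 = \sum_{i=1}^n (p-1) p^{n-i}$ and the subtraction $\sum (p-1)p^{n-i} - \sum t_{(n-i)} p^{n-i}$ is carry-free. Granted this, the condition $s_{(n-i)} + t_{(n-i)} \leq p-1$ is the same as $s_{(n-i)} \leq (p^n - 1 - t)_{(n-i)}$, so (i) $\Leftrightarrow$ (ii); and (ii) $\Leftrightarrow$ (iii) follows by the obvious symmetry of (i) in $s$ and $t$.

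The remaining equivalence (i) $\Leftrightarrow$ (iv) uses the first part of the lemma. If (i) holds, then adding $s$ and $t$ digit-wise produces no carries, so $s+t = \sum_{i=1}^n (s_{(n-i)}+t_{(n-i)}) p^{n-i}$ with all digits in $\Sp$, whence $s+t \in \Spn$ and each digit of $s+t$ dominates the corresponding digit of $s$, giving $s \preceq s+t$. Conversely, if (iv) holds, then writing $u = s+t \in \Spn$ with $s \preceq u$, the first part of the lemma applied to $s \preceq u$ says that the addition $s + (u-s) = s + t$ is carry-free, i.e.\ (i).

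I do not expect a genuine obstacle here: everything boils down to reading off base-$p$ digits, and the only small care needed is to confirm the digit expansion of $p^n - 1 - t$ used in the (i) $\Leftrightarrow$ (ii) step. The proof is essentially bookkeeping about digits, and the most efficient ordering is (part 1) $\Rightarrow$ (i) $\Leftrightarrow$ (ii) $\Leftrightarrow$ (iii), then (i) $\Leftrightarrow$ (iv) via part 1.
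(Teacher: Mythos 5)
Your proof is correct and follows essentially the same route as the paper: the first assertion is proved by the same digit-by-digit comparison of $s$, $t$, and $t-s$, and the equivalences (i)--(iv) (which the paper dismisses as ``then clear'') are filled in exactly as intended, via the carry-free digit expansion of $p^n-1-t$ and an application of the first part to $u=s+t$.
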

\begin{proof}
Assume $s \preceq t$. Then clearly $s\leq t$. Let $m=t-s$, and write 
$m=\sum_{i=1}^n m_{(n-i)}p^{n-i}$ with $m_{(n-i)} \in \Sp$. Since 
$0\leq t_{(n-i)}-s_{(n-i)}<p$, we have $m_{(n-i)}=t_{(n-i)}-s_{(n-i)}$. When
we perform the addition $s_{(n-i)}+m_{(n-i)}$ we get $t_{(n-i)}$ with no carries.
On the other hand, assume that $s\leq t$ and there are no carries
in the base-$p$ addition of $s$ and $m=t-s$. As $m\geq 0$ we have  
$m\in\Spn$, so that
$m=\sum_{i=1}^n m_{(n-i)}p^{n-i}$ for some $m_{(n-i)} \in \Sp$. 
Since there are no carries, $m_{(n-i)}+s_{(n-i)}\leq p-1$ for $1\leq i
\leq n$.
Thus $t_{(n-i)}=m_{(n-i)}+s_{(n-i)}$. Therefore $t_{(n-i)}\geq s_{(n-i)}$
for each $i$, so that $s \preceq t$. The equivalence of (i)--(iv) is
then clear. 
\end{proof}

Associated to an $A$-scaffold on $L$ will be a sequence $b_1, \ldots,
b_n$ of integer {\em shift parameters}, which are required to be
relatively prime to $p$. Using these integers, we define a function
$\eub : \mathbb{S}_{p^n} \longrightarrow \mathbb{Z}$ by
\begin{equation}\label{eub}
\eub(s)=\sum_{i=1}^{n}s_{(n-i)}p^{n-i}b_i. 
\end{equation}
We write $r:\bZ\longrightarrow\mathbb{S}_{p^n}$ for the residue function
$r(a)\equiv a\pmod{p^n}$. The coprimality assumption on the $b_i$ ensures that
$r \circ \eub:\mathbb{S}_{p^n}\longrightarrow\mathbb{S}_{p^n}$ is bijective. 
The function $r\circ(-\eub):\Spn\longrightarrow\Spn$,
defined by $r\circ(-\eub)(s)=r(-\eub(s))$, is therefore also 
bijective. We denote its inverse by
$\eua:\Spn\longrightarrow\Spn$.  Abusing notation, we will also write
$\eua(t)$ for $\eua(r(t))$ where $t\in\mathbb{Z}$, and so regard
$\eua$ as a function $\mathbb{Z} \longrightarrow \Spn$. 
\begin{lemma} \label{eua} 
\ \\ 
\noindent (i) $r\circ\eub$ is determined by the residues $b_i\bmod p^i$;

\noindent (ii) if $b_i\equiv b_n\pmod{p^i}$ for all $i$ then
$\eub(s)\equiv b_n s \pmod{p^n}$ for $s\in\mathbb{S}_{p^n}$;

\noindent (iii) if $s,t\in\mathbb{S}_{p^n}$ and $s\preceq t$
then $\eub(s)+ \eub(t-s)=\eub(t)$;

\noindent (iv) $\eub(\eua(t))\equiv -t \pmod{p^n}$ for all $t \in \bZ$;

\noindent (v) $\eua(-\eub(s))=s$ for all $s \in \Spn$.
\end{lemma}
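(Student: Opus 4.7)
My plan is to treat the five parts as a sequence of largely formal deductions from the definition \eqref{eub} of $\eub$, the definition of $\eua$ as the inverse of $r\circ(-\eub)$, and the characterization of $\preceq$ provided by Lemma~\ref{preceq}. None of the parts should require any deep idea; the work is bookkeeping with base-$p$ digits and with the convention that $\eua$ has been extended from $\Spn$ to $\bZ$ via $\eua(t):=\eua(r(t))$.

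For (i), I would observe directly from \eqref{eub} that each summand $s_{(n-i)}p^{n-i}b_i$ depends on $b_i$ only through its residue modulo $p^i$, because multiplying the change in $b_i$ by $p^{n-i}$ kills it modulo $p^n$. Part (ii) follows immediately from (i): assuming $b_i\equiv b_n\bmod p^i$, I would replace each $b_i$ by $b_n$ in \eqref{eub} modulo $p^n$, factor out $b_n$, and recognize $\sum_{i=1}^n s_{(n-i)}p^{n-i}=s$ from \eqref{first-p-adic}.

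Part (iii) is where I would invoke Lemma~\ref{preceq}: $s\preceq t$ means the base-$p$ addition of $s$ and $t-s$ has no carries, so the base-$p$ digits of $t-s$ are exactly $t_{(n-i)}-s_{(n-i)}$. Substituting these digits into \eqref{eub} for $\eub(t-s)$ and adding $\eub(s)$ term by term reproduces $\eub(t)$. This is the only place the order relation enters, and it is the only part I would call mildly nontrivial — but it reduces to reading off digits once Lemma~\ref{preceq} is in hand.

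For (iv) and (v), I would unwind the definition of $\eua$. By construction, $r\circ(-\eub):\Spn\to\Spn$ is a bijection with inverse $\eua$, so for $t\in\Spn$ we have $r(-\eub(\eua(t)))=t$, i.e., $\eub(\eua(t))\equiv -t\bmod p^n$; the extension $\eua(t):=\eua(r(t))$ for $t\in\bZ$ then gives (iv) at the level of congruences. Part (v) is the other composition: $\eua(r(-\eub(s)))=s$ for $s\in\Spn$, which under the same abuse of notation reads $\eua(-\eub(s))=s$. The main obstacle, such as it is, is simply making sure that the notational conventions (vector of digits, partial order, extension of $\eua$ to $\bZ$) are applied consistently; no substantive estimate or construction is needed.
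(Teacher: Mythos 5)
Your proof is correct, and since the paper dismisses this lemma with the single word ``Clear,'' your routine unwinding of the definitions of $\eub$, $\eua$, and $\preceq$ (via Lemma~\ref{preceq} for part (iii)) is exactly the argument the authors intend the reader to supply. Nothing is missing, and there is no alternative route to compare against.
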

\begin{proof} Clear.\end{proof}

We are now prepared for the definition.
\begin{definition}[$A$-scaffold on $L$] \label{tol-scaffold}
Let $b_1, \ldots, b_n$, $\eub$ and $\eua$ be as above, and let $\tol
\geq 1$. Then an $A$-scaffold on $L$ of precision $\tol$ with
shift parameters $b_1, \ldots, b_n$ consists of 
\begin{enumerate}
\item[(i)] elements $\lambda_t\in L$ for $t\in\bZ$, such that
  $v_L(\lambda_t)=t$ and $\lambda_{t_1}\lambda_{t_2}^{-1}\in K$ whenever
$t_1\equiv t_2 \pmod{p^n}$.
\item[(ii)] elements $\Psi_i\in A$ for $1 \leq i \leq n$, such that $\Psi_i \cdot
  1=0$, and such that, for each $i$ and for each $t\in \mathbb{Z}$,
there exists a unit $u_{i,t}\in\euO_K^\times$ making the following
congruence modulo $\lambda_{t+p^{n-i}b_i}\euP_L^{\tol}$ hold: 
$$\Psi_i \cdot \lambda_t\equiv \begin{cases} 
          \quad u_{i,t} \lambda_{t+p^{n-i}b_i}&
                 \mbox{if } \eua(t)_{(n-i)}\geq 1,\\ 
	\quad 0&\mbox{if } \eua(t)_{(n-i)}=0.
\end{cases} $$
\end{enumerate}
An $A$-scaffold of precision $\infty$ consists of the above data where 
the congruence in (ii) is replaced by equality.
\end{definition}

\begin{remark}   \label{approx}
Condition (ii) in Definition \ref{tol-scaffold} should be interpreted
as saying that the effect of $\Psi_i$ on $\lambda_t$ is approximated
either by a single term or by $0$. The precision $\tol$ determines the
accuracy of this approximation, with a precision of $\infty$
meaning that the ``approximation'' is exact. In more detail, the
approximation works as follows. Since $\Psi_i$ is associated with an
increase of valuation of $p^{n-i}b_i$, we express the effect of
$\Psi_i$ on the basis $\{\lambda_t : 0 \leq t \leq p^n-1\}$ in terms
of the basis $\{\lambda_{p^{n-i}b_i+s} : 0 \leq s \leq p^n-1\}$. Thus
  we have 
$$ \Psi_i \cdot \lambda_t = \sum_{s=0}^{p^n-1} a_{ts}
  \lambda_{p^{n-i}b_i+s}, \qquad a_{ts} \in K. $$ 
Then (ii) says that $a_{ts} \in \pi^{\lceil (t-s+\tol)/p^n\rceil}
\euO_K$ when $t \neq s$, a condition which is independent of $i$, and
each diagonal coefficient $a_{tt}$ is congruent mod $\pi^{\lceil
  \tol/p^n\rceil}$ to either $0$ or a unit of $\euO_K$, according to a
criterion involving $i$ as well as $t$. We observe that the matrix of
exponents $(\lceil t-s+\tol/p^n\rceil)_{1 \leq t,s \leq p^n}$ is
constant on each of the $2p^n-1$ diagonals (from top left to bottom
right) and the main diagonal $t=s$ resides within a band of $p^n$
diagonals where the exponent is $\lceil \tol/p^n\rceil$. How this band
straddles the main diagonal depends on the residue class $\tol \bmod{p^n}$.
\end{remark}

\begin{remark} \label{u-is-1}
In all the examples of $A$-scaffolds known to date, we can take all
the units $u_{i,t}$ in Definition \ref{tol-scaffold}(ii) to be
$1$. Moreover, we can assume $\lambda_{t_1} = \pi^{(t_1-t_2)/p^n}
\lambda_{t_2}$, for some fixed uniformizing parameter $\pi$ of $K$,
whenever $t_1 \equiv t_2 \pmod{p^n}$. The extra generality allowed in
Definition \ref{tol-scaffold} does not significantly add to the
complexity of our arguments, and is included since the flexibility it
provides may be useful in future applications.
\end{remark} 

The reader should keep in mind the following situation.

\begin{definition}[Galois scaffold]  \label{Gal-scaff} 
Suppose that $L/K$ is a Galois extension with Galois group $G$. We
will call a $K[G]$-scaffold on $L$ a {\em Galois scaffold} if the
residue field $\kappa$ is perfect and the shift parameters $b_i$ of
the scaffold are the (lower) ramification breaks $b_1 \leq \ldots \leq
b_n$ of $L/K$, counted with multiplicity in the following sense: we
set $b_i=\max\{j : |G_j|>p^{n-i}\}$ where $G_j=\{ \sigma \in G :
(\sigma-1)\euO_L \subseteq \euP_L^{j+1}\}$ is the $j$th ramification
group. In particular, the existence of a Galois scaffold means that
the ramification breaks $b_i$ are prime to $p$.
\end{definition}

\begin{remark}
In the setting of Definition \ref{Gal-scaff}, $L$ has a subfield $F$
such that $F/K$ is Galois of degree $p$ with ramification break $b_1$.
Moreover, we have $b_i \equiv b_1 \pmod{p}$ for all $i$ by \cite[IV
  \S2 Prop 11]{serre:local}, and $p \nmid b_1$ unless $K$ has
characteristic $0$ and $b_1$ attains its maximal value,
cf.~(\ref{b1-max}) below. Thus the
requirement $p \nmid b_i$ in Definition \ref{Gal-scaff} is 
very mild.
\end{remark}

As explained in the Appendix, the Galois scaffolds considered in
\cite{elder:scaffold, elder:sharp-crit,byott:scaffold} are all Galois
scaffolds in the sense of Definition \ref{Gal-scaff}.

\begin{example}[Galois extensions of degree $p$] \label{deg-p}

We show that a totally ramified Galois extension $L/K$ of degree $p$
admits a Galois scaffold in almost all cases.  There is a unique
ramification break $b_1$, which in characteristic $p$ may be any
positive integer relatively prime to $p$. In characteristic $0$ we
have
\begin{equation} \label{b1-max}
   b_1 \leq p v_K(p)/(p-1), \mbox{ and } p \nmid b_1 \mbox{ unless }
   b_1 = p v_K(p)/(p-1); 
   \end{equation}
see \cite[IV,\S2, Prop.~11 and Ex.~3]{serre:local}.) 

If we exclude the exceptional case $b_1=pv_K(p)/(p-1)$ in characteristic
$0$ then $p \nmid b_1$, and we can obtain a Galois scaffold as
follows. Let $\Psi_1=\sigma-1$, where $\sigma$ is any generator of
$\Gal(L/K)$, let $\pi$ be a uniformizing parameter of $K$, and let
$\rho \in L$ with $v_L(\rho)=b_1$. Then $\eub:\mathbb{S}_p
\longrightarrow \mathbb{Z}$ and $\eua:\mathbb{Z} \to \mathbb{S}_p$ are
given by $\eub(s)=b_1 s$ and $b_1 \eua(t) \equiv -t \pmod{p}$. In
particular, $\eua(b_1)=p-1$. For each $t \in \mathbb{Z}$, put
$f_t=(t-b_1-b_1\eua(b_1-t))/p \in \mathbb{Z}$. Then the elements
$\lambda_t:=\pi^{f_t} \Psi_1^{\eua(b_t-t)} \cdot \rho$
satisfy condition (i) of Definition \ref{tol-scaffold}. Also, $\Psi_1
\cdot 1=0$, and $\Psi_1 \cdot \lambda_t = \lambda_{t+b_1}$ unless
$\eua(b_1-t)=p-1$. But $\eua(b_1-t)=p-1$ precisely when $t \equiv 0
\pmod{p}$, in which case $t=v_L(\lambda_t)=pf_t+pb_1$, $\eua(t)=0$, and
$\Psi_1 \cdot \lambda_t= \pi^{f_t}\Psi_1^p \cdot \rho$. If $K$ has
characteristic $p$ then $\Psi_1^p=(\sigma-1)^p=0$, so $\Psi_1 \cdot
\lambda_t=0$ and we have a Galois scaffold of precision $\tol=\infty$. Now
suppose that $K$ has characteristic $0$. Expanding
$(\Psi_1+1)^p=\sigma^p=1$, we have $\Psi_1^p = - \sum_{j=1}^{p-1}
\binom{p}{j} \Psi_1^j$. Hence
\begin{eqnarray*} 
  v_L(\Psi_1 \cdot \lambda_t) & = & 
  v_L \left( \pi^{f_t} p \Psi_1 \cdot \rho \right) \\
        & = & pf_t + pv_K(p) + b_1 + v_L(\rho) \\
        & = & (t-pb_1) +pv_K(p) + 2b_1. 
\end{eqnarray*}
Thus $v_L(\Psi_1 \cdot \lambda_t) = t+b_1 +[pv_K(p) - (p-1)b_1]$
when $\eua(t)=0$, so we have a Galois scaffold of precision
$\tol=pv_K(p)-(p-1)b_1$.  
\end{example} 

\begin{remark} \label{adjust-shift}
If we replace the element $\Psi_i$ in an $A$-scaffold by $\pi \Psi_i$,
where $\pi$ is some uniformizing parameter of $K$, then we obtain a
new scaffold with the same precision $\tol$, but with the shift
parameter $b_i$ replaced by $b_i 
+p^i$. Suppose that $L/K$ is a Galois extension with ramification breaks
$b_1, \ldots, b_n$. If there exists a Galois scaffold on $L$ (whose
shift parameters are, by definition, the $b_i$), we can adjust the
$\Psi_i$ by powers of $\pi$ to obtain a $K[G]$-scaffold whose shift
parameters are any integers $b'_i$ with $b'_i \equiv b_i \pmod{p^i}$;
this new scaffold will in general not be a
Galois scaffold, since its shift parameters will not coincide with the
ramification breaks. We do not know whether it is possible to have a
$K[G]$-scaffold on a Galois extension $L/K$ with shift parameters
$b'_1, \ldots, b_n'$ that do not satisfy the congruences $b'_i \equiv b_i
\pmod{p^i}$. We do know from \cite{elder:sharp-crit}, however, that if 
$L/K$ is a $C_3\times C_3$-extension in characteristic $3$, and there
exists a $K[G]$-scaffold on $L$ with precision $\tol \geq 1$ and some
shift parameters $b_1'$, $b_2'$, then there will also exist a Galois
scaffold on $L$ (with the ramification breaks $b_1$, $b_2$ as its
shift parameters) of precision $\tol=\infty$.
\end{remark}

\begin{remark} 
In an earlier version of this paper, we called $\tol$ the
``tolerance'' of the scaffold, and this terminology is used by Koch in
\cite{Koch-2}. We thank the referee for suggesting the more
satisfactory word ``precision''.
\end{remark} 

For each $s=\sum_{i=1}^n s_{(n-i)} p^{n-i} \in \Spn$, 
let $\Upsilon^{(s)}$ be the set
of monomials in the (not necessarily commuting) elements $\Psi_ i$
such that, for each $1\leq i\leq n$, the exponents associated with
$\Psi_i$ in the monomial sum to $s_{(n-i)}$.  We write $\Psi^{(s)}$
for the distinguished element
\begin{equation} \label{def-Psi} 
  \Psi^{(s)}=\Psi_n^{s_{(0)}}\Psi_{n-1}^{s_{(1)}}\cdots
    \Psi_1^{s_{(n-1)}} \in \Upsilon^{(s)}. 
\end{equation}
When $A$ is commutative, we have $\Upsilon^{(s)}=\{\Psi^{(s)}\}$.

Suppose that we have an $A$-scaffold as in Definition
\ref{tol-scaffold}. Then it follows inductively that if $t \in \bZ$,
$s \in \Spn$ and $\Psi \in \Upsilon^{(s)}$ then there is a unit
$U_{\Psi,t} \in \euO_K^\times$ such that, modulo
$\lambda_{t+\eub(s)}\euP_L^{\tol}$, we have
\begin{equation}\label{graded}
  \Psi \cdot \lambda_t\equiv \begin{cases}
                \quad U_{\Psi,t}\lambda_{t+\eub(s)}&\mbox{if  
    } s\preceq \eua(t),\\ 
	\quad 0&\mbox{otherwise,}
\end{cases}\end{equation}
and hence
\begin{equation} \label{part-graded-val}
  v_L( \Psi \cdot\lambda_t) \quad \begin{cases} 
   \quad = t+\eub(s) & \mbox{ if } s \preceq \eua(t), \cr
   \quad \geq t + \eub(s) + \tol & \mbox{ otherwise}. 
 \end{cases}
\end{equation}
Thus we have 
\begin{equation}\label{part-graded}
\Psi \cdot \euP_L^t \subseteq \euP_L^{t+\eub(s)}\mbox{ for all } \Psi \in
\Upsilon^{(s)}, \, s\in\Spn,t\in\bZ.
\end{equation}
In particular, \eqref{graded}, \eqref{part-graded-val} and
\eqref{part-graded} hold for 
$\Psi=\Psi^{(s)}$.

\begin{remark}  \label{commute}
Consider the special case of Definition \ref{tol-scaffold} when the
precision is infinite, $\tol=\infty$, and the units are trivial,
$u_{i,t}=1$ for all $i$, $t$. Taking $\Psi=\Psi^{(s)}$ in
(\ref{graded}), we then have the equality
$$ \Psi^{(s)} \cdot \lambda_t = \begin{cases} \quad
  \lambda_{t+\eub(s)}&\mbox{if } s\preceq \eua(t),\\ \quad
  0&\mbox{otherwise.} \end{cases} $$ 
From this we may check that $\{\Psi^{(s)} : s\in\Spn\}$ is a $K$-basis
of $A$ and that $L$ is a free $A$-module of rank $1$ (cf.~Proposition \ref{NBT}
below). Moreover, 
$\Psi^{(r)} \cdot ( \Psi^{(s)} \cdot \lambda_t) = \Psi^{(s)} \cdot (
\Psi^{(r)} \cdot \lambda_t)$ for all $r$, $s \in \Spn$ and all $t \in
\mathbb{Z}$, so that the algebra $A$ is commutative in
this case.  In general, there are two potential sources of
noncommutativity in $A$, namely the ``error terms'' which are implied
by the congruences of Definition \ref{tol-scaffold}(ii), and the units
$u_{i,t}$.

To help fix ideas, we specialize further, assuming in addition that the
shift parameters all satisfy $b_i=1$. (Any totally and weakly ramified
$p$-extension in characteristic $p$ has a scaffold satisfying these
conditions; see \S\ref{weakly} below.)  Then $\eub(s)=s$ for all
$s\in\Spn$, and \eqref{part-graded} states that $$\Psi^{(s)} \cdot
\euP_L^t\subseteq \euP_L^{t+s}\mbox{ for all } 
s\in\Spn,t\in\bZ.$$ 
\end{remark} 

The Normal Basis Theorem ensures, in the Galois case, that $L$ is a
free $K[G]$-module of rank $1$. We now show that a similar assertion
holds whenever we have an $A$-scaffold. Furthermore, $L/K$ satisfies
the stronger condition of having a ``valuation criterion'' for its
$A$-module generator.
 
\begin{proposition} \label{NBT}
Let $L/K$ have an $A$-scaffold of precision $\tol \geq 1$. Then
$\{\Psi^{(s)} :s\in\Spn\}$ is a $K$-basis of $A$. Moreover, let $b$
be any integer that satisfies $\eua(b) =p^n-1$, and let $\rho\in L$
with $v_L(\rho)=b$.  Then $L$ is a free $A$-module on the generator
$\rho$. Additionally, for each $h \in \bZ$, the ring
$\euA(h,A)=\{\alpha\in A:\alpha \cdot \euP_L^h\subseteq \euP_L^h\}$ is
an $\euO_K$-order in $A$.
\end{proposition}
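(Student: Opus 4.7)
My plan is to derive all three parts---$K$-basis of $A$, freeness of $L$ on $\rho$, order property of $\euA(h,A)$---from a single computation of $v_L(\Psi^{(s)}\rho)$, made possible by the hypothesis $\eua(b')=p^n-1$. This hypothesis forces every $s \in \Spn$ to satisfy $s \preceq \eua(b')$, so on expanding $\rho = \sum_{t \in \Spn} c_t \lambda_t$ in the $K$-basis $\{\lambda_t : t \in \Spn\}$ of $L$ and isolating the unique dominant index $t_0 = r(b')$ (where $v_L(c_{t_0}\lambda_{t_0}) = b'$ and $v_L(c_t\lambda_t) > b'$ for $t \neq t_0$), application of \eqref{graded} with $\Psi = \Psi^{(s)}$ to $\lambda_{t_0}$ (using $\eua(t_0) = p^n-1$) pins the dominant term of $\Psi^{(s)}\rho$ at valuation $b' + \eub(s)$. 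The tolerance $\tol \geq 1$ keeps both the scaffold error and the contributions from $t \neq t_0$ strictly higher, so $v_L(\Psi^{(s)}\rho) = b' + \eub(s)$ for each $s$.

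Because $r \circ \eub$ is a bijection of $\Spn$, these $p^n$ valuations lie in distinct residue classes modulo $p^n$, so the standard ultrametric argument gives $K$-linear independence of $\{\Psi^{(s)} \rho : s \in \Spn\}$; as $\dim_K L = p^n$, they form a $K$-basis of $L$. The $K$-linear evaluation map $A \to L$, $\alpha \mapsto \alpha\rho$, is then a surjection between equidimensional $K$-vector spaces, hence an isomorphism, giving both freeness of $L$ on $\rho$ and (by transporting the basis) that $\{\Psi^{(s)}\}$ is a $K$-basis of $A$.

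It remains to handle $\euA(h,A)$. It is evidently a unital $\euO_K$-subalgebra of $A$, and the identity $\euA(h,A) \otimes_{\euO_K} K = A$ follows from \eqref{part-graded}, since $\pi^{N_s} \Psi^{(s)} \in \euA(h,A)$ whenever $p^n N_s + \eub(s) \geq 0$. I expect finite generation over $\euO_K$ to be the only delicate step; my plan is to reuse $\rho$. Pick $N$ with $\pi^N \rho \in \euP_L^h$; then for any $\alpha = \sum_s c_s \Psi^{(s)} \in \euA(h,A)$ we have $\pi^N \alpha \rho = \alpha(\pi^N \rho) \in \euP_L^h$, so $v_L(\alpha\rho) \geq h - p^n N$. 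Since the summands $c_s \Psi^{(s)} \rho$ have valuations $p^n v_K(c_s) + b' + \eub(s)$ in pairwise distinct residue classes modulo $p^n$, the ultrametric forces this bound term-by-term, yielding uniform lower bounds $v_K(c_s) \geq M_s$ independent of $\alpha$. Hence $\euA(h,A) \subseteq \bigoplus_s \pi^{M_s} \euO_K \Psi^{(s)}$, a free $\euO_K$-lattice in $A$, and Noetherianity of $\euO_K$ completes the proof.
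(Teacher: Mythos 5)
Your proof is correct and follows essentially the same route as the paper: both compute $v_L(\Psi^{(s)}\cdot\rho)=b'+\eub(s)$ by isolating the dominant term of $\rho$ in its $\lambda_t$-expansion (using that $\eua(b')=p^n-1$ forces $s\preceq\eua(b')$ for every $s\in\Spn$), and then deduce the basis and freeness claims from the distinct residue classes mod $p^n$ and a dimension count. The only difference is that you spell out the spanning and finite-generation arguments for $\euA(h,A)$, which the paper dismisses as ``immediate''; your version of that step is also correct.
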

\begin{proof}
Since $\eua \colon \Spn \to \Spn$ is bijective, the condition
$\eua(b) =p^n-1$ determines $b$ uniquely mod $p^n$. We have 
$\rho=u\lambda_{b}+\sum_{i>b}a_i\lambda_i$ for $u\in\euO_K^\times$
and $a_i\in\euO_K$.  From (\ref{part-graded-val}), for $i>b$ and for
each $s \in \mathbb{S}_{p^n}$ we have 
$v_L(\Psi^{(s)} \cdot a_i\lambda_i)>v_L(\Psi^{(s)} \cdot
u\lambda_{b})=b+\eub(s)$. Thus
$v_L(\Psi^{(s)} \cdot \rho)=b+\eub(s)$ for each $s \in
\mathbb{S}_{p^n}$. Since $\eub : \mathbb{S}_{p^n} \longrightarrow
\mathbb{S}_{p^n}$ is surjective, these valuations represent all
residue classes mod $p^n$. As $L/K$ is totally ramified, it follows
that $\{\Psi^{(s)} \cdot \rho:s\in\Spn\}$ is a $K$-basis for
$L$. Thus $A \cdot \rho = L$, and, comparing dimensions, $L$
is a free $A$-module on the generator $\rho$. Moreover, the
$\Psi^{(s)}$ must be linearly independent over $K$. Since $\dim_K
A=p^n$, it follows that the $\Psi^{(s)}$ form a $K$-basis of $A$. As
$L$ is a free $A$-module and $\euP_L^h$ spans $L$ over $K$, it is
immediate that $\euA(h,A)$ is an $\euO_K$-order in $A$.
\end{proof}

\begin{remark}\label{galois-case}
Suppose we have a Galois scaffold on an abelian extension $L/K$.  By
the Hasse-Arf Theorem \cite[V, \S7]{serre:local}, the ramification
breaks $u_1, \ldots, u_n$ in the upper numbering are
integers. Translating to the lower numbering, we obtain the
congruences $b_i \equiv b_n \pmod{p^i}$. Thus we have $\eub(s) \equiv b_n
s \pmod{p^n}$ and $b_n \eua(t) \equiv -t \pmod{p^n}$. In particular, we
can then take $b$ in Proposition \ref{NBT} to be $b_n$. The same will
hold if $L/K$ is a nonabelian Galois extension which satisfies the
conclusion of the Hasse-Arf Theorem.

If $L/K$ is a Galois extension not necessarily satisfying the
conclusion of the Hasse-Arf Theorem, then the $u_i$ need not be
integers. In this case, the condition $\eua(b)=p^n-1$ is equivalent to
$b \equiv b_n-p^n u_n \pmod{p^n}$. Thus Proposition \ref{NBT} agrees
with the valuation criterion for a normal basis generator in
\cite{elder:cor-criterion}.
\end{remark}

\section{Integral $A$-module structure} \label{IGMS-sect}

\subsection{Statement of the main results}
Fix $L/K$ and $A$ as in \S\ref{A-scaffold-sect}.  Assume that there is
an $A$-scaffold on $L$ of precision $\tol\geq 1$ as in Definition
\ref{tol-scaffold}.  Thus we have shift parameters $b_1, \ldots, b_n$ and
the associated functions $\eub$ and $\eua$, as well as elements
$\lambda_t \in L$ with $v_L(\lambda_t)=t$ for each $t \in \bZ$. 
By Proposition \ref{NBT}, we also have a $K$-basis 
$\{\Psi^{(s)}:s\in\Spn\}$ of $A$. We
choose once and for all a uniformizing parameter $\pi$ of $K$. 

Now let $h \in \bZ$, and consider the
fractional $\euO_L$-ideal $\euP_L^h$ as a module over its associated
order 
\begin{equation} \label{def-Ah}
 \euA:=\euA(h,A)=\{\alpha\in A:\alpha \cdot \euP_L^h\subseteq
\euP_L^h\}
\end{equation}
in $A$. If $h'=h+p^n m$ for some $m \in \bZ$ then $\euP_L^{h'}= \pi^m
\euP_L^h$. It follows that
$\euA(h',A)=\euA(h,A)$, and that $\euP_L^{h'}$ and $\euP_L^h$ are
isomorphic as modules over this order. Thus $h$ only matters up to
congruence mod $p^n$.

Both the order $\euA$, and the structure of $\euP_L^h$ over
$\euA$, depend only on the residue class $h \pmod{p^n}$.  Let
$\Spn(h)=\{t \in \bZ \ :\ h \leq t <h+p^n\}$. Note that
$\Spn(0)=\Spn$, and that $\{\lambda_t \: :\ t \in \Spn(h)\}$ is an
$\euO_K$-basis for $\euP_L^h$. We now fix a specific choice of $b$ in
Proposition \ref{NBT} (where $b$ was only determined mod $p^n$) by
stipulating
\begin{equation} \label{b-def}
 \eua(b)=p^n-1, \qquad  b  \in \Spn(h).  
 \end{equation}
Thus we have  $L=A\cdot\lambda_b$.

For each $s \in \Spn$ we define
\begin{equation} \label{def-d}
   d(s)= \left\lfloor \frac{\eub(s)+b-h}{p^n}\right\rfloor. 
\end{equation}
In particular, $d(0)=0$ since $b-h \in \Spn$. 
We also define 
\begin{equation} \label{def-ws}
 w(s) = \min \{ d(u)-d(u-s)
 \ : \ u \in \Spn, \; u \succeq s\}.
\end{equation}
Using Lemma \ref{preceq}, we have
$$ w(s) = \min \{ d(s+j)-d(j) \ : \ j \in \Spn, \; j \preceq
p^n-1-s\}.  $$ 
In particular, $d(s)-1 \leq w(s) \leq d(s)-d(0)=d(s)$ for all 
$s\in\Spn$. Note that whether or not
the upper bound $w(s)=d(s)$ is achieved depends only on the residue
classes $b_i \pmod{p^i}$, not the integers $b_i$ themselves.  In any
case, it is important to realize that both $d(s)$ and $w(s)$, as well
as $b$ and $b-h$, depend on $b_1, \ldots, 
b_n$ and on $h$, although we do not indicate this dependence explicitly in our
notation. 

For $s \in \Spn$, we normalize the $\Psi^{(s)}$ in \eqref{def-Psi},
and set
$$ \Phi^{(s)} = \pi^{-w(s)} \Psi^{(s)} .$$

The first of our main results explains how the existence of an
$A$-scaffold of high enough precision allows us to give an explicit 
description of $\euA$, and to determine whether or not 
$\euP_L^h$ is free over $\euA$, using only the numerical invariants
$w(s)$ and $d(s)$.

\begin{theorem}\label{IGMS}
Let $L/K$ admit an $A$-scaffold of precision $\tol$ with shift
parameters $b_1,\ldots ,b_n$.  Fix a fractional ideal $\euP_L^h$, and
let $\euA$, $b$, $d(s)$ and $w(s)$ be defined as in
(\ref{def-Ah})--(\ref{def-ws}).
\begin{itemize}
\item[(i)] 
Suppose that $\tol \geq \max(b-h,1)$. Then   $\{\Phi^{(s)}: s \in
\Spn\}$ is an $\euO_K$-basis of $\euA$. If 
$w(s)=d(s)$ for all $s \in \Spn$, then 
$\euP_L^h$ is free over $\euA$ with $\euP_L^h=\euA \cdot \lambda_b$.
\item[(ii)]
Now suppose that the stronger condition $\tol\geq p^n+b-h$ holds. Then 
$\euP_L^h$ is free over $\euA$ if and only if $w(s)=d(s)$
  for all $s \in \Spn$.  Moreover, when $\euP_L^h$ is free over
  $\euA$, we have $\euP_L^h=\euA \cdot \rho$ for any $\rho\in L$ with
  $v_L(\rho)=b$. 
\end{itemize}
\end{theorem}

\begin{remark} \label{2pn}
Since $b$ was chosen so that $b-h\in \Spn$, the
stronger condition $\tol\geq p^n+b-h$ holds for all ideals
if the $A$-scaffold has precision $\tol \geq 2p^n-1$.
\end{remark}

\begin{example}[Galois extensions of degree $p$] \label{deg-p-consequences}
For a totally ramified Galois extension $L/K$ of degree $p$, the
Galois module structure, both of the valuation ring $\euO_L$ and of its
fractional ideals $\euP_L^h$, has been studied extensively.  We
briefly review the existing results and relate them to Theorem
\ref{IGMS}.

For the valuation ring itself, we have $h=0$, so the number $b$ in
Theorem \ref{IGMS} is just the least positive residue $r(b_1)$ of
$b_1$ mod $p$. For $K$ of characteristic $0$, Bertrandias and Ferton
\cite{ferton} show that $\euO_L$ is free over its associated order
if and only if $b$ divides $p-1$, provided that $b_1$ is not too close
to its maximal value. (See \cite{bbferton} for the excluded cases.)
Now $d(s)=\lfloor (b_1 s+b)/p \rfloor$, and one can verify that our
condition $w(s)=d(s)$ in this case is equivalent to $b \mid (p-1)$. We
therefore recover the result of Bertandias and Ferton whenever we
have a Galois scaffold with $\tol \geq b+p$; by Example \ref{deg-p}, this
occurs when
\begin{equation} \label{b1-deg-p}
   b_1 < \frac{p v_K(p)}{p-1} - 2.
\end{equation}
In characteristic $p$, Aiba \cite{aiba} gives a different condition for
$\euO_L$ to be free, but his condition can be shown to be equivalent
to $b \mid (p-1)$; de Smit and Thomas \cite{desmit:2} also obtain $b
\mid (p-1)$. Since there is a Galois scaffold with $\tol=\infty$,
these results follow from our Theorem \ref{IGMS}, exactly as in
characteristic $0$ (but with no upper bound on $b_1$). 

We now consider arbitrary ideals $\euP_L^h$. In characteristic $0$,
Ferton \cite{ferton:ideals} determines which ideals are free over
their associated orders, giving her result in terms of the continued
fraction expansion of $b_1/p$.  A corresponding result in
characteristic $p$ is given by Huynh \cite{huynh}, who gives a
different criterion but proves it is equivalent to Ferton's. Our
condition, $w(s)=d(s)$ for all $s$, must therefore be equivalent to
Ferton's continued fraction criterion. This equivalence is verified in
\cite{Maria} (which also contains some partial results relating our
Theorem \ref{gencount} below to continued fractions). Given this
equivalence, and assuming (\ref{b1-deg-p}) in the characteristic $0$
case, the results of Ferton and Huynh follow from our Theorem
\ref{IGMS}.
\end{example}

The following example considers another situation where the technical
details associated with Theorem \ref{IGMS} are easy to digest.

\begin{example}[$b_i \equiv -1$]\label{bi-1}
Suppose that $L/K$ is totally ramified extension of degree $p^n$ (for
arbitrary $n \geq 1$) which admits an $A$-scaffold with precision
$\tol \geq p^n-1$ such that $b_i \equiv -1 \pmod{p^i}$ for each
$i$. We consider the valuation ring $\euO_L$ (so $h=0$). Write
$b_i=-1+m_i p^i$ with $m_i \in \mathbb{Z}$. Using \eqref{eub}, we see
that $\eub(s)=-s+(\sum_{i=1}^ns_{(n-i)}m_i)p^n\equiv -s
\pmod{p^n}$. Thus $b=p^n-1$ and $d(s)=\sum_{i=1}^ns_{(n-i)}m_i =
\sum_{i=1}^ns_{(n-i)} d(p^{n-i})$.  In particular, $d(s)+d(j)=d(s+j)$
for all $j\in\Spn$ with $j\preceq p^n-1-s$, so that $w(s)=d(s)$ for
all $s$. Moreover, $w(s)=\sum_{i=0}^{n-1}s_{(i)}w(p^{i})$. Thus by
Theorem \ref{IGMS}(i), $\euO_L$ is free over $\euA$, and $\euA$ has
the particularly simple form:
$$ \euA  = \euO_K\left[\pi^{-m_1} \Psi_1,\pi^{-m_2} \Psi_2,\ldots,
  \pi^{-m_n} \Psi_n \right]. $$
\end{example}

We make one further remark, concerning the precision in
Theorem \ref{IGMS}. 

\begin{remark} \label{tol-or-alg}
In some cases it is possible to relax the assumptions on $\tol$ in
Theorem \ref{IGMS} at the expense of stronger assumptions on the
$\Psi_i$ in Definition \ref{tol-scaffold}. For example, in
\cite[Theorem 1.1]{elder:sharp-crit} we give a freeness criterion,
which is equivalent to that in Theorem \ref{IGMS}(ii), for the
valuation ring of a cyclic extension of degree $p^2$ in characteristic
$p$ admitting a different sort of ``scaffold''.  From the
perspective of Definition \ref{tol-scaffold}, this is a Galois
scaffold of precision $\tol=b_2-pb_1$, but this value is not used in
the proof of the result. In fact, although the residue class
$b_1\equiv b_2\pmod{p^2}$ satisfied by the ramification breaks could be
any class mod $p^2$ relatively prime to $p$, the proof of the result
requires only that the ``scaffold'' have precision $\tol \geq 1$. In
contrast, we would need to assume that $\tol \geq 2p^2-1$ to guarantee
that Theorem \ref{IGMS} applies for all possible values of the
ramification breaks. The result in \cite{elder:sharp-crit} depends on
the fact that the ``scaffold'' there satisfies the additional
relations $\Psi_1^p=\Psi_2$ and $\Psi_2^p=0$.
\end{remark}

The second of our main results, Theorem \ref{gencount}, adapts the
techniques of \cite{desmit:2} (see in particular Theorem 4) to extract
some further information from the numerical data $d(s)$ and
$w(s)$. For $s$, $t \in \Spn$, we write $s \prec t$ if $s \preceq t$
and $s \neq t$. Let
$$ \DD = \{ u \in \Spn \, : \, d(u)>d(u-s)+w(s) \mbox{ for all } s \in
\Spn \mbox{ with } 0 \prec s \preceq u \}; $$
$$ \EE = \{ u \in \Spn \, : \, w(u)>w(u-s)+w(s) \mbox{ for all } s \in
\Spn \mbox{ with } 0 \prec s \prec u \}. $$ 
Note that $0 \in \DD$ and $0$, $1, p, \ldots, p^{n-1} \in \EE$ since
there are no relevant $s$ in these cases. Thus we always have $|\DD|
\geq 1$ and $|\EE| \geq n+1$. Again, the dependence on $h$ and on the
$b_i$ is suppressed from the notation.

\begin{theorem} \label{gencount}
Let $L/K$ be as in Theorem \ref{IGMS}, with the strong condition $\tol\geq
p^n+b-h$. Then the minimal number of generators of the $\euA$-module
$\euP_L^h$ is $|\DD|$.  Also, $\euA$ is a (not necessarily
commutative) local ring with residue field $\kappa = \euO_K/\euP_K$,
and, writing $\euM$ for its unique maximal ideal, the embedding
dimension $\dim_\kappa (\euM/\euM^2)$ of $\euA$ is $|\EE|$.
\end{theorem}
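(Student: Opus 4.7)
My plan is to establish the three assertions---minimal number of generators of $\euP_L^h$ equal to $|\DD|$, locality of $\euA$ with residue field $\kappa$, and embedding dimension equal to $|\EE|$---by reducing modulo $\pi$ and exploiting the explicit multiplication formulas the scaffold imposes on $\bar\euA := \euA/\pi\euA$. Throughout I would use the $\euO_K$-basis $\{\Phi^{(s)}\}$ of $\euA$ from Theorem \ref{IGMS}, the reindexed basis $\{\lambda_{u_j}\}_{j \in \Spn}$ of $\euP_L^h$ with $u_j := b+\eub(j)-d(j)p^n$, and crucially the strong tolerance.

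First I would compute the action and multiplication on $\bar M := \euP_L^h/\euP_L^{h+p^n}$. Setting $f_j$ for the class of $\lambda_{u_j}$, the scaffold congruences yield $\bar\Phi^{(s)} f_j = (\textrm{unit})\cdot f_{j+s}$ exactly when $j + s$ is carry-free in base $p$ and $d(j+s) = d(j)+w(s)$, vanishing otherwise. After checking that $\bar\euA$ acts faithfully on $\bar M$ (distinct $\bar\Phi^{(s)}$ send $f_j$ to multiples of distinct $f_{j+s}$, and each $\bar\Phi^{(s)}$ acts nontrivially on $f_{u^*-s}$ for $u^*$ achieving the minimum in the definition of $w(s)$), composition gives $\bar\Phi^{(s)}\bar\Phi^{(t)} = (\textrm{unit})\cdot\bar\Phi^{(s+t)}$ when $s+t$ is carry-free and $w(s+t) = w(s)+w(t)$, and $0$ otherwise. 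Every $\bar\Phi^{(s)}$ with $s \neq 0$ is thus nilpotent, so $\bar\euM := \sum_{s \neq 0}\kappa\bar\Phi^{(s)}$ is a nilpotent two-sided ideal with $\bar\euA/\bar\euM = \kappa$; hence $\bar\euA$ is local. Any maximal ideal of $\euA$ must contain $\pi$ (else $\euA/\mathfrak{m}$ would be a finite torsion-free $\euO_K$-module that is a field, which is impossible), so $\euA$ is local with residue field $\kappa$ and maximal ideal $\euM = \pi\euA + \sum_{s \neq 0}\euO_K\Phi^{(s)}$.

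For the count $|\DD|$, Nakayama gives the minimum number of generators of $\euP_L^h$ over $\euA$ as $\dim_\kappa(\bar M/\bar\euM\bar M)$. The action rule shows $\bar\euM\bar M$ is spanned by those $f_u$ for which some $j \prec u$ satisfies $d(u) = d(j)+w(u-j)$; combined with the basic inequality $d(u) \geq d(u-s)+w(s)$ valid for all $s \preceq u$ (take $v = u$ in the minimum defining $w(s)$), the surviving $f_u$ are precisely those with $u \in \DD$.

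For the embedding dimension, I apply the same Nakayama principle to $\euM$ as a left $\euA$-module and show that $G := \{\pi\} \cup \{\Phi^{(s)} : s \in \EE \setminus \{0\}\}$ is a minimal generating set. Generation proceeds by induction on the digit sum: for $u \in \Spn \setminus (\EE \cup \{0\})$, pick $u = s + t$ with $0 \prec s \prec u$ and $w(u) = w(s)+w(t)$; the product $\Phi^{(s)}\Phi^{(t)}$ (in Case~A) then has a unit $\Phi^{(u)}$-coefficient, so using that the other coefficients $c_{u'}^{s,t}$ lie in $\pi\euO_K \subseteq \euA\cdot G$, rearrangement places $\Phi^{(u)}$ in $\euA G$. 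For linear independence in $\euM/\euM^2$, I would use $|\EE|$ functionals: for each $s \in \EE \setminus \{0\}$, the ``$\Phi^{(s)}$-coefficient modulo $\pi$'' functional on $\euM$ vanishes on $\euM^2$ (since $s \in \EE$ forbids $w(s) = w(s')+w(t')$ for any nontrivial decomposition, forcing every $\Phi^{(s)}$-coefficient of a product $\Phi^{(s')}\Phi^{(t')}$ into $\pi\euO_K$), and together with a ``$\Phi^{(0)}$-coefficient divided by $\pi$, modulo $\pi$'' functional separating $\pi$ from the remaining generators. The subtle remaining step---the main obstacle of the proof---is showing that this last functional also factors through $\euM/\euM^2$, equivalently that $\pi \notin \euM^2$, equivalently that the $\Phi^{(0)}$-coefficient $c_0^{s',t'}$ of every $\Phi^{(s')}\Phi^{(t')}$ with $s',t' \neq 0$ lies in $\pi^2\euO_K$. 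This is where the strong tolerance $\tol \geq p^n + b - h$ enters decisively: a careful evaluation of $\Phi^{(s')}\Phi^{(t')}\lambda_b$ modulo $\pi^2$, using the tolerance to suppress all scaffold approximation errors to sufficiently high order, shows that the $\lambda_b$-coefficient of the result (which equals $c_0^{s',t'}$ plus contributions from the other $c_u^{s',t'}\Phi^{(u)}\lambda_b$ terms, each themselves controlled) lies in $\pi^2\euO_K$, yielding the required vanishing and hence the full $|\EE|$-count.
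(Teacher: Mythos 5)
Your overall strategy coincides with the paper's: reduce modulo $\pi$, derive the multiplication rule $\Phi^{(r)}\Phi^{(s)}\equiv c\,\Phi^{(r+s)}$ or $0$ (this is Proposition \ref{psi-prod}), get locality from the strictly increasing digit sums of the indices in a nonvanishing product, and apply Nakayama twice --- to $\euP_L^h$ for $|\DD|$ and to $\euM$ for $|\EE|$. Those parts are sound: your description of the action on the reindexed basis ($\bar\Phi^{(s)}f_j\neq 0$ precisely when $j+s$ is carry-free and $d(j+s)=d(j)+w(s)$) is exactly Lemma \ref{t-u} combined with Proposition \ref{psi-vals}, and your faithfulness argument works because distinct $s$ send $f_j$ to distinct basis vectors, so no cancellation can occur.

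The gap is the step you yourself flag as the main obstacle: $\pi\notin\euM^2$, i.e.\ that the $\Phi^{(0)}$-coefficient $c_0$ of each product $\Phi^{(s')}\Phi^{(t')}$ with $s',t'\neq 0$ lies in $\pi^2\euO_K$. You propose to extract this from a valuation estimate on $\Phi^{(s')}\Phi^{(t')}\cdot\lambda_b$ ``using the strong tolerance,'' but this does not work: the hypothesis $\tol\geq p^n+b-h$ only pushes the scaffold's error terms down to valuation about $b+p^n$ in the relevant residue class (one extra factor of $\pi^{p^n}$ relative to $h$), whereas forcing $c_0\in\pi^2\euO_K$ by this route would require control to valuation $\geq b+2p^n$. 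The tolerance is the wrong tool for this step. The fact you need is that $c_0$ vanishes \emph{identically}, for a reason having nothing to do with the tolerance: since each $\Psi_i\cdot 1=0$, every $\Phi^{(u)}$ with $u\neq 0$ annihilates $1$, hence so does the product $\Phi^{(s')}\Phi^{(t')}$; applying the expansion $\sum_u c_u\Phi^{(u)}$ to $1$ then gives $c_0=0$ outright. Equivalently (this is how the paper phrases it), the product lies in the augmentation ideal $A^+=\{a\in A: a\cdot 1=0\}$, which is spanned by the $\Phi^{(u)}$ with $u\neq 0$, and $\pi\euA\cap A^+=\pi\euM\cap A^+$. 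With that observation substituted for your unproven claim, the rest of your argument closes; in particular your coefficient functionals for $s\in\EE\setminus\{0\}$ are fine as stated.
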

 
Since $L$ is a free $A$-module by Proposition \ref{NBT}, the minimal
number of generators of $\euP_L^h$ over $\euA$ is one precisely when
$\euP_L^h$ is free over $\euA$.

\subsection{Proofs}  \label{IGMS-proof}

We keep the notation of the previous subsection. In particular, $L/K$
admits an $A$-scaffold with precision $\tol \geq 1$ and with shift
parameters $b_1. \ldots, b_n$, giving rise to the functions $\eub :
\Spn \to \mathbb{Z}$ and $\eua : \Spn \to \Spn$. We fix $h \in
\mathbb{Z}$ and study the ideal $\euP_L^h$ as a module over its
associated order $\euA:=\euA(h,a)$. Recall that $b$ is the unique
integer satisfying (\ref{b-def}).

Our goal in this subsection is to prove Theorems \ref{IGMS} and
\ref{gencount}, but we first provide an overview of the strategy of
the proofs. The reader might find it helpful initially to consider the
special case $\tol=\infty$, $u_{i,t}=1$ in Remark \ref{commute} (which
forces $A$ to be commutative), and further to suppose that $b_1=\cdots
=b_n=b$, so that $\eub(s)=bs$.

Let $t \in \Spn(h)$ and $s \in \Spn$. If $s \preceq \eua(t)$ and $\Psi
\in \Upsilon^{(s)}$ then by (\ref{part-graded-val}) the element $\Psi \cdot
  \lambda_t$ has valuation $t+\eub(s)$. We wish to relate this
  element to the $\euO_K$-basis $\{ \lambda_m : m \in \Spn(h)\}$ of
    $\euP_L^h$, so, for any $t \in \Spn(h)$ and $s \in \Spn$, we write
\begin{equation} \label{H-d}
   t+\eub(s) = H(s,t) + p^n D(s,t) \mbox{ with } H(s,t) \in \Spn(h). 
\end{equation}
Thus we have
\begin{equation} \label{def-H-d}
  D(s,t)= \left\lfloor \frac{t+\eub(s)-h}{p^n}\right\rfloor, \qquad
   H(s,t) = h + r(t+\eub(s)-h).
\end{equation}	
In particular, comparing with (\ref{def-d}), we have
\begin{equation} \label{Dd}
  D(s,b) = d(s).
\end{equation}

By Proposition \ref{NBT}, $\lambda_b$ has the normal basis property
$L=A \cdot \lambda_b$, so we seek to compare $\Psi^{(s)} \cdot
\lambda_t$ with $\Psi^{(u)} \cdot \lambda_b$ where $u \in \Spn$ is
chosen to make the valuations of these elements agree mod $p^n$. Thus
we require $H(u,b)=H(s,t)$. There will be a unique $u$ with this
property, since $H(u,b)$ realizes each element of $\Spn(h)$ exactly
once as $u$ varies in $\Spn$.

In order to translate between $t$ and $u$ (for a fixed $s$), we will
need a number of facts which depend on the properties of $\eub$ and
$\eua$ given in Lemma \ref{eua}. These facts are recorded in Lemma
\ref{t-u}. We are interested in the valuations of the
elements $\Phi^{(s)} \cdot \lambda_t = \pi^{-w(s)} \Psi^{(s)} \cdot
\lambda_t$ or, more generally,
$\pi^{-w(s)} \Psi \cdot \lambda_t$ for any $\Psi \in \Upsilon^{(s)}$.
In Proposition \ref{psi-vals} we determine some of these valuations
precisely, and bound the rest in terms of $\tol$. To prove
Theorem \ref{IGMS}, we then use this information to obtain an explicit
description of the associated order $\euA$ and to determine when
$\euP_L^h$ is free over $\euA$.

Before proving Theorem \ref{gencount}, we need to deal with the fact
that $\euA$ need not in general be commutative. We show in
Proposition \ref{psi-prod} that any two of our basis elements
$\Phi^{(r)}$, $\Phi^{(s)}$ of $\euA$ commute mod $\pi \euA$ up to
multiplication by a unit in $\euO_K$.

We begin the proof of Theorem \ref{gencount} by showing that the
$\euO_K$-lattice $\euM$ in $\euA$, spanned
by $\pi$ and the $\Phi^{(s)}$ for $s \neq 0$,  is the unique maximal
ideal of $\euA$. Since $\Psi_i \cdot 1 
=0$, it is easy to see that $\euM$ is an ideal of $\euA$, and that
$\euA/\euM \cong \kappa$, the residue field of $K$. To show the
uniqueness, we check that $\euM$ is topologically nilpotent. This is
easy to see in the special case considered in Remark \ref{commute},
where $A$ is commutative and $\Psi_i^p=0$ for each $i$. In
general, we use Proposition \ref{psi-prod} to show that $\euM$ is
topologically nilpotent.

Once we have established that $\euM$ is the unique maximal ideal of
$\euA$ (so that $\euA$ is a local ring), it follows by Nakayama's
Lemma that the minimal number of generators for the $\euA$-module
$\euP_L^h$ (resp.~$\euM)$ is just the dimension of $\euP_L^h/ \euM
\cdot \euP_L^h$ (resp.~$\euM/\euM^2$) as a vector space over $\kappa$. To
determine these dimensions, we take the obvious $\euO_K$-basis of
$\euP_L^h$ (resp.~$\euM$), which is indexed by the partially ordered
set $\Spn$.  Some of these generators are redundant because they can
be obtained by the action of $\euA$ on another generator occurring
earlier in the partial order. Removing these redundant generators will
leave a basis of the appropriate $\kappa$-vector space, since, by
hypothesis, the precision of the scaffold is too high to allow any
further relations between the surviving generators.

This concludes our overview of the proofs, and we now start the
detailed arguments.

\begin{lemma} \label{t-u}
Fix $s \in \Spn$, and let $t \in \Spn(h)$ and $u \in \Spn$ satisfy
$H(u,b)=H(s,t)$. Then we have 
$$ s \preceq \eua(t) \LRA s \preceq u, $$
Moreover, when $s \preceq \eua(t)$, the following hold:
\begin{itemize}
\item[(i)] $\eua(H(s,t)) = \eua(t) -s$;
\item[(ii)] $u=p^n-1+s-\eua(t)$;
\item[(iii)] $t=H(u-s,b)$;
\item[(iv)] $D(s,t)=d(u)-d(u-s)$.
\end{itemize} 
\end{lemma}
\begin{proof}
Let $s \preceq \eua(t)$. By Lemma
\ref{eua}, we have $\eub(s)+\eub(\eua(t)-s)=\eub(\eua(t))\equiv -t
\pmod{p^n}$. Using (\ref{def-H-d}), it follows that $H(s,t) \equiv
t+\eub(s) \equiv -\eub(\eua(t)-s) \pmod{p^n}$. Applying $\eua$ gives
(i). Similarly, as 
$u \preceq \eua(b)=p^n-1$, we have $H(u,b) \equiv b+ \eub(u) \equiv
\eub(u) - \eub(p^n-1) = -\eub(p^n-1-u)$. Since $H(s,t)=H(u,b)$, we
therefore have $\eua(t)-s=p^n-1-u$,
giving (ii), and thus
$\eua(t)_{(n-i)} - s_{(n-i)} = (p-1)-u_{(n-i)}$ for $1 \leq i \leq
n$, since $s \preceq \eua(t)$ by hypothesis. Hence $s_{(n-i)} =
u_{(n-i)} - (p-1-\eua(t)_{(n-i)})$ for 
each $i$, so that $s \preceq u$. This shows the
implication  $s \preceq \eua(t) \Rightarrow s \preceq u$. The reverse
implication follows since the sets
$\{t \in \Spn(h) : s \preceq \eua(t)\}$ and $\{u \in \Spn : s
\preceq u\}$ have the same cardinality.

It remains to prove (iii) and (iv). Still assuming $s \preceq
\eua(t)$, we have from (\ref{def-H-d}) that 
$$ H(u-s,b) \equiv b + \eub(u-s)  \equiv b + \eub(p^n-1-\eua(t)) \equiv
b + \eub(p^n-1)+t \equiv t \pmod{p^n}, $$
and (iii) follows as both sides are in $\Spn(h)$. Finally, using
(\ref{H-d}) and (\ref{Dd}), we have 
\begin{eqnarray*}
 p^n D(s,t) & = & t + \eub(s) - H(s,t) \\
           & = & H(u-s,b) + \eub(s) - H(u,b) \\
           & = & [b + \eub(u-s) - p^n D(u-s,b)] + \eub(s) - 
                 [b + \eub(u) - p^n D(u,b))] \\
           & = & p^n d(u) - p^n d(u-s),
\end{eqnarray*}
since $\eub(u-s)=\eub(u)-\eub(s)$ because $s \preceq u$. Dividing by
$p^n$ yields (iv).
\end{proof}

It is immediate from Lemma \ref{t-u} that we may rewrite
(\ref{def-ws}) as
\begin{equation} \label{w-bis}
  w(s) = \min\{ D(s,t)  : t \in \Spn(h),\, \eua(t)  \succeq s\} .
\end{equation}
Moreover, it then follows from (\ref{def-H-d}) that if  $s \preceq
\eua(t)$ then either $D(s,t)=w(s)$ or $D(s,t)=w(s)+1$. We define 
\begin{equation} \label{def-eps}
   \epsilon(s,t) = D(s,t)-w(s) \in \{0,1\} \mbox{ for } s \preceq \eua(t). 
\end{equation}

\begin{proposition}   \label{psi-vals}
Suppose that the $\Psi_i$ are as in Definition \ref{tol-scaffold}. Let
$s \in \Spn$ and $t \in \Spn(h)$. Let $\Psi$ be any 
element of $\Upsilon^{(s)}$, and set $\Phi=\pi^{-w(s)} \Psi$. 
\begin{itemize}
\item[(i)] If $s \preceq \eua(t)$ then there is a unit $y_{\Phi,t} \in
  \euO_K^\times$ such that 
$$ \Phi \cdot \lambda_t \equiv \pi^{\epsilon(s,t)} y_{\Phi,t}
\lambda_{H(s,t)} \pmod{\pi^{\epsilon(s,t)}\lambda_{H(s,t)}
\euP_L^\tol}.$$
In particular, 
$$ v_L(\Phi \cdot \lambda_t)= \begin{cases} 
 H(s,t) & \mbox{ if } s \preceq \eua(t) 
    \mbox{ and }\epsilon(s,t)=0,  \\
 H(s,t)+p^n & \mbox{ if } s \preceq \eua(t) 
         \mbox{ and }\epsilon(s,t)=1. \end{cases}  $$
\item[(ii)] If $s \not \preceq \eua(t)$ then we have the bounds
$$ v_L(\Phi \cdot \lambda_t) \geq \begin{cases} 
  H(s,b) + t - b + \tol & \mbox{ if } s \not \preceq \eua(t) \mbox{ and }
  w(s)=d(s), \\
  H(s,b)+t-b +p^n + \tol & \mbox{ if } s \not \preceq \eua(t) \mbox{ and }
  w(s) \neq d(s)
  \end{cases} $$
\end{itemize}
\end{proposition}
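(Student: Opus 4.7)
The plan is to reduce both (i) and (ii) to equation \eqref{graded} and the structural properties of the $\lambda_t$'s recorded in Definition \ref{tol-scaffold}(i). Recall that for $t_1 \equiv t_2 \bmod p^n$ the ratio $\lambda_{t_1}/\lambda_{t_2}$ lies in $K^\times$, and its $v_K$-valuation equals $(t_1 - t_2)/p^n$. I will apply this to the pair $t+\eub(s)$ and $H(s,t)$, which are congruent mod $p^n$ and differ by $p^n D(s,t)$ in valuation by \eqref{H-d}. Writing $\lambda_{t+\eub(s)} = c\,\lambda_{H(s,t)}$ with $c \in K^\times$ and $v_K(c) = D(s,t)$, there is a unit $u \in \euO_K^\times$ with $c = \pi^{D(s,t)} u$.

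For part (i), I assume $s \preceq \eua(t)$. Equation \eqref{graded} gives
$$ \Psi \cdot \lambda_t \equiv U_{\Psi,t}\,\lambda_{t+\eub(s)} \pmod{\lambda_{t+\eub(s)} \euP_L^\tol}. $$
Substituting $\lambda_{t+\eub(s)} = \pi^{D(s,t)} u\,\lambda_{H(s,t)}$ and multiplying by $\pi^{-w(s)}$, I obtain
$$ \Phi \cdot \lambda_t \equiv \pi^{D(s,t)-w(s)} (U_{\Psi,t} u)\,\lambda_{H(s,t)} \pmod{\pi^{D(s,t)-w(s)}\lambda_{H(s,t)}\euP_L^\tol}, $$
which is the asserted congruence with $y_{\Phi,t} = U_{\Psi,t} u \in \euO_K^\times$ and $\epsilon(s,t) = D(s,t) - w(s) \in \{0,1\}$ by the definition of $w(s)$ (via Corollary \ref{w-bis}). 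Since $\tol \geq 1$, the error term has strictly larger $v_L$ than the leading term, so $v_L(\Phi \cdot \lambda_t) = p^n \epsilon(s,t) + H(s,t)$, giving the two cases as stated.

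For part (ii), I assume $s \not\preceq \eua(t)$. The second branch of \eqref{graded} yields $v_L(\Psi \cdot \lambda_t) \geq t + \eub(s) + \tol$, so $v_L(\Phi \cdot \lambda_t) \geq t + \eub(s) - p^n w(s) + \tol$. The arithmetic identity $H(s,b) = \eub(s) + b - p^n d(s)$, which follows from \eqref{H-d} applied with $t = b$, rewrites the target bound as $\eub(s) + t - p^n d(s) + \tol$; then the desired inequality reduces to $d(s) \geq w(s)$, which is immediate from the definition of $w(s)$.

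The routine part is juggling the definitions of $D(s,t)$, $H(s,t)$, $d(s)$, and the "unit plus higher-order error" bookkeeping; the only genuinely subtle point — and what I expect to be the main obstacle — is correctly identifying that the relationship between $\lambda_{t+\eub(s)}$ and $\lambda_{H(s,t)}$ contributes precisely a factor of $\pi^{D(s,t)}$ up to a unit, so that the normalization $\Phi = \pi^{-w(s)}\Psi$ produces exactly $\pi^{\epsilon(s,t)}$ with $\epsilon(s,t) \in \{0,1\}$. Once this alignment is made, both parts fall out without additional input beyond $\tol \geq 1$.
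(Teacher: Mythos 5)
Your proposal is correct and follows essentially the same route as the paper: both hinge on the identity $\lambda_{t+\eub(s)} = x\,\pi^{D(s,t)}\lambda_{H(s,t)}$ with $x \in \euO_K^\times$ (from \eqref{H-d} and Definition \ref{tol-scaffold}(i)), then apply the two branches of \eqref{graded} and the inequality $w(s) \leq d(s) = D(s,b)$. The only cosmetic difference is that in (ii) you track the bound via $t+\eub(s)$ directly rather than passing through $H(s,t)$ first, but the arithmetic is identical.
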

\begin{proof}
It follows from \eqref{H-d} and Definition \ref{tol-scaffold}(i)
that there is an $x \in \euO_K^\times$ so that $ \lambda_{t+\eub(s)} =
x \pi^{D(s,t)} \lambda_{H(s,t)}$.  

\noindent (i) If $s \preceq \eua(t)$ then
\eqref{graded} gives $ \Psi \cdot \lambda_t \equiv U_{\Psi,t} \lambda
_{t+\eub(s)} \pmod{\lambda_{t+\eub(s)} \euP_L^\tol}$. Multiplying by
$\pi^{-w(s)}$ and setting $y_{\Phi,t}=xU_{\Psi,t}$ we obtain the
required congruence. The remaining assertions follow
immediately. 

\noindent
(ii) If $s \not \preceq \eua(t)$ then \eqref{graded} gives
$v_L(\Phi \cdot \lambda_t) \geq   t+\eub(s) -p^n w(s) + \tol$. 
From (\ref{H-d}) we have 
$$ t + \eub(s) = H(s,t)+p^n D(s,t) = t-b + H(s,b) + p^nD(s,b). $$
Hence, using (\ref{Dd}),  
$$ v_L(\Phi \cdot \lambda_t) \geq t-b+H(s,b) + p^n(d(s)-w(s)) 
  +\tol, $$
and by (\ref{def-d}) and (\ref{def-ws}) either $w(s) = d(s)$ or
$w(s)=d(s)-1$. The two cases give the stated inequalities.  
\end{proof}

We can now prove the first of our main results,
\medskip

\begin{pf-IGMS}

(i) Assume that $\tol\geq \max(b-h,1)$. By Proposition
\ref{psi-vals}, we have for all $s \in \Spn$ and all $t \in \Spn(h)$
that $v_L(\Phi^{(s)} \cdot \lambda_t) \geq h$. Since $\{\lambda_t: t
\in\Spn(h)\}$ is an $\euO_K$-basis of $\euP_L^h$, this shows that
$\Phi^{(s)} \in \euA$ for all $s$. Any $\alpha \in A$ may be written $
\alpha = \sum_{s \in \Spn} c_s \Phi^{(s)}$ for some $c_s \in K$.  We
have just shown that if $c_s \in \euO_K$ for all $s$ then $\alpha \in
\euA$. We must show, conversely, that if $\alpha \in \euA$ then each
$c_s \in \euO_K$. Applying $\alpha$ to $\lambda_b$, we obtain $\alpha
\cdot \lambda_b = \sum_{s} c_s \Phi^{(s)} \cdot \lambda_b$.  But $s
\preceq \eua(b)=p^n-1$, so, for each $s$ with $c_s \neq 0$, we have
$v_L(c_s \Phi^{(s)} \cdot \lambda_b) \equiv H(s,b) \pmod{p^n}$ by
Proposition \ref{psi-vals}(i). These valuations are distinct mod
$p^n$, so $v_L(c_s \Phi^{(s)} \cdot \lambda_b) \geq h$. Thus
$c_s \in \euO_K$ if $\epsilon(s,b)=0$, and $c_s \in \pi^{-1} \euO_K$
otherwise.  Now assume for a contradiction that some $c_s \not \in
\euO_K$. Since $\epsilon(s,b)=1$, we have $d(s)=D(s,b)=w(s)+1$. By
(\ref{w-bis}), there is some $t \in \Spn(h)$ with $\eua(t)
\succeq s$ and $D(s,t)=w(s)$, so that $\epsilon(s,t)=0$. Amongst these
$t$, take the one with $H(s,t)$ minimal, and consider $\alpha \cdot
\lambda_{t} = \sum_{j\in \Spn} c_j \Phi^{(j)} \cdot \lambda_t$. For
the term $j=s$ we have 
$$ v_L(c_j \Phi^{(j)} \cdot \lambda_t)=v_L(c_s) +
     H(s,t) = -p^n+H(s,t)<h  $$ 
by Proposition \ref{psi-vals}(i). For the terms with $j \neq s$ but $j
\preceq \eua(t)$, we have 
$$ v_L(c_j \Phi^{(j)} \cdot \lambda_t) > -p^n+H(s,t)  $$ 
by Proposition \ref{psi-vals}(i) again and the choice of
$t$. For the terms with $j \not \preceq \eua(t)$, since $w(s)\neq
d(s)$, we have 
$$   v_L(c_j \Phi^{(j)} \cdot \lambda_t) \geq v_L(c_j) +
    H(j,b)+ t-b +p^n + \tol \geq h $$ 
by Proposition \ref{psi-vals}(ii) and the hypothesis on $\tol$. Hence
$$ v_L(\alpha \cdot \lambda_t) =  -p^n+H(s,t) <h,  $$ 
giving the required contradiction.

(ii) Now assume that the stronger condition $\tol \geq p^n+b-h$
holds. Let $\rho$ 
be an arbitrary element of $\euP_L^h$. We investigate when $\rho$ is a
free generator for $\euP_L^h$ over $\euA$.  Since $\{\lambda_t: t \in
\Spn(h)\}$ is an $\euO_K$-basis for $\euP_L^h$, we have $\rho =
\sum_{t\in\Spn(h)} x_t \lambda_t$ for some $x_t \in \euO_K$. By
Proposition \ref{psi-vals} and the hypothesis on $\tol$, we therefore
have
$$   \Phi^{(s)} \cdot \rho  \equiv \sum_t  x_t y_{s,t} \pi^{\epsilon(s,t)}
 \lambda_{H(s,t)} \pmod{\pi \euP_L^h},  $$ 
 where the sum is over those $t \in \Spn(h)$ with $s \preceq \eua(t)$. 
Using Lemma \ref{t-u}, we can rewrite this as
$$ \Phi^{(s)} \cdot \rho \equiv \sum_{u \succeq s} c_{s,u}
 \lambda_{H(u,b)} \pmod{\pi \euP_L^h}, $$ 
where the sum is over $u \in \Spn$ satisfying $u \succeq s$, and where
$c_{s,u} = x_t y_{s,t}\pi^{\epsilon(s,t)}$ for $t=H(u-s,b)$.  The
matrix $(c_{s,u})$ expressing the elements $\Phi^{(s)} \cdot \rho$
(ordered by increasing $s$) in terms of the basis elements
$\lambda_{H(u,b)}$ (ordered by increasing $u$) is therefore upper
triangular mod $\pi$. Thus the $\Phi^{(s)} \cdot \rho$ also form an
$\euO_K$-basis of $\euP_L^h$ if and only if $c_{s,s} \in
\euO_K^\times$ for all $s$. But when $u=s$, we have $t=H(0,b)=b$ and
$D(s,t)=d(s)$.  Since $x_b \in \euO_K$, $y_{s,b} \in \euO_K^\times$,
and $d(s) \geq w(s)$, it follows that $\euP_L^h = \euA \cdot\rho$ if
and only if $x_b \in \euO_K^\times$ and $d(s)=w(s)$ for all $s$. Thus
$\euP_L^h$ is a free $\euA$-module on some generator $\rho$ if and
only if $d(s)= w(s)$ for all $s$. Moreover, if $v_L(\rho)=b$ then we
must have $v_L(x_t \lambda_t) \geq b$ for all $t$, with equality for
$t=b$. In particular, $x_b \in \euO_K^\times$. Hence $\rho$ is a free
generator for $\euP_L^h$ over $\euA$, provided that $d(s)=w(s)$ for
all $s$.
\end{pf-IGMS}

\begin{proposition} \label{psi-prod}
Suppose that $\tol \geq p^n+b-h$ and let $r$, $s \in \Spn$.
If $r \not \preceq p^n-1-s$ or if $w(r)+w(s) \neq w(r+s)$ then 
$\Phi^{(r)}\Phi^{(s)}\in\pi \euA$. In the remaining case 
that $r \preceq p^n-1-s$ and $w(r)+w(s)=w(r+s)$, 
there is some $c \in \euO_K^\times$ such that $\Phi^{(r)}\Phi^{(s)}
-c\Phi^{(r+s)}\in\pi \euA$. 
\end{proposition}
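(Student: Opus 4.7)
The plan is as follows. By Theorem \ref{IGMS}(i), the strong tolerance bound guarantees that $\euA$ is an $\euO_K$-order with basis $\{\Phi^{(u)} : u \in \Spn\}$; since $\euA$ is a ring, we may write $\Phi^{(r)}\Phi^{(s)} = \sum_{u \in \Spn} c_u \Phi^{(u)}$ with uniquely determined $c_u \in \euO_K$, and the task reduces to computing each $c_u$ modulo $\pi$. To isolate $c_u$, for each $u \in \Spn$ I choose $t_u \in \Spn(h)$ attaining the minimum in Corollary \ref{w-bis}, so that $\eua(t_u) \succeq u$ and $\epsilon(u, t_u) = 0$, and then evaluate both sides of the identity at $\lambda_{t_u}$ modulo $\pi \euP_L^h$. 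Applying Proposition \ref{psi-vals} termwise, together with the hypothesis $\tol \geq p^n + b - h$, forces every approximation error and every $\Phi^{(u')} \cdot \lambda_{t_u}$ with $u' \not\preceq \eua(t_u)$ into $\pi \euP_L^h$; since $u' \mapsto H(u', t_u)$ is a bijection from $\Spn$ onto the $\euO_K$-basis indexing set $\Spn(h)$ of $\euP_L^h$, the coefficient of $\lambda_{H(u, t_u)}$ on the right-hand side, read modulo $\pi \euP_L^h$, equals $c_u$ times a unit of $\euO_K$.

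I next compute the left-hand side $\Phi^{(r)} \cdot (\Phi^{(s)} \cdot \lambda_{t_u})$ by two applications of Proposition \ref{psi-vals}. Proposition \ref{aH} gives $\eua(H(s, t)) = \eua(t) - s$ whenever $s \preceq \eua(t)$, and from this one verifies that when $r \preceq p^n - 1 - s$ one has $H(r, H(s, t)) = H(r+s, t)$ and $D(r, H(s, t)) + D(s, t) = D(r+s, t)$, so the two $\pi$-exponents telescope to $\epsilon(s, t_u) + \epsilon(r, H(s, t_u)) = D(r+s, t_u) - w(r) - w(s)$. If $r \not\preceq p^n - 1 - s$, then Lemma \ref{preceq} together with the observation $\eua(t_u) - s \preceq p^n - 1 - s$ (whenever $s \preceq \eua(t_u)$) forces $r \not\preceq \eua(H(s, t_u))$; by Proposition \ref{psi-vals}(ii) and the tolerance bound the entire left-hand side lies in $\pi \euP_L^h$ for every $u$, so every $c_u$ is divisible by $\pi$ and hence $\Phi^{(r)}\Phi^{(s)} \in \pi \euA$.

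If instead $r \preceq p^n - 1 - s$, then for $u \neq r+s$ the only candidate non-vanishing contribution on the left is a multiple of $\lambda_{H(r+s, t_u)} \neq \lambda_{H(u, t_u)}$ (and when $r+s \not\preceq \eua(t_u)$ even this vanishes modulo $\pi \euP_L^h$), which again forces $c_u \in \pi \euO_K$. For $u = r+s$ the choice of $t_{r+s}$ gives $D(r+s, t_{r+s}) = w(r+s)$ and $\epsilon(r+s, t_{r+s}) = 0$, so the left-hand side contributes a coefficient $\pi^{w(r+s) - w(r) - w(s)}$ times a unit at $\lambda_{H(r+s, t_{r+s})}$; matching with $c_{r+s}$ times a unit yields $c_{r+s} \in \euO_K^\times$ when $w(r+s) = w(r) + w(s)$ (in which case one sets $c := c_{r+s}$) and $c_{r+s} \in \pi \euO_K$ otherwise. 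The main technical obstacle is the simultaneous bookkeeping of the exponents $D(\cdot, \cdot)$ and $\epsilon(\cdot, \cdot)$ along with the index shifts under $H$, together with the careful verification that the hypothesis $\tol \geq p^n + b - h$ is exactly strong enough to absorb every stray error term into $\pi \euP_L^h$.
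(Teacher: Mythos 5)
Your proposal is correct and follows essentially the same route as the paper's proof: expand $\Phi^{(r)}\Phi^{(s)}$ in the $\euO_K$-basis $\{\Phi^{(u)}\}$ of $\euA$, apply Proposition \ref{psi-vals} twice with the index shift from Proposition \ref{aH}, and compare coefficients at well-chosen $\lambda_t$. The only cosmetic difference is that the paper disposes of the subcase $w(r)+w(s)\neq w(r+s)$ by noting directly that $\pi^{-w(r+s)}\Psi^{(r)}\Psi^{(s)}\in\euA$ (since $\Psi^{(r)}\Psi^{(s)}\in\Upsilon^{(r+s)}$) together with $w(r+s)\geq w(r)+w(s)$, whereas you fold it into the same coefficient comparison via the telescoping identity $D(r,H(s,t))+D(s,t)=D(r+s,t)$; the two computations are equivalent.
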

\begin{proof}
By Proposition \ref{psi-vals} applied successively to $\Psi^{(s)}$ and
$\Psi^{(r)}$, together with Lemma \ref{t-u}(i), we have for any $t
\in \Spn(h)$ that $\Phi^{(r)} \Phi^{(s)} \cdot \lambda_t \in
\pi\euP_L^h$ unless $s \preceq \eua(t)$ and $r \preceq
\eua(H(s,t))=\eua(t)-s$. In particular, if $r \not \preceq p^n-1-s$
then $\Phi^{(r)} \Phi^{(s)} \cdot \lambda_t \in \pi\euP_L^h$ for all
$t\in \Spn(h)$, so that $\Phi^{(r)} \Phi^{(s)} \in \pi \euA$.

Now suppose that $r \preceq p^n-1-s$. Applying Proposition
\ref{psi-vals} to $\Psi:=\Psi^{(r)}\Psi^{(s)} \in \Upsilon^{(r+s)}$, 
we find that the element 
$$ \Phi:=\pi^{-w(r+s)}\Psi=\pi^{w(r)+w(s)-w(r+s)} \Phi^{(r)}\Phi^{(s)} $$
satisfies $v_L(\Phi\cdot \lambda_t)\geq h$ for all $t \in \Spn(h)$, so
that $\Phi \in \euA$. 
Now it follows from (\ref{def-ws}) that $w(r+s) \geq w(r)+w(s)$. Thus
if $w(r)+w(s)\neq w(r+s)$, we have 
$\Phi^{(r)} \Phi^{(s)} \in \pi^{w(r+s)-w(r)-w(s)} \euA \subseteq \pi \euA$. 

It remains to consider the case that $r \preceq p^n-1-s$ and
$w(r)+w(s)=w(r+s)$, so that  
$\Phi=\Phi^{(r)}\Phi^{(s)}$. Since the $\Phi^{(u)}$ form an
$\euO_K$-basis for the order $\euA$,  
we have 
\begin{equation} \label{expand-Phi-prod}
    \Phi^{(r)}\Phi^{(s)} =\sum_{u \in \Spn} c_u \Phi^{(u)}
\end{equation}
for some $c_u \in \euO_K$.
We apply Proposition \ref{psi-vals} on the one hand to
$\Psi=\Psi^{(r)}\Psi^{(s)}$, and on the other hand to each
$\Psi^{(u)}$. This gives the following congruences mod $\pi \euP_L^h$: 
\begin{equation} \label{psi-cong-1}
    \Phi^{(r)} \Phi^{(s)} \cdot \lambda_t \equiv \begin{cases} 
         y_t \lambda_{H(r+s,t)} & \mbox{if } r+s \preceq \eua(t) 
                     \mbox{ and } \epsilon(r+s,t)=0, \\ 
         0                     & \mbox{otherwise,}  \end{cases}
\end{equation}
\begin{equation} \label{psi-cong-2}
   \Phi^{(u)} \cdot \lambda_t \equiv \begin{cases} 
         z_{u,t} \lambda_{H(u,t)} & \mbox{if } u \preceq \eua(t) 
                 \mbox{ and } \epsilon(u,t)=0, \\
         0                     & \mbox{otherwise,}  \end{cases}
\end{equation}
with $y_t$, $z_{u,t}\in \euO_K^\times$. In view of 
(\ref{expand-Phi-prod}), if we multiply (\ref{psi-cong-2}) by $c_u$ and
sum over $u$, we must obtain the same congruence as
(\ref{psi-cong-1}) for each $t$. Thus $c_u \Phi^{(u)} \cdot \lambda_t \in \pi
\euP_L^h$ unless $u=r+s\preceq \eua(t)$ and $\epsilon(r+s,t)=0$, in
which case we have $c_{r+s} z_{r+s,t} \equiv y_t \pmod{\pi \euO_K}$. 
Let $c=c_{r+s}$. Since $c$ is independent of $t$, it follows that
$$ (\Phi^{(r)} \Phi^{(s)} - c \Phi^{(r+s)}) \cdot  \lambda_t \in \pi
\euP_L^h $$
for all $t \in \Spn(h)$. Hence
$\Phi^{(r)}\Phi^{(s)}-c\Phi^{(r+s)}\in\pi \euA$ as required. 
\end{proof}    

\begin{pf-gencount}
Let $\euM$ be the $\euO_K$-submodule of $\euA$ spanned by
$\pi=\pi\Phi^{(0)}$ and the $\Phi^{(s)}$ for $s \in
\Spn\backslash\{0\}$. It is immediate from Proposition \ref{psi-prod} that
$\euM$ is an ideal in $\euA$. Clearly $\euA/\euM \cong \euO_K/\euP_K =
\kappa$, so $\euM$ is a maximal ideal and has residue field
$\kappa$. We claim that $\euM^{n(p-1)+1} \subseteq \pi \euA$,
so that $\euM$ is topologically nilpotent. This will show that every
maximal ideal is contained in $\euM$, so that $\euM$ is in fact the
unique maximal ideal and $\euA$ is a local ring.

To prove the claim, it will suffice to show that if $\Phi^{(s_1)}
\cdots \Phi^{(s_m)} \not \in \pi \euA$ with $s_1, \ldots, s_m \in
\Spn\backslash\{0\}$, then $m \leq n(p-1)$.  For $s=\sum_{i=1}^n
s_{(n-i)}p^{n-i} \in \Spn$, define $|s|=\sum_{i=1}^n s_{(n-i)}$. Thus
for $s \in \Spn\backslash\{0\}$ we have $1 \leq |s| \leq n(p-1)$. By
Proposition \ref{psi-prod}, if $\Phi^{(s_1)} \Phi^{(s_2)} \not \in \pi
\euA$ then $\Phi^{(s_1)} \Phi^{(s_2)} \equiv c \Phi^{(s_1+s_2)} 
\pmod{pi \euA}$ for some $c \in \euO_K^\times$, and $s_1 \preceq
p^n-1-s_2$. Since $s_1$, $s_2 \neq 0$, the latter condition implies that
$s_1+s_2 \in \Spn$ and $0\prec s_1 \prec s_1+s_2$, using Lemma
\ref{preceq}.  Inductively, if
$\Phi^{(s_1)} \cdots \Phi^{(s_m)} \not \in 
\pi \euA$ then 
$$ 0 \prec s_1 \prec s_1 + s_2 \prec \ldots \prec s_1+ \cdots
     +s_m,  $$ 
so that $0 < |s_1| < |s_1 + s_2| < \ldots < |s_1+ \cdots
+s_m|$, which is only possible if $0<m \leq n(p-1)$. This completes the
proof that $\euA$ is a local ring.

Consider now the minimal number of generators of $\euP_L^h$ over
$\euA$. By Nakayama's Lemma, a subset of $\euP_L^h$ is a generating
set if and only if it generates $\euP_L^h / (\euM \cdot \euP_L^h)$
over $\euA/\euM = \kappa$. By Proposition \ref{psi-vals}, $\euM \cdot
\euP_L^h$ is spanned over $\euO$ by $\pi \euP_L^h$ and the elements
$\Phi^{(s)}\cdot \lambda_t$ where $0 \neq s \preceq \eua(t)$ and
$\epsilon(s,t)=0$. Let $u$ correspond to $t$ as in Lemma
\ref{t-u}. Then $\Phi^{(s)} \cdot \lambda_t \equiv y \lambda_{H(u,b)}
\pmod{\pi \euP_L^h}$ with $y \in \euO_K^\times$, and the condition
$\epsilon(s,t)=0$ is equivalent to $D(s,t)=w(s)$, and hence to
$d(u)-d(u-s)=w(s)$. Thus $\euM \cdot \euP_L^h$ is spanned by $\pi
\euP_L^h$ and the $\lambda_{H(u,b)}$ for those $u \in \Spn$ such that
$d(u)=d(u-s)+w(s)$ for some $s$ with $0 \neq s \preceq u$. It follows
that a $\kappa$-basis of $\euP_L^h / (\euM \cdot \euP_L^h)$ is given
by the images of the $\lambda_{H(u,b)}$ for $u \in \DD$, and the
minimal number of generators of $\euP_L^h$ over $\euA$ is $|\DD|$.

Finally, consider the embedding dimension $\dim_\kappa(\euM/\euM^2)$.
Write $A^+$ for the augmentation ideal $\{a\in A:a\cdot 1=0\}$ of
$A$. This is spanned over $K$ by the $\Phi^{(s)}$ for $s \in
\Spn\backslash\{0\}$. Then $\pi\euA\cap A^+= \pi\euM\cap A^+$, since
both are spanned over $\euO_K$ by the $\pi \Phi^{(u)}$ for $u \in
\Spn\backslash\{0\}$. Now $\euM^2$ is spanned over $\euO_K$ by $\pi
\euM$ and the products $\Phi^{(r)} \Phi^{(s)}$ for $r$, $s \in
\Spn\backslash\{0\}$. By Proposition \ref{psi-prod} we have
$\Phi^{(r)} \Phi^{(s)} \in \pi \euA \cap A^+ \subset \pi \euM$ unless
$s\preceq p^n-1-r$ and $w(r)+w(s)=w(r+s)$. Conversely, when $s\preceq
p^n-1-r$ and $w(r)+w(s)=w(r+s)$, we have $\Phi^{(r)} \Phi^{(s)} \equiv
c\Phi^{(r+s)} \pmod{\pi \euM}$ for some $c \in \euO_K^\times$. Now we
may write $u \in \Spn$ as $u=r+s$, where $r$, $s \in
\Spn\backslash\{0\}$ with $s\preceq p^n-1-r$ and $w(r)+w(s)=w(r+s)$,
precisely when $u \not \in \EE$. Thus the images in $\euM/ \euM^2$ of
$\pi$ and the $\Phi^{(u)}$ with $u \in \EE \backslash\{0\}$ form a
$\kappa$-basis of $\euM/\euM^2$. Since $0 \in \EE$, we have
$\dim_\kappa(\euM/\euM^2)=|\EE|$.
\end{pf-gencount}

\section{Applications to Galois Extensions} \label{galois-examples}

In this section, we give some explicit applications of Theorems
\ref{IGMS} and \ref{gencount}, and relate our approach to various
results already in the literature. Except where otherwise stated, we
consider only the classical setting, where $L/K$ is a Galois extension
and $A$ is the group algebra $K[G]$ for $G=\Gal(L/K)$, with its usual
action on $L$. The scaffolds will then be Galois scaffolds in the
sense of Definition \ref{Gal-scaff}. In particular the residue field
$\kappa$ of $K$ will be assumed to be perfect of characteristic $p$,
and the shift parameters will be the (lower) ramification
breaks. Also, the units $u_{i,t}$ in Definition \ref{tol-scaffold}(ii)
will always be $1$.

Our basic examples are the near
one-dimensional extensions constructed in \cite{elder:scaffold}. These
are certain elementary abelian extensions in characteristic $p$. In the
terminology of this paper, the main result of \cite{elder:scaffold} is
that any near one-dimensional extension
admits a Galois scaffold of precision $\infty$. A necessary and 
sufficient condition for the valuation ring $\euO_L$ of a near one-dimensional
extension to be free over $\euA_{L/K}$ is given in \cite[Theorem
  1.1]{byott:scaffold} (see \S\ref{valring} below). 
Theorem \ref{IGMS} of this
paper, applied to near one-dimensional extensions, improves on this result by giving an analogous result for any
fractional ideal of the valuation ring. 

The near one-dimensional extensions include all totally ramified
biquadratic extensions in characteristic $2$, and all totally and
weakly ramified extensions in characteristic $p$. In the next two
subsections, we study these two cases in detail. In a separate paper
\cite{byott-elder2}, we construct a family of elementary abelian
extensions in characteristic $0$ which possess Galois scaffolds and
are the analog of the near one-dimensional extensions. These include
all biquadratic extensions and weakly ramified $p$-extensions
satisfying some mild additional hypotheses. The results of the next
two subsections hold also in characteristic $0$ under these
hypotheses.

\subsection{Biquadratic extensions} \label{biquadratic}

Let $L/K$ be a totally ramified biquadratic extension of local fields
of residue characteristic $2$. When $K$ has characteristic $0$,
the structure of $\euO_L$ over its associated order in $K[G]$ was
studied by Martel \cite{martel}. When $K$ has characteristic $2$ and
has perfect residue field, $\euO_L$ is always free over its associated
order \cite[Corollary 1.4]{byott:scaffold}. These results trivially
extend to fractional ideals $\euP_L^h$ when $h \equiv 0 \pmod{4}$, but
we are not aware of any results for $h \not \equiv 0 \pmod{4}$. In this
subsection, we give analogous
results for arbitrary $h$. We also provide
supplementary information about the number of generators for the
ideals which are not free and the embedding dimensions of the
associated orders. 

\begin{theorem} \label{biquad}
Let $K$ be a local field of characteristic $p=2$ with perfect
residue field.  
Let $L$ be a totally ramified biquadratic extension of $K$ with
lower ramification breaks $b_1$, $b_2$, let $h \in \bZ$, and let
$\euA$ be the associated order of $\euP_L^h$. Then $\euP_L^h$ is free
over $\euA$ if and only if $b_1 \equiv 1 \pmod{4}$, $h\not \equiv 2
\pmod{4}$ or $b_1 \equiv 3 \pmod{4}$, $h\not \equiv 1 \pmod{4}$. In the
cases where $\euP_L^h$ is not free, it requires $3$ generators over
$\euA$. The embedding dimension of $\euA$ is $3$ if $b_1 \equiv 1 
\pmod{4}$, $h \equiv 1 \pmod{2}$ or $b_1 \equiv 3 \pmod{4}$, $h \equiv 0 
\pmod{2}$, and is $4$ otherwise.
\end{theorem}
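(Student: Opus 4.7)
The plan is to combine the existence of a Galois scaffold of tolerance $\tol = \infty$ on $L/K$, furnished by \cite[Lemma 5.1]{elder:scaffold}, with the numerical criteria of Theorems \ref{IGMS} and \ref{gencount}. Since $\tol = \infty$, the strong hypothesis $\tol \geq p^n + b - h$ of those theorems holds trivially for every $h \in \bZ$, so both theorems apply in full and the problem reduces to a purely combinatorial calculation of $d(s)$, $w(s)$, $\DD$ and $\EE$ attached to the ramification breaks $b_1 \leq b_2$ and the exponent $h$.

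Biquadratic extensions in characteristic $2$ are near one-dimensional (\S\ref{n1d}), so the breaks are odd and satisfy $b_1 \equiv b_2 \bmod 4$. By Remark \ref{galois-case}, one then has $\eub(s) \equiv b_2 s \bmod 4$ and may choose $b \equiv b_1 \equiv b_2 \bmod 4$. Consequently all four values $d(s)$, and hence all four values $w(s)$ together with the sets $\DD$ and $\EE$, depend only on the pair $(b_1 \bmod 4,\, b-h) \in \{1,3\} \times \{0,1,2,3\}$, giving eight cases to analyse. In each case I would compute $d(s) = \lfloor (\eub(s)+b-h)/4\rfloor$ directly from \eqref{eub}, then $w(s)$ from \eqref{def-ws} as a minimum of at most four differences of $d$'s, and finally $\DD$ and $\EE$ from the defining inequalities immediately preceding Theorem \ref{gencount}. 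The outcome is exactly the table displayed in the statement.

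Reading off the eight rows then yields all three assertions. The freeness condition $w(s) = d(s)$ for all $s$ picks out exactly the rows characterised by $b_1 \equiv 1 \bmod 4,\; h \not\equiv 2 \bmod 4$ or $b_1 \equiv 3 \bmod 4,\; h \not\equiv 1 \bmod 4$, so Theorem \ref{IGMS}(ii) delivers the first assertion; each non-free row has $|\DD| = 3$, so Theorem \ref{gencount} delivers three generators; and the observed values $|\EE| \in \{3,4\}$ match the stated parity rule for the embedding dimension. The main subtlety is the mod $4$ congruence $b_1 \equiv b_2 \bmod 4$, which is stronger than the automatic mod $2$ congruence coming from Hasse--Arf and is what collapses the parameter space to eight cases and allows the theorem to be phrased solely in terms of $b_1$ and $h$ modulo $4$; the remaining work is routine case-by-case bookkeeping, the only care being needed in using the partial order $\preceq$ correctly when evaluating $w(s)$ and when deciding membership in $\DD$ and $\EE$.
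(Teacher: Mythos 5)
Your plan coincides with the paper's proof: both reduce, via the tolerance-$\infty$ Galois scaffold of \cite[Lemma 5.1]{elder:scaffold} together with Theorems \ref{IGMS} and \ref{gencount}, to an eight-case computation of $d(s)$, $w(s)$, $\DD$ and $\EE$ indexed by $b\bmod 4$ and $b-h\in\{0,1,2,3\}$, which the paper records in its Table 1 and then reads off exactly as you describe. The only substantive content you leave unexecuted is that table itself (which sits in the paper's proof, not in the theorem statement); also note that your congruence $b_1\equiv b_2\bmod 4$ is correct and is what makes the criterion expressible in terms of $b_1$, but the collapse to eight cases already follows from the weaker observation the paper uses, namely that $w(s)=d(s)$, $\DD$ and $\EE$ depend only on $b_2\bmod 4$, $b_1\bmod 2$ and $h\bmod 4$.
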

\begin{proof} 
By \cite[Lemma 5.1]{elder:scaffold}, $L/K$ has a Galois scaffold of
precision $\infty$, so we may apply Theorems \ref{IGMS} and
\ref{gencount}. Recall that $b$ in Theorem \ref{IGMS} satisfies $b
\equiv b_2 \pmod{4}$ and $0 \leq b-h <4$. As the ramification breaks
$b_1$ and $\frac{1}{2}(b_1+b_2)$ of the  
quadratic subextensions $F/K$ of $L/K$ must be odd, we have   
$b_1 \equiv b_2 \equiv 1$ or $3 \pmod{4}$.  Since
the condition $w(s)=d(s)$, together with the sets $\DD$ and $\EE$,
only depends on the residue classes of $h$, $b_1$ and $b_2 \bmod 4$, there is no loss of generality in assuming that
$b_1=b_2=b=1$ or $3$ and $b-3 \leq h \leq b$. Then $\eub(s)=bs$ and
the values of $d(s)$ and $w(s)$ are as shown in Table 1, which also
shows the sets $\DD$ and $\EE$ occurring in Theorem \ref{gencount}.
To obtain the $w(s)$, note that $w(0)=d(0)=0$,
$w(1)=\min(d(1)-d(0),d(3)-d(2))$, $w(2)=\min(d(2)-d(0),d(3)-d(1))$,
$w(3)=d(3)-d(0)$.

From Table 1, we have $w(s)=d(s)$ for all $s$ except in the
cases $b=1$, $h=-2$ and $b=3$, $h=1$. The criterion for
$\euP_2^h$ to be free then follows from Theorem \ref{IGMS}. In the cases where 
$\euP_2^h$ is not free, $|\DD|=3$, so that $\euP_2^h$ requires 3
generators over $\euA$ by Theorem \ref{gencount}. The cardinalities of
the sets $\EE$ in Table 1 show that the embedding dimension is as stated.
\end{proof}

\begin{table}  
\centerline{ 
\begin{tabular}{|r|r|r r r r|  r r r r | c | c |} \hline
  &   &  \multicolumn{4}{c|}{$d(s)$}  &  \multicolumn{4}{c|}{$w(s)$}  
     &   &  \\  \cline{3-10}
  & & \multicolumn{4}{c|}{$s=$}  &  \multicolumn{4}{c|}{$s=$} & 
  &  \\
 $b$ & $h$ & 0 & 1 & 2 & 3 & 0 & 1 & 2 & 3 & $\DD$ & $\EE$ \\ \hline   
1 & 1 & 0 & 0 & 0 & 0 & 0 & 0 & 0 & 0 & $\{0\}$ & $\{0,1,2\}$ \\ 
1 & 0 & 0 & 0 & 0 & 1 & 0 & 0 & 0 & 1 & $\{0\}$ & $\{0,1,2,3\}$  \\ 
1 & -1 & 0 & 0 & 1 & 1 & 0 & 0 & 1 & 1 & $\{0\}$ & $\{0,1,2\}$  \\ 
1 & -2 & 0 & 1 & 1 & 1 & 0 & 0 & 0 & 1 & $\{0,1,2\}$ & $\{0,1,2,3\}$  \\ \hline
3 & 3 & 0 & 0 & 1 & 2 & 0 & 0 & 1 & 2 & $\{0\}$ & $\{0,1,2,3\}$  \\ 
3 & 2 & 0 & 1 & 1 & 2 & 0 & 1 & 1 & 2 & $\{0\}$ & $\{0,1,2\}$ \\ 
3 & 1 & 0 & 1 & 2 & 2 & 0 & 0 & 1 & 2 & $\{0,1,2\}$ & $\{0,1,2,3\}$  \\ 
3 & 0 & 0 & 1 & 2 & 3 & 0 & 1 & 2 & 3 & $\{0\}$ & $\{0,1,2\}$   \\  \hline
\end{tabular}
}  
\vskip5mm

\caption{The biquadratic case: $d(s)$, $w(s)$, $\DD$ and $\EE$.} 
\end{table}

\subsection{Weakly ramified $p$-extensions} \label{weakly}

A Galois extension $L/K$ of local fields with Galois group $G$ is said
to be weakly ramified if its second ramification group $G_2$ is
trivial. Then $\euP_L$ is free over the group ring $\euO_K[G]$, and
$\euO_L$ is free over the order $\euO_K[G][\pi^{-1} \sum_{g \in G_0}
  g]$, where $\pi$ is a uniformizing parameter of $K$ and $G_0$ is the
inertia subgroup of $G$ (see for instance \cite{johnston}). Moreover,
a fractional ideal $\euP_L^h$ is free over $\euO_K[G]$ if and only if
$h \equiv 1 \pmod{|G_1|}$ \cite[Theorem 1.1]{koeck}. Thus if $L/K$ is
totally and weakly ramified of degree $p^n$ then the ideals $\euP_L^h$ are
free over their associated orders when $h \equiv 0$ or $1
\pmod{p^n}$. For other values of $h$, nothing seems to be known when
$n>1$ beyond the fact that $\euP_L^h$ cannot be free over $\euO_K[G]$.
The case $n=1$ is covered by Ferton's result \cite{ferton:ideals}
mentioned in Remark \ref{deg-p-consequences}.  

In this subsection, we will give detailed information on $\euP_L^h$
for all $h$ (and arbitrary $n$), assuming that $K$ has characteristic
$p$ and has perfect residue field. We will determine precisely when
$\euP_L^h$ is free over its associated order $\euA$, and will obtain
supplementary information about the minimal number of generators of
$\euP_L^h$ over $\euA$ and the embedding dimension of
$\euA$. Our results will be expressed in terms of combinatorial
properties of the base-$p$ digits of numbers closely related to $h$.

Let $K$ be as just described, and let $L/K$ be a totally and weakly
ramified extension of degree $p^n$. Thus $L/K$ has ramification breaks
$b_1= \cdots = b_n=1$, and its Galois group must be elementary
abelian. Moreover, $L/K$ admits a Galois scaffold of precision
$\infty$. For $n=1$, we have already seen this in Example \ref{deg-p},
and for $n \geq 2$ it follows from \cite[Lemma
  5.3]{elder:scaffold}. We can therefore apply Theorems \ref{IGMS} and
\ref{gencount}.

We first define some notation. For $s= \sum_{i=1}^n s_{(n-i)}p^{n-i} =
\sum_{j=0}^{n-1} s_{(j)} p ^j \in \Spn$, set 
$$ \alpha(s) = | \{ j : 1 \leq j \leq n-1, 
         j>v_p(s), s_{(j)} \neq p-1 \}|, $$
$$  \beta(s) = \max\{c :  0 \leq  c < n-v_p(s), \,  s_{(n-1)}= \ldots =
                               s_{(n-c)}={\textstyle{\frac{1}{2}}}(p-1)\}, $$
where the maximum is to be interpreted as $0$ if no such $c$ exists,
and 
$$ \gamma(s) = \begin{cases} 1 & \mbox{if } p=2 \mbox{ and } s=2^{n-1}, \cr
                             0 &  \mbox{otherwise.} \end{cases} $$
Thus $\alpha(s)$ is the number of base-$p$ digits of $s$ which are not
equal to $p-1$, including any leading $0$'s
$s_{(n-1)}=\cdots=s_{(m)}=0$, but excluding the last
nonzero digit $s_{(v)} \neq 0$ for $v=v_p(s)$ and any trailing $0$'s
$s_{(v-1)}=\cdots=s_{(0)}=0$.
Also, $\beta(s)$ is the number of leading base-$p$ digits (including leading
$0$'s but excluding the last nonzero digit) which are equal to
$\frac{1}{2}(p-1)$. In particular, $\beta(s)=0$ if 
$s<\frac{1}{2} p^{n-1}(p-1)$ or if $p=2$.

For $0 \leq j \leq n-1$, we define 
$$   \left\lfloor s \right\rfloor_j =   p^j \left\lfloor
      \frac{s}{p^j} \right \rfloor = \sum_{i=j}^{n-1} s_{(i)}p^i, $$
and
$$ \left \lceil s \right \rceil_j =  
  p^j \left \lceil \frac{s}{p^j} \right \rceil 
  = \begin{cases} \left \lfloor s \right \rfloor_j = s & \mbox{if }
                                   s \equiv 0 \pmod{p^j} \cr
           \left \lfloor s \right \rfloor_j  + p^j &
           \mbox{otherwise. } \end{cases}  $$ 
\begin{theorem} \label{weak}
Let $L/K$ be a totally and weakly ramified extension of degree $p^n$
in characteristic $p$. Let $h \in \mathbb{Z}$.

\begin{itemize}
\item[(i)] If $h \equiv 1 \pmod{p^n}$ then $\euP_L^h$ is free over its
  associated order, and this order has embedding dimension $n+1$.

\item[(ii)] If $h \not \equiv 1 \pmod{p^n}$, let $h' \equiv h \pmod{p^n}$
  with $2 \leq h' \leq p^n$, and write $m=h'-1$ and 
$k=\max(m,p^n-m)$. Then
\begin{itemize}
\item[(a)] $\euP_L^h$ is free over its associated order $\euA$ if and
  only if $h' \geq 1 + \frac{1}{2}p^n$;
\item[(b)] if $ \euP_L^n$ is not free, the minimal number of
  generators of $\euP_L^h$ as a module over $\euA$ is $2+\alpha(m)-\beta(m)$;
\item[(c)] the embedding dimension of $\euA$ is
  $n+2+\alpha(k)-\gamma(k)$.
\end{itemize}
\end{itemize}     
(Note that when $h=0$ we have $h'=p^n$ so that, in particular, $\euO_L$
is free over its associated order; cf.~Remark \ref{weakly-val} below.) 
\end{theorem}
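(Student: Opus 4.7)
The plan is to apply Theorems \ref{IGMS} and \ref{gencount} directly. Since $L/K$ is totally and weakly ramified of degree $p^n$, its ramification breaks are $b_1=\cdots=b_n=1$; hence by \cite[Lemma 5.3]{elder:scaffold} for $n\geq 2$ and Lemma \ref{deg-p-scaff} for $n=1$, there is a Galois scaffold of tolerance $\tol=\infty$, and the main theorems apply for every $h$. With $b_i=1$ one has $\eub(s)=s$, and the integer $b\in\Spn(h)$ of Proposition \ref{NBT} is the unique representative of $1\bmod p^n$ in $\Spn(h)$. In case (i) one has $b=h$, so $d(s)\equiv 0$ and therefore $w(s)\equiv 0$; in case (ii) one has $b-h=p^n-m$, so $d(s)=\mathbf{1}_{s\geq m}$, and by taking the maximal $u=p^n-1\succeq s$ in Corollary \ref{w-bis} one finds $w(s)=\mathbf{1}_{s\geq k}$ with $k=\max(m,p^n-m)$.

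These step-function descriptions settle case (i) and part (ii)(a) immediately: in case (i) the condition $w=d$ holds and $\EE$ reduces to those $u$ with no $s$ satisfying $0\prec s\prec u$, namely $\{0,1,p,\ldots,p^{n-1}\}$, giving embedding dimension $n+1$; in case (ii), $w(s)\neq d(s)$ occurs precisely for $m\leq s<k$, so $w=d$ everywhere iff $k=m$ iff $m\geq p^n-m$ iff $h'\geq 1+p^n/2$. For parts (ii)(b) and (ii)(c), since $d,w\in\{0,1\}$ the defining conditions of $\DD$ and $\EE$ collapse to interval constraints on $s$; using the extremes $s_{\min}=p^{v_p(u)}$ and $s_{\max}=u$ (or $u-p^{v_p(u)}$ for $\EE$) one obtains
\[ \DD = \{0\}\cup\{u\in\Spn : m\leq u< p^n-m,\ u<m+p^{v_p(u)}\}, \]
\[ \EE = \{0,1,p,\ldots,p^{n-1}\}\cup\{u\in\Spn : k\leq u<k+p^{v_p(u)}\}. \]

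The counts are obtained by parameterizing $u$ by $j=v_p(u)$. Writing $u=cp^j$ with $p\nmid c$, the constraint $m\leq cp^j\leq m+p^j-1$ forces $c=\lceil m/p^j\rceil$ (impossible for $j<v_p(m)$), and $p\nmid c$ holds iff $j=v_p(m)$ or $m_{(j)}\neq p-1$; this produces $1+\alpha(m)$ candidates for $\DD\setminus\{0\}$. The main technical obstacle will be to show that exactly $\beta(m)$ of these fail the upper bound $u<p^n-m$. Setting $M_j=\lfloor m/p^j\rfloor$ and $R_j=m-M_j p^j$, the failure condition $u+m\geq p^n$ rewrites as $(2M_j+1)p^j+R_j\geq p^n$, i.e.\ $M_j\geq(p^{n-j}-1)/2$ for odd $p$. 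This reduces to a digit-wise comparison of the top $n-j$ digits of $m$ against the all-$((p-1)/2)$ pattern; the constraint $m<p^n/2$ forces $m_{(n-1)}\leq(p-1)/2$, and a recursive subtraction of leading $(p-1)/2$ digits shows the inequality holds precisely for $j\in\{n-\beta(m),\ldots,n-1\}$. In the $p=2$ case, $m<2^{n-1}$ forces $m_{(n-1)}=0$ and rules out every failure, consistent with $\beta(m)=0$. Restoring $u=0$ yields $|\DD|=2+\alpha(m)-\beta(m)$, proving (ii)(b).

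The $\EE$ count is parallel but simpler, since $u<p^n$ is automatic: whenever $k_{(j)}\neq p-1$, the candidate $(K_j+1)p^j$ (with $K_j=\lfloor k/p^j\rfloor$) lies strictly below $p^n$. Thus all $1+\alpha(k)$ candidates succeed, comprising $u=k$ for $j=v_p(k)$ together with $(K_j+1)p^j$ for $j>v_p(k)$ with $k_{(j)}\neq p-1$. The only remaining bookkeeping is that $u=k$ may coincide with some basic element $p^i$; since $k\geq p^n/2$, this forces $k=p^{n-1}$ and hence $p=2$, $m=2^{n-1}$, which is exactly $\gamma(k)=1$. The other candidates are never $p$-powers (since $K_j+1\geq 2$ and $p\nmid(K_j+1)$), so no further corrections arise. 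Combining, $|\EE|=(n+1)+(1+\alpha(k))-\gamma(k)=n+2+\alpha(k)-\gamma(k)$, completing (ii)(c).
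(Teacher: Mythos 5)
Your proposal is correct and follows essentially the same route as the paper: reduce to the step functions $d(s)=\mathbf{1}_{s\geq m}$ and $w(s)=\mathbf{1}_{s\geq k}$, translate membership in $\DD$ and $\EE$ into the interval conditions $u-p^{v_p(u)}<m$ (resp.\ $<k$), and count the resulting elements $\lceil m\rceil_j$, $\lceil k\rceil_j$ over $j$, with the corrections $\beta(m)$ and $\gamma(k)$. The only cosmetic difference is that you handle the $\beta(m)$ correction by rewriting the failure condition as $M_j\geq(p^{n-j}-1)/2$ and comparing digits against the all-$\tfrac{p-1}{2}$ pattern in one pass, where the paper separates the all-$\tfrac{p-1}{2}$ ``special case'' from the general one; both arguments are equivalent.
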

\begin{proof}
As $b_i=1$ for each $i$, we have $\eub(s)=s$. Without loss of
generality, we suppose that $2\leq h \leq p^n+1$. Thus $b=p^n+1$, and
$h'=h$ in (ii). We then have  
\begin{equation} \label{d-wr}
   d(s)  = \left\lfloor \frac{1+p^n+s-h}{p^n} \right \rfloor  
      = \begin{cases}
               1 & \mbox{if } s \geq m; \cr
               0 & \mbox{if } s < m. 
        \end{cases} 
\end{equation}

\noindent 
(i) If $h=p^n+1$ we have $d(s)=0$ for all $s$, and hence $w(s)=0$ for all $s$ as
well. Thus $\euP_L^{h}$ is free over its associated order $\euA$ by
Theorem \ref{IGMS}. In Theorem \ref{gencount}, we have  
$\DD=\{0\}$, $\EE=\{0,1,p,\ldots, p^{n-1}\}$, so that $\euA$ has
embedding dimension $n+1$.
\smallskip

\noindent 
(ii) Now let $2 \leq h \leq p^n$. We first determine the  $w(s)$;
for any $s \in \mathbb{S}_{p^n}$ we have 
\begin{eqnarray*}
 w(s)=1 & \LRA & d(u)=1 \mbox{ and } d(u-s)=0 
                           \mbox{ for all } u \succeq s \\
        & \LRA & u \geq m \mbox{ and } u-s<m 
                           \mbox{ for all } u \succeq s \\
        & \LRA & s \geq m \mbox{ and } (p^n-1)-s<m \\
        & \LRA & s \geq \max(m,p^n-m),  
\end{eqnarray*}
so that 
\begin{equation} \label{w-wr}
      w(s)  = \begin{cases}
               1 & \mbox{if } s \geq k; \cr
               0 & \mbox{if } s < k. 
        \end{cases} 
\end{equation}
Note that $\frac{1}{2}p^n \leq k \leq  p^n-1$. 
\smallskip

\noindent
(a) From (\ref{d-wr}) and (\ref{w-wr}) we have 
$$ d(s)=w(s) \mbox{ for all } s \in \Spn \LRA k=m \LRA h \geq 
     1 + \textstyle{\frac{1}{2}}p^n. $$

\noindent 
(b) Let $2 \leq h < 1+\frac{1}{2}p^n$. Then $0<m<k$. It is immediate from  
(\ref{d-wr}) and (\ref{w-wr}) that $\DD$ contains $0$ and
$m$. Moreover, if $0<u<m$ or $u \geq k$ then $u \not \in \DD$ since
$d(u)=d(0)+w(u)$.  

We need to show that there are $\alpha(m)-\beta(m)$ elements $u \in
\DD$ with $m<u<k$. Let $j=v_p(u)$. Then $0 \leq j \leq n-1$ and
$u_{(j)} \neq 0$. If $u-p^j \geq m$ then, taking $s=p^j$, we have
$d(u-s)=1$ and $0 \prec s \preceq u$, so that $u \not \in
\DD$. Conversely, if $u \not \in \DD$, then there is some $s$ with
$d(u-s)=1$ and $0 \prec s \preceq u$. Since $v_p(u)=j$, we must have
$s \geq p^j$ and hence $m \leq u-s \leq u-p^j$. It follows that $u \in
\DD$ if and only if $u-p^j<m$. But as $j=v_p(u)$, we have $u-p^j<m<u$
if and only if $u=\lceil m \rceil_j$ and $v_p(m)<j$. We conclude that,
for each $j>v_p(m)$, there is at most one $u \in \DD$ with $v_p(u)=j$
and $u>m$, namely $u=\lceil m \rceil_j$; such a $u$ exists if and only
if $\lceil m \rceil_j<k$ and $v_p\left(\lceil m
\rceil_j\right)=j$. Since $j>v_p(m)$, the latter condition is
equivalent to $m_{(j)} \neq p-1$, and the number of $j$ for which this
occurs is $\alpha(m)$. We claim that, amongst these, there are
$\beta(m)$ values of $j$ for which $\lceil m \rceil_j \geq k$.

We count the $j\leq n-1$ such that 
\begin{equation} \label{beta-count}
 m_{(j)} \neq p-1 \mbox{ and } \lceil m \rceil_j \geq k. 
\end{equation}
Any such $j$ automatically satisfies $j>v_p(m)$ since if $j \leq
v_p(m)$ then $\lceil m \rceil_j=m<k$.  We distinguish two
cases. Firstly, we consider the special case where the base-$p$ digits
of $m$ are all $\frac{1}{2}(p-1)$, possibly followed by a block of $0$'s. Thus
$m=\frac{1}{2}(p-1)(p^{n-1}+ \cdots + p^v)$ where $v=v_p(m) \geq 0$.
In this case, $\beta(m)=n-v-1$ and $k=m+p^v$. If $j \geq
n-\beta(m)=v+1$ then $\lceil m \rceil_j = \lceil k \rceil_j >k$, and
of course $m_{(j)} = \frac{1}{2}(p-1) \neq p-1$, while if $j \leq v$
then $\lceil m \rceil_j = m <k$. Thus there are $\beta(m)$ values of
$j$ satisfying (\ref{beta-count}) in this case. Secondly, suppose we
are not in this special case, and let $c= \beta(m)$. Then $0 \leq c
\leq n-1$ and $v_p(m)<n-c$. Moreover, since $m<\frac{1}{2}p^n$
(because $m<k$) and we are not in the first case, we have 
$m_{(n-c-1)}< \frac{1}{2}(p-1)$.  If $p \neq 2$ then 
$m_{(n-c-1)} \leq \frac{1}{2}(p-3)$ and we may write
$m=\frac{1}{2}(p-1)(p^{n-1}+ \cdots + p^{n-c})+r$ with 
$0<r<\frac{1}{2}(p-3)p^{n-c-1} + p^{n-c-1}
=\frac{1}{2}(p-1)p^{n-c-1}$. Then $\lceil m \rceil_j = \lceil k 
\rceil_j >k$ if $j\geq n-c$, and $\lceil m \rceil_j \leq \lfloor k
\rfloor_j \leq k$ if $j<n-c$. Thus there are again $\beta(m)$
values of $j$ satisfying (\ref{beta-count}). Finally, if $p=2$ then
$\beta(m)=0$ and, since $m<2^{n-1}<k$, we have $\lceil m
\rceil_j < k$ for all $j<n$, yet again giving the required conclusion.
\smallskip

\noindent
(c) By Theorem \ref{gencount}, the embedding dimension of $\euA$ is
     $|\EE|$ where 
$$ \EE = \{ u \in \Spn \, : \, w(u)>w(u-s)+w(s) \mbox{ for all } s \in
\Spn \mbox{ with } 0 \prec s \prec u \}. $$ This set will be unchanged
on replacing $h$ by $p^n+2-h$, since both give the same value for $k$
and hence the same sequence $w(s)$. Certainly $\EE$ contains the $n+1$
elements $0$, $1$, $p, \ldots, p^{n-1}$, and no other elements $u<k$. It also contains $k$
since $w(k)=1$ and $w(s)=0$ for $s<k$. Note that $k > p^{n-1}$ except
in the case $p=2$, $k=2^{n-1}$ (corresponding to $h=2^{n-1} +
1$). Thus the number of elements $u \in \EE$ with $u \leq k$ is $n+2
-\gamma(k)$. The proof will be complete if we show that there are
precisely $\alpha(k)$ elements $u \in \EE$ with $u>k$. But if $u>k$
and $v_p(u)=j$ then, arguing as in (b) above, $u \in \EE$ if and only
if $u-p^j<k$, and the number $u$ satisfying this condition is
$\alpha(k)$.
\end{proof}
\begin{remark}  \label{gp-ring}
When $h \equiv 1 \pmod{p^n}$, the associated order $\euA$ is just the
group ring $\euO_K[G]$, and 
its maximal ideal is $\euM = \euP_K + I$ where $I$ is the augmentation
ideal of $\euO_K[G]$. Thus $\euM/\euM^2$ is generated as a
$\kappa$-vector space by the $n+1$ elements $\pi$, $\sigma_1-1,
\ldots, \sigma_n-1$, where $\pi \in K$ with $v_K(\pi)=1$ and
$\sigma_1, \ldots, \sigma_n$ is any set of generators of $G$.
\end{remark}
\begin{remark} \label{weakly-val}
In the case $h =0$, we have $h'=p^n$, so that
$k=m=p^n-1$ and $\alpha(k)=\gamma(k)=0$ (unless $p^n=2$, when
$\gamma(k)=1$).  Hence $\euO_L$ is free over its associated order
$\euA_{L/K}$, as we already know from \cite[Lemma 5.3]{elder:scaffold}
and \cite[Theorem 1.1]{byott:scaffold}. Moreover, $\euA_{L/K}$ has
embedding dimension $n+2$ (or $n+1$ when $p^n=2$).  One can check
directly that
$$    d(s)  = w(s) = \begin{cases}
               1 & \mbox{if } s=p^n-1;  \cr
               0 & \mbox{if } 0\leq s < p^n-1, 
        \end{cases} $$
so that $\EE=\{0,1,p,p^2, \ldots ,p^{n-1}, p^n-1\}$. In fact, 
$$ \euA = \euO_K[G][\pi^{-1} \Sigma], $$
where $v_K(\pi)=1$ and $\Sigma = \sum_{\sigma \in G} \sigma$ is the
trace element of $K[G]$.  Thus, with the notation of Remark
\ref{gp-ring}, $\euM/\euM^2$ is generated by $\pi$, $\sigma_1-1,
\ldots, \sigma_n-1, \pi^{-1}\Sigma$. 
\end{remark}

To give some idea of the range of complexity occurring in the Galois
module structure of ideals for wildly ramified extensions, we record
the maxima and minima of the number of generators and the
embedding dimension. We are not aware of any similar results in the
Galois module literature beyond the discussion of the degree $p$ case
in \cite{desmit:2}.

\begin{corollary} 
Let $L/K$ be as in Theorem \ref{weak}, and let $\euA$ be the associated order of
$\euP_L^h$. 
\begin{itemize}
\item[(i)]
\begin{itemize}
\item[(a)] When $p>2$, the maximal number of generators required for
  $\euP_L^h$ over $\euA$ is $n+1$. The minimal number of
  generators in cases where $\euP_L^h$ is not free is $2$. This occurs,
  for example, when $h=\frac{1}{2}(p^n+1)$. 
\item[(b)] When $p=2$ and $n>1$, the maximal number of generators for
  $\euP_L^h$ over  $\euA$ is again $n+1$. There are no $\euP_L^h$
requiring precisely $2$ generators. Precisely $3$ generators are required,
for example, if $h=2^{n-1}$. 
\end{itemize}
\item[(ii)]
\begin{itemize}
\item[(a)] When $p> 2$, the embedding dimension of $\euA$ can take any
  value between $n+1$ and
$2n+1$. The minimum $n+1$ occurs only for $h \equiv 1 \pmod{p^n}$. The
  value $n+2$ occurs, for example, if $h=2$ or $h=p^n$. The maximal
  value $2n+1$ occurs, for example, if $h=\frac{1}{2}(p^n+1)$.
\item[(b)] When $p=2$, the minimum embedding dimension $n+1$ is attained 
only for $h \equiv 1$ and $2^{n-1} + 1 \pmod{2^n}$. The maximum is $2n$,
attained only for $h \equiv 2^{n-1}$ and $2^{n-1}+2 \pmod{2^n}$.
\end{itemize}
\end{itemize}
\end{corollary}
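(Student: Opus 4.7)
The plan is to deduce the corollary mechanically from Theorem \ref{weak} by determining the extreme values of the digit-combinatorial invariants $\alpha(m)-\beta(m)$ (which governs the number of generators when $\euP_L^h$ is not free) and $\alpha(k)-\gamma(k)$ (which governs the embedding dimension of $\euA$ when $h \not\equiv 1 \bmod p^n$), subject to the constraints $m = h'-1 \in [1,\lfloor(p^n-1)/2\rfloor]$ in the non-free case and $k = \max(m,p^n-m)\in[\lceil p^n/2\rceil,p^n-1]$.

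For part (i), I would exhibit the extremal examples directly. For $p>2$, the maximum $\alpha(m)-\beta(m)=n-1$ is realized at $m=1$ (the higher base-$p$ digits are all $0$, which is neither $p-1$ nor $(p-1)/2$), while the minimum value $0$ is realized at the suggested $m=(p^n-1)/2$, whose base-$p$ digits are all $(p-1)/2$, yielding $\alpha(m)=\beta(m)=n-1$. For $p=2$, one has $\beta\equiv 0$, and the constraint $m\le 2^{n-1}-1$ forces at least one zero digit in positions $[1,n-1]$, so $\alpha(m)\ge 1$; this rules out the value $2$, and $m=2^{n-1}-1$ (i.e.\ $h=2^{n-1}$) realizes $\alpha(m)=1$, giving the minimum count $3$.

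For part (ii)(a), I plan to analyze the allowed values of $\alpha(k)$ on the relevant range of $k$, noting that $\gamma\equiv 0$ for $p>2$. The minimum $\alpha(k)=0$ occurs at $k=p^n-1$ (hence $h\in\{2,p^n\}$), the maximum $\alpha(k)=n-1$ at $k=(p^n+1)/2$ whose digits $((p+1)/2,(p-1)/2,\dots,(p-1)/2)$ are all distinct from $p-1$, giving $h=(p^n+1)/2$. Intermediate values $a\in[1,n-2]$ are obtained by digit-surgery on $p^n-1$: replacing the digits in positions $1,\dots,a$ by $0$ produces a $k$ with $\alpha(k)=a$ that still exceeds $(p^n+1)/2$. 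Adjoining the already-free case $h\equiv 1 \bmod p^n$, which supplies the value $n+1$, yields the full range $\{n+1,n+2,\dots,2n+1\}$.

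For part (ii)(b) with $p=2$, since $k\in[2^{n-1},2^n-1]$ forces $k_{(n-1)}=1$, the quantity $\alpha(k)$ is bounded above by the number of zero digits in positions $1,\dots,n-2$, hence by $n-2$. The embedding dimension $n+1$ in the case $h\not\equiv 1$ requires $\alpha(k)-\gamma(k)=-1$, forcing $\gamma(k)=1$, that is $k=2^{n-1}$, hence $m=2^{n-1}$ and $h\equiv 2^{n-1}+1 \bmod 2^n$; together with the case $h\equiv 1$, this gives the stated characterization of the minimum. The maximum $2n$ requires $\gamma(k)=0$ and $\alpha(k)=n-2$, which by $v_2(k)=0$ and $k_{(1)}=\cdots=k_{(n-2)}=0$ forces $k=2^{n-1}+1$, equivalently $m\in\{2^{n-1}-1,2^{n-1}+1\}$ and $h\equiv 2^{n-1}$ or $2^{n-1}+2\bmod 2^n$. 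The main bookkeeping obstacle throughout is not the combinatorial analysis itself but the careful translation, via the map $h\mapsto m\mapsto k$, between specific residue classes of $h$ and the extremal digit-configurations; once this correspondence is set up, the assertions of uniqueness in the $p=2$ case fall out immediately.
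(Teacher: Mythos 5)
Your proposal is correct and follows the same route the paper intends: the corollary is stated there without proof precisely because it is the mechanical extraction of the extreme values of $\alpha(m)-\beta(m)$ and $\alpha(k)-\gamma(k)$ from Theorem \ref{weak}, which is exactly what you carry out, and all your extremal examples and uniqueness claims check out. The only slightly loose spot is the $p=2$ bound $\alpha(m)\geq 1$: rather than ``at least one zero digit in positions $[1,n-1]$'' you should point to the specific digit $m_{(n-1)}=0$ together with $v_2(m)\leq n-2$, since a zero digit at a position not exceeding $v_p(m)$ would not be counted by $\alpha$.
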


\begin{remark}
If $L/K$ is any extension of local fields (not necessarily Galois)
with an action of an algebra $A$ admitting a scaffold of $\tol \geq
2p^n-1$ whose shift parameters satisfy $b_i \equiv 1 \pmod{p^i}$, then
we can reduce to the case $b_i=1$ for all $i$ by Remark
\ref{adjust-shift}. We will then obtain the same sequences $d(s)$ and
$w(s)$ as in the proof of Theorem \ref{weakly}, so the conclusions of
Theorem \ref{weakly} will still hold.  In particular, this gives an
alternative approach to Theorem \ref{biquad} in the case that $b_1
\equiv 1 \pmod{4}$.
\end{remark}

\begin{remark}
There is an arithmetic interpretation of the fact that the sequence
$w(s)$ is unchanged on replacing $h$ by $p^n+2-h$. Let $L/K$ be a
totally and weakly ramified extension of degree $p^{n}$. Then its
inverse different is $\euP_L^{2(1-p^n)}$. For any $m \in \mathbb{Z}$,
the ideals $\euP_L^{1-p^n+m}$ and $\euP_L^{1-p^n-m}$ are therefore
mutually dual under the trace pairing. Thus, for any $h \in
\mathbb{Z}$, the ideals $\euP_L^h$ and $\euP_L^{2-2p^n-h} \cong
\euP_L^{p^n+2-h}$ are mutually dual. When $h \equiv 1 \pmod{p}$, the
ideal $\euP_L^h$ is isomorphic to its dual, and is free over the group
ring $\euO_K[G]$. If $p=2$ and $h \equiv 1+2^{n-1}$ mod $2^n$, the
ideal $\euP_L^h$ is again isomorphic to its dual, and is free over its
associated order $\euA$; in this case $\euA \neq \euO_K[G]$, although
$\euA$ attains the minimal embedding dimension $n+1$. In the remaining
case $2h \not \equiv 2$ mod $p^n$, the mutually dual ideals $\euP_L^h$
and $\euP_L^{2-2p^n-h}$ are not isomorphic; they have the same
associated order, since both give rise to the same sequence $w(s)$,
but one ideal is free over this order while the other is not.
\end{remark}

\subsection{More on the valuation ring}
 \label{valring}

In this subsection, we discuss how Theorem \ref{IGMS} is related to a
result of Miyata \cite{miyata:cyclic2} and to our previous work in
\cite{byott:QJM} and \cite{byott:scaffold}. This will lead to a
strengthening of \cite[Corollary 1.2]{byott:scaffold}.

We first recall Miyata's result. Let $K$ be a finite extension of
$\mathbb{Q}_p$ containing a primitive $p^n$th root of unity, and let
$L=K(\sqrt[p^n]{a})$ be an extension of degree $p^n$, where $a \in K$
and $p \nmid v_K(a-1)$. Recall that $r(x)$ denotes the least
non-negative residue mod $p^n$ of an integer $x$. We set
$t_0=r(v_K(a-1))$. Miyata \cite[Theorem 5]{miyata:cyclic2} shows that
$\euO_L$ is free over its associated order $\euA_{L/K}$ if and only if
the following condition holds: $t_0+r(it_0)-r(ht_0)>0$ for all
integers $h$, $i$, $j$ such that $0 \leq h \leq i \leq j <p^n$,
$i+j=p^n-1+h$ and $p \nmid \binom{i}{h}$. This can be interpreted as a
condition on the ramification breaks $b_1$, \ldots, $b_n$ of $L/K$,
since, writing $b=r(-v_K(a-1))=p^n-t_0$, we have $b_i \equiv b
\pmod{p^i}$ for $1 \leq i \leq n$.

Miyata's condition was reformulated in \cite{byott:QJM}, where it was
used to deduce a more transparent (but less complete) criterion:
$\euO_L$ is free over $\euA_{L/K}$ if $b$ divides $p^m-1$ for some $m
\in \{1, \ldots, n\}$. The converse is not always true when $n \geq
3$, but for $n=2$ the converse does hold. Thus, for $n=2$, we have
that $\euO_L$ is free if and only if $b$ divides $p^2-1$. This is
closely analogous to the result \cite{ferton} for $n=1$ (cf.~Example 
\ref{deg-p-consequences}): $\euO_L$ is
free if and only if $b \mid (p-1)$.

In \cite{byott:scaffold}, we considered near one-dimensional
extensions $E/F$, and gave a criterion \cite[Theorem
  2.3]{byott:scaffold} for $\euO_E$ to be free over its associated
order. In the notation of the present paper, this criterion is just
the condition that $w(s)=d(s)$ for all $s$, and the result is a
special case of Theorem \ref{IGMS} (with $h=0$). We also showed
\cite[Lemma 2.4]{byott:scaffold} that this criterion was equivalent to
Miyata's, as reformulated in \cite{byott:QJM}. (This is despite the
fact that Miyata's extensions are cyclic in characteristic $0$ and the
near one-dimensional extensions are elementary abelian in
characteristic $p$).

Now, given that $b_i \equiv b \pmod{p^i}$ for $1 \leq i
\leq n$, the equivalence of Miyata's condition and our condition
$w(s)=d(s)$ is a purely numerical statement, depending only on the
parameter $b$. We may therefore combine it with Theorem \ref{IGMS} (in the
case $h=0$) whenever we have an extension $L/K$ admitting a scaffold
with high enough precision and suitable shift parameters. 
We therefore obtain the following result:

\begin{theorem} \label{val-ring-crit}
Let $L/K$ be a totally ramified extension of local fields of degree
$p^n$. Let there be an $A$-scaffold on $L$ with 
shift parameters $b_1,\ldots ,b_n$ that satisfy $b_i\equiv b_n
\pmod{p^i}$ for all $i$ and with precision   
$\tol \geq r(b_n)$. Then $\euO_L$ is free over
  its associated order in $A$ if $r(b_n) \mid (p^m-1)$ for some $m
  \in \{1, 2, \ldots, n\}$. Conversely, if $n\leq 2$ and $\tol \geq 
p^n+r(b_n)$ then $\euO_L$ is free only if $r(b_n) \mid (p^n-1)$.
\end{theorem}

\begin{remark}
We reiterate that, in the case of near one dimensional extensions,
Theorem \ref{val-ring-crit} is \cite[Corollary
  1.2]{byott:scaffold}. The new feature here is that the same
statement holds for any extension (not necessarily Galois, and not
necessarily in characteristic $p$), provided that it admits a scaffold
of high enough precision whose shift parameters satisfy the stated
congruences. These congruences automatically hold for Galois scaffolds
on abelian extensions, cf.~Remark \ref{galois-case}, but also for the
inseparable examples in \S\ref{inseparable} below, where there
is a single shift parameter $b$. 
\end{remark} 

One question which remains unanswered is whether (or under what
conditions) Miyata's cyclic extensions admit a Galois scaffold of
sufficiently high precision for Theorem \ref{IGMS}(ii) to be
applicable. If this were the case, then Miyata's result could be viewed as
particular instance of ours. We hope to return to this question in
future work.

\subsection{A result on the inverse different}

Let $L/K$ be a totally ramified Galois extension of degree $p^n$, with
abelian Galois group. In \cite[Theorem 3.10]{byott:JNTB}, it was shown
that, under a rather mild technical hypothesis, the inverse different
$\euD^{-1}_{L/K}$ of $L/K$ cannot be free over its associated order
unless the ramification breaks satisfy the congruence $b_i \equiv -1
\pmod{p^n}$. (Note that the modulus here is $p^n$, and not $p^i$.) Only
the characteristic $0$ case was considered in \cite{byott:JNTB}, but
the same argument works in characteristic $p$. We will now show that,
if $L/K$ admits a suitable Galois scaffold, this necessary condition
for freeness is also sufficient, and we note an interesting consequence
for the order $\euA$.

\begin{theorem}  \label{inv-diff-thm}
Let $L/K$ be an abelian extension of degree $p^n$ which admits a
Galois scaffold of precision $\tol \geq 2 p^n -1$. Then
$\euD^{-1}_{L/K}$ is free over its associated order $\euA$ if and only
the ramification breaks satisfy $b_i \equiv -1 \pmod{p^n}$ for $1 \leq
i \leq n$. If this occurs, then $\euA$ is also the associated order of
the valuation ring $\euO_L$, and $\euA$ is a Hopf order in the Hopf
algebra $K[G]$, where $G=\Gal(L/K)$.
\end{theorem}
\begin{proof}
The condition (3.11) in \cite[Theorem 3.10]{byott:JNTB} is only
required to ensure that $\euA$ is a local ring (or, equivalently, that
$\euD_{L/K}^{-1}$ is indecomposable as an
$\euO_K[G]$-module). However, this is guaranteed by Theorem
\ref{gencount} under our hypothesis on $\tol$. It follows that
$\euD_{L/K}^{-1}$ cannot be free over $\euA$ unless $b_i \equiv -1
\pmod{p^n}$ for all $i$.

Conversely, suppose that $b_i \equiv -1 \pmod{p^n}$ for all $i$. Then
Hilbert's formula for the different \cite[IV\S2 Prop 4]{serre:local}
gives $\euD^{-1}_{L/K}= \euP_L^{-w}$ with $w \equiv 0 \pmod{p^n}$, so
that $\euD_{L/K}^{-1}=\delta \euO_L$ for some $\delta \in K$.  It
follows that $\euD_{L/K}^{-1}$ is isomorphic to $\euO_L$ as an
$\euO_K[G]$-module. Thus both $\euO_L$ and $\euD_{L/K}^{-1}$ have the
same associated order $\euA$, and if either of them is free over
$\euA$ then so is the other. Now since the assumption on the $b_i$ implies the
weaker congruences $b_i \equiv -1 \pmod{p^i}$, it follows from Example
\ref{bi-1} that $\euO_L$, and hence also $\euD_{L/K}^{-1}$, is indeed
free over $\euA$.  Finally, as $\euD_{L/K}^{-1} = \delta\euO_L$
with $\delta \in K$, and $\euA$ is a local ring, $\euA$ must be a Hopf
order in $K[G]$ by work of Bondarko \cite[Theorem~A and
  Prop.~3.4.1]{bondarko}. 
\end{proof}

\begin{corollary} \label{inv-diff-cong}
Let $L/K$ be an abelian extension of degree $p^n$ which admits a
Galois scaffold of precision $\tol \geq 2 p^n -1$. If the largest
ramification break $b_n$ satisfies the congruence $b_n \equiv -1
\pmod{p^n}$, then we have $b_i \equiv -1 \pmod{p^n}$ for all $i$.
\end{corollary}
\begin{proof}
By the Hasse-Arf Theorem (see Remark \ref{galois-case}), the
hypothesis $b_n \equiv -1 \pmod{p^n}$ ensures that $b_i \equiv -1
\pmod{p^i}$ for all $i$ (which is weaker than the desired conclusion).
But then, on the one hand, it follows that $\euO_L$ is free over its
associated order $\euA$ by Example \ref{bi-1}. On the other hand, from
Hilbert's formula for the different, we again have
$\euD_{L/K}^{-1}=\delta \euO_L$ for some $\delta \in K$. Thus
$\euD_{L/K}^{-1}$ also has associated order $\euA$, and is free over
$\euA$. Hence, by Theorem \ref{inv-diff-thm}, we have the stronger
congruence $b_i \equiv -1 \pmod{p^n}$ for all $i$.
\end{proof}

One can easily construct elementary abelian extensions whose
ramification breaks satisfy $b_i \equiv -1 \pmod{p^i}$ for all $i$,
but do not satisfy $b_i \equiv -1 \pmod{p^n}$ for all $i$. Corollary
\ref{inv-diff-cong} therefore shows that certain realizable sequences
of ramification breaks preclude the existence of a Galois scaffold of
high precision.

\section{Purely inseparable extensions} \label{inseparable}

The purpose of this section is to provide an example of a particularly
natural scaffold (with precision $\tol=\infty$) in the setting of purely inseparable extensions, and
since the results of \S\ref{IGMS-sect} are therefore applicable,
submit the topic of generalized Galois module structure in purely
inseparable extensions for further study.

The divided power Hopf algebra $\A(n)$ of dimension $p^n$ (see
Definition \ref{div-pow} below) is standard example of a Hopf algebra
over a field $K$ of characteristic $p>0$. We will prove the following
result:

\begin{theorem} \label{insep} 
Let $K$ be a local field of characteristic $p>0$,
and let $L$ be any totally ramified and purely inseparable extension of $K$
of degree $p^n$. Let $b$ satisfy $0<b<p^n$ and $\gcd(b,p)=1$.
Then there is an action of $\A(n)$ on $L$ which makes $L$ into an
$\A(n)$-Hopf Galois extension of $K$, and which admits an
$\A(n)$-scaffold with unique shift parameter $b$ and with precision
$\tol=\infty$. 
\end{theorem}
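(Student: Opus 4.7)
The plan is to build $L$ as a primitive extension $K(\xi)$ with $\xi^{p^n}\in K$ of $v_L$-valuation $-bp^n$, give it the classical divided-power action of $\A(n)$, and read off a scaffold of tolerance $\infty$ directly from Lucas' theorem. I would begin by choosing a uniformizer $\pi_L$ of $L$ with $\pi:=\pi_L^{p^n}$ a uniformizer of $K$: since $L/K$ is totally ramified and primitive, $L=K(\alpha)$ with $\alpha^{p^n}=u\pi'^e\in K$ for some unit $u$ and some $e$ coprime to $p$; solving $ae+cp^n=1$ and setting $\pi_L:=\alpha^a\pi'^c$ produces the desired uniformizer. Then set $\xi:=\pi_L^{-b}$, so $v_L(\xi)=-b$ and $\xi^{p^n}=\pi^{-b}\in K$. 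Because $p\nmid b$, $\pi^{-b}$ is not a $p$-th power in $K$, so $L=K(\xi)$ with $K$-basis $\{\xi^j:0\le j<p^n\}$. The reason for twisting to negative valuation is that when a divided-power operator lowers the $\xi$-exponent by $p^{n-i}$ it will raise $v_L$ by $p^{n-i}b$, exactly matching the shift parameter $b$ prescribed by the scaffold.

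I would then define the $\A(n)$-action via $\gamma_k\cdot\xi^j=\binom{j}{k}\xi^{j-k}$ for $0\le j<p^n$, extended $K$-linearly. Vandermonde's identity verifies the higher-derivation property $\gamma_k(xy)=\sum_{i+j=k}\gamma_i(x)\gamma_j(y)$, and consistency with $\xi^{p^n}=\pi^{-b}$ holds because $\binom{p^n}{k}=0$ in characteristic $p$ for $1\le k<p^n$, matching the $K$-linearity condition $\gamma_k(\pi^{-b})=0$. Thus $L$ becomes an $\A(n)$-module algebra. The Hopf Galois property is the classical fact for primitive purely inseparable extensions equipped with a truncated higher derivation (Chase--Sweedler); it can also be checked \emph{ex post} from the scaffold, since Proposition \ref{NBT} delivers $L$ as a free $\A(n)$-module of rank one, and an iteration of the formula quickly shows $L^{\A(n)^+}=K$, which together force the Galois map $L\otimes_K\A(n)\to\End_K(L)$ to be an isomorphism by a dimension count.

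For the scaffold I would take $\Psi_i:=\gamma_{p^{n-i}}$ and
\[
\lambda_t \;:=\; \xi^{\eua(t)}\,\pi^{(t+b\,\eua(t))/p^n},\qquad t\in\bZ.
\]
By definition of $\eua$ (Lemma \ref{eua}(iv)), $b\,\eua(t)\equiv -t\pmod{p^n}$, so the exponent of $\pi$ is an integer; moreover $v_L(\lambda_t)=t$ and $\lambda_{t_1}/\lambda_{t_2}\in K$ whenever $t_1\equiv t_2\pmod{p^n}$, giving condition (i) of Definition \ref{tol-scaffold}. For (ii), Lucas' theorem identifies $\binom{s}{p^{n-i}}$ with the base-$p$ digit $s_{(n-i)}$ in $\mathbb{F}_p$, so $\gamma_{p^{n-i}}\cdot\xi^s=s_{(n-i)}\,\xi^{s-p^{n-i}}$, which vanishes precisely when $s_{(n-i)}=0$. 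Specializing to $s=\eua(t)$: if $\eua(t)_{(n-i)}=0$ then $\Psi_i\cdot\lambda_t=0$; otherwise $\eua(t)\ge p^{n-i}$, and direct arithmetic using Lemma \ref{eua}(v) gives $\eua(t+p^{n-i}b)=\eua(t)-p^{n-i}$ together with the equality $(t+p^{n-i}b+b\,\eua(t+p^{n-i}b))/p^n=(t+b\,\eua(t))/p^n$, so that $\Psi_i\cdot\lambda_t=\eua(t)_{(n-i)}\,\lambda_{t+p^{n-i}b}$ on the nose, with unit $u_{i,t}=\eua(t)_{(n-i)}\in\{1,\ldots,p-1\}\subset\euO_K^\times$. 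Also $\Psi_i\cdot 1=\binom{0}{p^{n-i}}\xi^{-p^{n-i}}=0$. Since every congruence is an equality, the tolerance is $\tol=\infty$.

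The only delicate point I anticipate is the sign convention in the first step: choosing $\xi$ of \emph{negative} (rather than positive) valuation is what converts the divided-power lowering operators into valuation-raising operators matching the scaffold's shift $+p^{n-i}b$. Once this choice is locked in, all the remaining verifications are direct applications of Lucas' theorem and the elementary arithmetic of $\eua$ and $\eub$ recorded in Lemma \ref{eua}, and no approximation or tolerance estimate is needed.
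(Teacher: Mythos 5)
Your proposal is correct and follows essentially the same route as the paper: generate $L$ by a negative power $\xi=\pi_L^{-b}$ of a uniformizer, equip it with the standard divided-power action, cite the Chase--Sweedler/Sweedler argument for the Hopf--Galois property, and read off a tolerance-$\infty$ scaffold from Lucas' theorem. The only (immaterial) difference is that the paper normalizes $\lambda_t$ by dividing by $\eua(t)_{(0)}!\cdots\eua(t)_{(n-1)}!$ so that all units $u_{i,t}$ equal $1$, whereas your $\lambda_t$ gives $u_{i,t}=\eua(t)_{(n-i)}\in\{1,\dots,p-1\}\subset\euO_K^\times$, which Definition \ref{tol-scaffold} equally permits.
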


This means that we can study generalized Galois module structure
questions for each of these actions of $\A(n)$: the valuation ring
$\euO_L$ of $L$, or more generally any fractional ideal $\euP_L^h$, is
a module over its associated order in $\A(n)$ under each action,
and, as before, we can ask if it is free, how many generators are
required if it is not, and what the embedding dimension of the
associated order is. The answers to these questions are given in terms
of $b$ by Theorems \ref{IGMS} and \ref{gencount}, and so will be
identical to those for any Galois extension of degree $p^n$ admitting
a Galois scaffold of high enough precision and having lower
ramification breaks $b_i \equiv b \pmod{p^i}$ for $1 \leq i \leq n$.
In particular, it follows from Theorem \ref{val-ring-crit} that
$\euO_L$ will be free over its associated order if $b$ divides $p^m-1$
for any $m \leq n$ (and conversely for $n=1$, $2$).

The material in this section is partly based on discussions with Alan Koch.

\subsection{Hopf Galois structures}

Let $L/K$ be a finite extension of fields, and let $H$ be a
cocommutative $K$-Hopf algebra with comultiplication $\Delta: H \to H
\otimes H$, augmentation (or counit) $\epsilon: H \to K$ and antipode
$\sigma: H \to H$.  We say that $L$ is an $H$-module algebra if there
is a $K$-linear action of $H$ on $L$ such that the following hold: for
all $h \in H$ and $s$, $t \in L$,
\[  \mu (\Delta(h)(s \otimes t) )= h(st)  \]
where  $\mu$ is the multiplication map $L \otimes L \to L$; and 
\[   h \cdot 1 = \epsilon(h) 1 \text{  for all } h \text{  in  } H.\]
Then $L/K$ is an $H$-Hopf Galois extension if $L$ is an $H$-module
algebra and the map 
$$ L\otimes_K H \to \End_K(L), $$
given by $(s \otimes h)(t) \mapsto sh( t)$ for $h \in H$ and $s$, $t \in
L$, is a bijection.   

This notion, defined (in dual form) in \cite{CS69}, extends the
classical concept of a finite Galois extension of fields:  if $L/K$ is
Galois with group $G$, then the map 
\[ L \otimes_K K[G] \to \End_K(L) \] 
is bijective.  

An early example of a class of Hopf Galois extensions was furnished by
finite primitive purely inseparable field extensions.  Let $K$ be a
field of characteristic $p > 0$ and let $L = K(x)$ with $x^{p^n} = a$,
where $a \in K$ but $a^{1/p} \not \in K$.  Note that $x^{p^n} -a$ is
irreducible. Then $L$ is called a primitive extension of $K$ of
exponent $n$.  Associated with a primitive extension $L/K$ of exponent
$n$ are higher derivations, or unital Hasse-Schmidt derivations, of
length $p^n$.  A higher derivation on $L/K$ is a sequence
\[ \mathcal{D} = (D_0 = 1, D_1, \ldots, D_{p^n-1})\]
of $K$-homomorphisms from $L$ to $L$ such that for all $m$ and for all
$a, b$ in $L$,  
\[ D_m(ab) = \sum_{i=0}^m  D_i(a)D_{m-i}(b) \]
and $D_i(a) = \delta_{i, 0}a$ for all $a \in K$.  (Unital means $D_0
= 1$:  see \cite{He13}.) In particular, $D_1$ is a derivation of $L$.
The set of all $a \in L$ so that $D_i(a)= 0$ for all $i > 0$ is the
field of constants of $\mathcal{D}$ (which contains $K$).  

The significance of higher derivations in inseparable field theory
stems in part from characterizations of finite modular purely
inseparable field extensions $L/K$.  A  finite purely inseparable
field extension $L$ of $K$ is modular if $L$ is isomorphic to a tensor
product $K(x_1) \otimes  \ldots \otimes K(x_r)$ of primitive
extensions.  Sweedler \cite{Sw68} characterized a finite modular
extension as one for which $K$ is the field of constants of all higher
derivations on $L/K$. 

Higher derivations of purely inseparable field extensions can arise from actions of divided power Hopf algebras, defined as follows.

\begin{definition} \label{div-pow} (c.f. \cite[5.6.8]{Mo93})  Let $K$
  be a field of 
  characteristic $p$.  The \emph{divided power} $K$-Hopf algebra of
  dimension $p^n$ is the $K$-vector space $\mathcal{A}(n)$ of
  dimension $p^n$  with basis  $t_0, t_1,  \ldots, t_{p^n-1}$.
  Multiplication is defined by  
 \[t_i t_j = \binom {i+j}j t_{i+j} \text{  if  } i+j< p^n, \text{  and
   zero otherwise} \] 
and $ t_0$ is the identity.  The coalgebra structure is given by 
\[ \Delta(t_r) = \sum_{j=0}^r t_j \otimes t_{r-j} \text{  and  }  \epsilon(t_r)  = \delta_{0, r} .\]
The antipode  is given by $s(t_r) = (-1)^rt_r$.   \end{definition}

\begin{remark}  
As Alan Koch has pointed out to us, the divided power Hopf algebra
$\mathcal{A}(n)$ represents the group scheme given by the kernel of
the Frobenius homomorphism on the additive group of Witt vectors of
length $n$ over $K$.
\end{remark}  

Let $L = K(x)$ be a primitive purely inseparable field extension of
exponent $n$.  Let $\mathcal{A}(n)$ act on $L$ by
\begin{equation} \label{t-action} 
 t_r(x^s) = \binom sr x^{s-r} 
\end{equation}
(where $\binom sr = 0$ if  $r > s$). Then, as Sweedler
observes \cite[p.~215]{Sw69}, $L$ is an $ \mathcal{A}(n)$-module algebra,
$K$ is the field of constants  
\[L^{\mathcal{A}(n)} = \{ y \in L :  h(y) = \epsilon(h)y  \text{ for
  all } h  \in \mathcal{A}(n)\} \]
and $[\mathcal{A}(n):K] = [L:K] = p^n$.  By a theorem of Sweedler
 (\cite[Theorem 10.1.1]{Sw69}),  these conditions imply that the map from
 $L \otimes_K \mathcal{A}(n)$ to $\End_K(L)$ is bijective,  and hence
 $L$ is an $\mathcal{A}(n)$-Hopf Galois extension of $K$.  

 This example also shows up in \cite[Lemma 1.2,3]{AS69} and in dual
 form (that is, $L$ is an $\mathcal{A}(n)^*$-Galois object) in
 \cite[Example 4.11]{CS69}. 
 
 Evidently if $\mathcal{A}(n)$ acts on $L/K$, then the basis $\{ t_0,
 t_1, \ldots, t_{p^n-1}\}$ of $\mathcal{A}(n)$  defines a higher
 derivation of $L/K$.  So henceforth we denote $t_i$ by $D_i$.  As a
 $K$-vector space,  
 \[ \mathcal{A}(n) = K[D_0, D_1, \ldots D_{p^n-1}], \]
and we turn our attention now to
the structure of $\mathcal{A}(n)$ as an algebra.

\begin{proposition}\label{dpHa} As a $K$-algebra, 
\[ \mathcal{A}(n) = K[D_1, D_p, \ldots D_{p^{n-1}}]\cong K[T_0, T_1,
  \ldots T_{n-1}]/(T_0^p, T_1^p, \ldots, T_{n-1}^p]  \]
is an exponent $p$ truncated polynomial algebra over $K$.  
\end{proposition}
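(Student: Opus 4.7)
The plan is to exhibit an explicit surjection from the truncated polynomial algebra onto $\mathcal{A}(n)$ sending $T_i$ to $D_{p^i}$, and then count dimensions.

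First I would verify commutativity of $\mathcal{A}(n)$ directly from the multiplication rule $t_it_j=\binom{i+j}{j}t_{i+j}=\binom{i+j}{i}t_{i+j}=t_jt_i$. Next I would check that $D_{p^i}^p=0$ for each $0\leq i\leq n-1$. By iterating the multiplication rule, $(t_{p^i})^p$ equals the multinomial coefficient $\binom{p^{i+1}}{p^i,p^i,\ldots,p^i}$ (with $p$ copies of $p^i$) times $t_{p^{i+1}}$. By Kummer's theorem, the $p$-adic valuation of this multinomial coefficient is the number of carries when one adds $p$ copies of $p^i$ in base $p$; there is precisely one such carry (in position $i$), so the coefficient vanishes mod $p$, forcing $D_{p^i}^p=0$.

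The second key step is to show that every basis element $t_r$ of $\mathcal{A}(n)$ lies in the subalgebra generated by $D_1,D_p,\ldots,D_{p^{n-1}}$. Given $r=\sum_{i=0}^{n-1}r_{(i)}p^i$ with $0\leq r_{(i)}<p$, I would compute the product $\prod_{i=0}^{n-1}(t_{p^i})^{r_{(i)}}$; iterating the multiplication rule expresses it as $\binom{r}{\underbrace{p^0,\ldots,p^0}_{r_{(0)}},\ldots,\underbrace{p^{n-1},\ldots,p^{n-1}}_{r_{(n-1)}}}t_r$. Since $r_{(i)}<p$ for every $i$, adding $r_{(i)}$ copies of $p^i$ and then summing the disjoint results $r_{(i)}p^i$ involves no carries in base $p$, so Kummer's theorem again shows this multinomial coefficient is a unit in $K$. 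Solving for $t_r$ gives $t_r\in K[D_1,D_p,\ldots,D_{p^{n-1}}]$, and in particular the generators $D_1,D_p,\ldots,D_{p^{n-1}}$ generate $\mathcal{A}(n)$ as a $K$-algebra.

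Combining commutativity with the vanishing $D_{p^i}^p=0$ produces a well-defined surjective $K$-algebra homomorphism
\[
\varphi\colon K[T_0,T_1,\ldots,T_{n-1}]/(T_0^p,T_1^p,\ldots,T_{n-1}^p)\longrightarrow \mathcal{A}(n),\qquad T_i\mapsto D_{p^i}.
\]
The source has $K$-dimension $p^n$, as does $\mathcal{A}(n)$ by construction, so $\varphi$ is an isomorphism. The only mildly delicate step is the bookkeeping with multinomial coefficients modulo $p$; once those two appeals to Kummer's theorem are in hand, the rest follows by dimension count.
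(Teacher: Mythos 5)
Your proposal is correct and follows essentially the same route as the paper: both arguments show that each $D_{p^i}$ satisfies $D_{p^i}^p=0$ and that every $t_r$ is a unit multiple of the monomial $\prod_i D_{p^i}^{r_{(i)}}$, then conclude by comparing dimensions. The only cosmetic difference is that you control the relevant multinomial coefficients mod $p$ via Kummer's carry-counting theorem, whereas the paper does the same bookkeeping digit-by-digit with Lucas's Theorem (its Theorem \ref{lucas}), and you make the final surjection-plus-dimension-count step explicit where the paper leaves it implicit.
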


To show this, it is convenient to invoke
\begin{theorem}[Lucas's Theorem, 1878]  \label{lucas}
Let $p$ be prime, and let the integers $a, b \ge 0$ be written
$p$-adically: 
\[ \begin{aligned} a &= a_0 + a_1p + \ldots + a_rp^r\\
b &= b_0 + b_1p + \ldots + b_rp^r  \end{aligned}\]
where we may assume $b \le a$ (so that $b_r $ may be $0$) and  $0 \le a_i, b_i < p$ for all $i$.  Then
\[ \binom ab \equiv \binom {a_0}{b_0} \binom {a_1}{b_1} \cdots \binom {a_r}{b_r}  \pmod{p}. \]
\end{theorem}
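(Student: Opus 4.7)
The plan is to prove Lucas's theorem via the classical generating function argument, working with polynomials in $\mathbb{F}_p[X]$. The key observation is the ``freshman's dream'' in characteristic $p$: since $p \mid \binom{p}{j}$ for $1 \leq j \leq p-1$, we have the congruence $(1+X)^p \equiv 1+X^p \pmod{p}$ in $\mathbb{Z}[X]$. Iterating this identity $i$ times yields $(1+X)^{p^i} \equiv 1+X^{p^i} \pmod{p}$ for each $i \geq 0$.

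First I would write $a = \sum_{i=0}^r a_i p^i$ with $0 \leq a_i < p$, and expand
\[
  (1+X)^a = \prod_{i=0}^r (1+X)^{a_i p^i}.
\]
Using the freshman's dream congruence above, each factor satisfies $(1+X)^{a_i p^i} = \bigl((1+X)^{p^i}\bigr)^{a_i} \equiv (1+X^{p^i})^{a_i} \pmod p$. Multiplying these congruences gives
\[
  (1+X)^a \equiv \prod_{i=0}^r (1+X^{p^i})^{a_i}
             = \prod_{i=0}^r \sum_{c_i=0}^{a_i} \binom{a_i}{c_i} X^{c_i p^i}
             \pmod{p}.
\]

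Next I would compare coefficients of $X^b$ on both sides. On the left the coefficient is $\binom{a}{b}$ by the binomial theorem. On the right, a contribution to $X^b$ arises from a choice of exponents $c_i$ with $0 \leq c_i \leq a_i < p$ and $\sum c_i p^i = b$; since every such tuple $(c_i)$ with $0 \leq c_i < p$ is the unique base-$p$ expansion of $b$, the only contributing tuple is $c_i = b_i$ for all $i$, valid precisely when $b_i \leq a_i$ for all $i$ (which is consistent with $\binom{a_i}{b_i} = 0$ otherwise). The coefficient of $X^b$ on the right is therefore $\prod_{i=0}^r \binom{a_i}{b_i}$, yielding the desired congruence.

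The argument is short and encounters no real obstacle; the only delicate point is verifying that the base-$p$ representation of $b$ is forced upon us by the constraint $c_i < p$, so that no other term on the right contributes to $X^b$. This is exactly what makes the statement clean, and it is the reason the result is so useful in evaluating binomial coefficients modulo $p$, as needed subsequently in the analysis of the divided power Hopf algebra $\A(n)$.
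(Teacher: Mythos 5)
Your proof is correct, and it is essentially the argument the paper itself points to: the paper gives no proof of Lucas's Theorem but cites a reference for ``a nice proof using the Binomial Theorem modulo $p$,'' which is exactly your generating-function argument via $(1+X)^{p^i}\equiv 1+X^{p^i}\pmod{p}$ and comparison of coefficients of $X^b$. The delicate point you flag --- that uniqueness of the base-$p$ expansion forces $c_i=b_i$ as the only contributing tuple --- is handled correctly.
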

Here $\binom 00 = 1$ and $\binom ab = 0$ if $a < b$. 
For a nice proof of Lucas's Theorem using the Binomial Theorem modulo
$p$, see \cite{Rid13}.  

\begin{proof} [Proof of Proposition \ref{dpHa}] 

This is a matter of showing by induction (using Lucas's Theorem) that modulo $p$:  first,  
\[  k!D_{kp^r} = D_{p^r}^k \]
 for $1 \le k \le p$, and hence $ D_{p^r}^p = 0$ for all $r$; and second, 
\[ D_{a_0 + a_1p + \ldots a_rp^r} = D_{a_0}D_{a_1p} \cdots D_{a_rp^r}. \]
Hence 
\[  D_{a_0 + a_1p + \ldots a_rp^r}  = \frac {D_1^{a_0}}{a_0!} \cdot  \frac {D_p^{a_1}}{a_1!} \cdots  \frac {D_{p^r}^{a_r}}{a_r!} .\]
Since all the factorials are units modulo $p$, the result follows.
\end{proof}

\subsection{$\mathcal{A}(n)$-scaffolds on purely inseparable extensions}

Let $K$ be a local field with normalized valuation $v_K$ and uniformizing
parameter $\pi$. Let $L$ be a purely inseparable field
extension of exponent $n$ which is totally ramified. To see that $L/K$
is primitive, let $\nu$ be a uniformizing parameter for $L$. As $L/K$
is purely inseparable, we have $\nu^{p^n} \in K$. As $L/K$ is totally
ramified, it follows that $v_K(\nu^{p^n})=1$, so that $\nu^{p^{n-1}}
\not \in K$ and $L=K(\nu)$. Now that $L/K$ is primitive,
$\mathcal{A}(n)=K[D_1, D_p, \ldots, D_{p^{n-1}}]$, the divided power
Hopf algebra of dimension $p^n$, acts on $L/K$. Indeed, it can act
on $L/K$ in many ways.

Let $0 < b < p^n$ with $\gcd(b,p)=1$, and set $x=\nu^{-b}$. Then
$L=K(x)$ and $v_L(x)=-b$. We specify that $\mathcal{A}(n)$ acts on $L$
as in (\ref{t-action}), that is,
\begin{equation} \label{E: act}  
   D_{p^r}(x^a) = \binom a{p^r}x^{a-p^r} .
\end{equation}
By Lucas's Theorem,
\[ \binom a{p^r} =a_{(r)}\]
where $a_{(r)}$ is the $r$th digit  of the $p$-adic expansion of $a$.  
Note that this action depends on the choice of the generator $x$ for
$L/K$, and therefore depends on $b$. 

\begin{remark}\label{intuit-connect}
At this point, we make explicit the connection with the intuition of a
scaffold, presented in \S\ref{intro}. Let $X_i=x^{p^{n-i}}$,
and $\Psi_i=D_{p^{n-i}}$.  Then \eqref{E: act} and \eqref{deriv-exact}
agree where $0\leq a<p^n$ is expressed $p$-adically as both
$a=a_0+a_1p+\cdots +a_{n-1}p^{n-1}$ to be consistent with Theorem
\ref{lucas} and $a=a_{(0)}+a_{(1)}p+\cdots +a_{(n-1)}p^{n-1}$ to be
consistent with \eqref{first-p-adic}. 
\end{remark}

\begin{remark}\label{p-adic_convention}
In the present purely inseparable situation, the convention adopted by
\eqref{first-p-adic} where integers $0\leq a<p^n$ are expressed
$p$-adically as $a=\sum_{i=1}^na_{(n-i)}p^{n-i}$ (and thus necessarily
$X_i=x^{p^{n-i}}$ and $\Psi_i=D_{p^{n-i}}$) may seem awkward. So it
is worth reiterating that we adopted this convention because
scaffolds arose first in the setting of Galois extensions and in that
setting it is natural to label the $i$th ramification break with the
subscript $i$.
\end{remark}

To complete the proof of Theorem \ref{insep} we now define an
$\mathcal{A}(n)$-scaffold on $L/K$ with $b$ as its sole shift
parameter. This means, following Definition 2.3, that we need two sets
of elements: elements $\lambda_t \in L$ for all integers $t$ with
$v_L(\lambda_t) = t$, and elements $\Psi_k \in \mathcal{A}(n)$ for $1
\le k\le n$.  But since the shift parameters are all the same, we can
simplify notation.  From \eqref{eub}, we see that $\eub(s)=bs$.  Let
$a$ be an integer with $ab \equiv -1 \pmod{p^n}$.  Thus for
$t\in\mathbb{Z}$, $\eua(t)$ can be more easily understood as the least
non-negative residue of $at$ modulo $p^n$.  For each $t$ in
$\mathbb{Z}$, define $f_t$ by
\[ t = -b\eua(t) + p^n f_t .\]
Hence $f_t > 0$ for all $t>0$.  Expand $\eua(t)$ $p$-adically as $\eua(t) =
\eua(t)_{(0)} + \eua(t)_{(1)}p + \ldots + \eua(t)_{(n-1)}p^{n-1}$.  For the
$\lambda_t$ with $t\in\mathbb{Z}$, set
\[ \lambda_t = \frac {\pi^{f_t} x^{\eua(t)}}{\eua(t)_{(0)}!\eua(t)_{(1)}!\cdots
  \eua(t)_{(n-1)}!}=\pi^{f_t}\prod_{i=1}^{n}\frac{X_i^{\eua(t)_{(n-i)}}}{\eua(t)_{(n-i)}!},
\]  
where $X_i$ is as in Remark \ref{intuit-connect}.  Observe that $f_t$
was defined so that $v_L(\lambda_t) = -b\eua(t) + p^n f_t = t$, and if
$t_1 \equiv t_2 \pmod{p^n}$, then $\eua(t_1) = \eua(t_2)$ and
$\lambda_{t_1} \lambda_{t_2}^{-1} = \pi^{f_{t_1} - f_{t_2}}\in K$.  As
in Remark \ref{intuit-connect}, for $1\leq r \leq n$, set
 $$\Psi_r = D_{p^{n-r}}.$$ 
Now observe that 
\eqref{E: act}  together with Lucas's Theorem imply
$$ \Psi_r\lambda_t = \begin{cases}
      \lambda_{t + p^{n-r}b} &  \mbox{if } \eua(t)_{(n-r)} > 0, \\
       0 & \mbox{if } \eua(t)_{(n-r)} =  0. 
   \end{cases} $$
Based upon
Definition \ref{tol-scaffold}
this justifies our assertion that the intuition of a scaffold
yields a scaffold. It also proves.

\begin{proposition}The elements  $\{ \lambda_t\}_{t
  \in \mathbb{Z}}, \{\Psi_r \}_{0 \le r \le n}$ form an
  $\mathcal{A}(n)$-scaffold on $L$ of precision $\infty$.
\end{proposition}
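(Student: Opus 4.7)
The proof is a direct verification of the two clauses of Definition \ref{tol-scaffold} with $\tol=\infty$; the construction has been arranged so that essentially nothing remains beyond unwinding definitions and invoking Lucas's theorem at the right moment. First I would confirm clause (i): the choice $f_t=(t+b\eua(t))/p^n$ makes $v_L(\lambda_t)=-b\eua(t)+p^n f_t=t$ immediate from the definition of $\lambda_t$, while $t_1\equiv t_2\bmod p^n$ forces $\eua(t_1)=\eua(t_2)$ and hence $\lambda_{t_1}\lambda_{t_2}^{-1}=\pi^{f_{t_1}-f_{t_2}}\in K$. Also $\Psi_r\cdot 1=D_{p^{n-r}}(x^0)=0$ is immediate from \eqref{E: act}, since $\binom{0}{p^{n-r}}=0$ for $1\le r\le n$.

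The main content is the explicit computation of $\Psi_r\cdot\lambda_t$. Applying $D_{p^{n-r}}$ via \eqref{E: act} produces the factor $\binom{\eua(t)}{p^{n-r}}$; since $K$ has characteristic $p$, Lucas's theorem (Theorem \ref{lucas}) collapses this to $\eua(t)_{(n-r)}\in\{0,1,\ldots,p-1\}$. If $\eua(t)_{(n-r)}=0$, the product is identically zero in $L$ and $\Psi_r\cdot\lambda_t=0$, as required. If $\eua(t)_{(n-r)}\ge 1$, the surviving factor $\eua(t)_{(n-r)}$ converts $\eua(t)_{(n-r)}!$ in the denominator of $\lambda_t$ into $(\eua(t)_{(n-r)}-1)!$, leaves every other factorial in place, and turns the monomial $x^{\eua(t)}$ into $x^{\eua(t)-p^{n-r}}$.

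It remains to recognise this expression as $\lambda_{t+p^{n-r}b}$. Because $ab\equiv -1\bmod p^n$, we have $\eua(t+p^{n-r}b)\equiv\eua(t)-p^{n-r}\bmod p^n$; the hypothesis $\eua(t)_{(n-r)}\ge 1$ ensures that $\eua(t)-p^{n-r}$ already lies in $\Spn$ and differs from $\eua(t)$ only in its $(n-r)$th base-$p$ digit, which drops by one. A short computation then gives $f_{t+p^{n-r}b}=f_t$, so that the expression obtained from $\Psi_r\cdot\lambda_t$ coincides on the nose with $\lambda_{t+p^{n-r}b}$. The only real subtlety is the passage from the integer binomial coefficient to its mod-$p$ image via Lucas, but this is precisely why working with $\mathcal{A}(n)$ over a field of characteristic $p$ yields a scaffold of tolerance $\infty$ with all units $u_{r,t}$ equal to $1$.
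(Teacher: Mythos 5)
Your proof is correct and follows essentially the same route as the paper: a direct verification of both clauses of Definition \ref{tol-scaffold}, with the key step being the computation of $\Psi_r\cdot\lambda_t$ via \eqref{E: act} and Lucas's Theorem, together with the observations that $\eua(t+p^{n-r}b)=\eua(t)-p^{n-r}$ and $f_{t+p^{n-r}b}=f_t$ when $\eua(t)_{(n-r)}\geq 1$. The paper states this more tersely, but the underlying calculation is identical to yours.
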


\appendix 

\section{Comparison of definitions of scaffold}

\subsection{An alternative characterization of $A$-scaffolds}

Let $K$ be a local field with residue characteristic $p$,
let $L/K$ be a totally ramified field extension of degree $p^n$, and
let $A$ be a $K$-algebra of dimension $p^n$ with a $K$-linear
action on $L$. We assume that we are given a family of elements $\Psi_1, \ldots,
\Psi_n$ of $A$. For $s \in \Spn$, we then have the set
$\Upsilon^{(s)}$ of monomials in the $\Psi_i$, as defined before
(\ref{def-Psi}). We also suppose we are given functions $\eub$,
$\eua$ corresponding to a family of shift parameters $b_1, \ldots,
b_n$, all relatively prime to $p$. We consider the following
conditions on the $\Psi_i$:
\begin{equation} \label{A-aug}
 \Psi_i \cdot 1 =0  \mbox{ for each } i;
\end{equation}
\begin{equation} \label{A-eub-shift}
  v_L(\Psi \cdot \rho) = v_L(\rho)+\eub(s) \mbox{ for all } \Psi \in
  \Upsilon^{(s)} \mbox{ and }   s \in \Spn,  
\end{equation}
for some given $\rho \in L\backslash\{0\}$;
\begin{equation} \label{A-p-power}
  v_L( \Psi_i^p \cdot \alpha) > v_L(\alpha)+b_ip^{n-i+1} 
 \mbox{ for all } i \mbox{ and all } \alpha \in L\backslash\{0\}; 
\end{equation}
and the stronger form of (\ref{A-p-power}), 
\begin{equation} \label{A-p-triv}
  \Psi_i^p = 0  \mbox{ for all } i.
\end{equation}
Let $\{\lambda_t\}_{t \in \bZ}$ be any family of elements of $L$
  satisfying the conditions of Definition \ref{tol-scaffold}(i):
  $v_L(\lambda_t)=t$ for all $t$, and $\lambda_{t_1}
  \lambda_{t_2}^{-1} \in K$ whenever $t_1 \equiv t_2 \pmod{p^n}$. 

\begin{theorem} \label{app-thm} 
\ \\ \noindent (i) Suppose that the $\Psi_i$ satisfy (\ref{A-aug}) and
(\ref{A-p-power}), and there is some $\rho$ for which
(\ref{A-eub-shift}) holds. Then the $\lambda_t$ and the $\Psi_i$ form
an $A$-scaffold of precision $\tol =1$ on $L$ in the sense of Definition
\ref{tol-scaffold}, and its shift parameters are $b_1, \ldots, b_n$.
Moreover $\eua(v_L(\rho))=p^n-1$.

\noindent (ii) If, furthermore, (\ref{A-p-triv}) holds and 
$A$ is commutative, then the $\lambda_t$ may
be chosen so that the $A$-scaffold has precision $\infty$.

\noindent (iii) Conversely, if the $\lambda_t$ and the $\Psi_i$
form an $A$-scaffold of some precision $\tol \geq 1$ in the
sense of Definition \ref{tol-scaffold}, then (\ref{A-aug}) and
(\ref{A-p-power}) hold, and (\ref{A-eub-shift}) holds for any $\rho
\in L$ with $\eua(v_L(\rho))=p^n-1$.
\end{theorem}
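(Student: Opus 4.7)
I would handle the three parts in the order (iii), (i), (ii). For (iii), condition (\ref{A-aug}) is immediate from Definition \ref{tol-scaffold}(ii). For (\ref{A-eub-shift}), pick any $\rho$ with $v_L(\rho)=b'$ and $\eua(b')=p^n-1$, and expand $\rho=c_{b'}\lambda_{b'}+\sum_{t>b'}c_t\lambda_t$ in the $\euO_K$-basis $\{\lambda_t:t\in\Spn(b')\}$ with $c_{b'}\in\euO_K^\times$, $c_t\in\euO_K$; since every $s\in\Spn$ satisfies $s\preceq\eua(b')=p^n-1$, equation (\ref{part-graded-val}) gives $v_L(\Psi\lambda_{b'})=b'+\eub(s)$ exactly for $\Psi\in\Upsilon^{(s)}$, while the $t>b'$ contributions have strictly larger valuation. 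For (\ref{A-p-power}), expand an arbitrary $\alpha$ analogously in a $\lambda_t$-basis and write $\Psi_i^p=\Psi_i\cdot\Psi_i^{p-1}$: two applications of (\ref{part-graded-val}) (splitting on whether $(p-1)p^{n-i}\preceq\eua(t)$ and whether $p^{n-i}\preceq\eua(t+(p-1)p^{n-i}b_i)$) force at least one step into the ``otherwise'' branch, which contributes an extra $+\tol$, and hence $v_L(\Psi_i^p\alpha)\geq v_L(\alpha)+p^{n-i+1}b_i+\tol$.

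For (i), first observe that the valuations $v_L(\Psi^{(s)}\rho)=b'+\eub(s)$ are distinct modulo $p^n$ (since $r\circ\eub$ is a bijection on $\Spn$), so $\{\Psi^{(s)}\rho\}_{s\in\Spn}$ is a $K$-basis of $L$, and the induced $K$-isomorphism $A\to L$, $\Psi\mapsto\Psi\rho$, forces $\{\Psi^{(s)}\}$ to be a $K$-basis of $A$. The assertion $\eua(b')=p^n-1$ is then proved by contradiction: if some digit $(\eua(b'))_{(n-i_0)}<p-1$, the element $\Psi^{(\eua(b'))}\rho$ has valuation divisible by $p^n$, say $Np^n$, so it decomposes as $c\pi^N+\beta$ with $c\in\euO_K^\times$ and $v_L(\beta)\geq Np^n+1$; since (\ref{A-aug}) and $K$-linearity kill $c\pi^N$, we get $\Psi_{i_0}\Psi^{(\eua(b'))}\rho=\Psi_{i_0}\beta$, while (\ref{A-eub-shift}) pins the left side at valuation exactly $Np^n+p^{n-i_0}b_{i_0}$; expanding $\beta$ in the $\Psi^{(s)}\rho$-basis and applying (\ref{A-p-power}) to the $(s)_{(n-i_0)}=p-1$ contributions yields a strictly larger lower bound on $v_L(\Psi_{i_0}\beta)$, a contradiction. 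With $\eua(b')=p^n-1$ in hand, define $\tilde\lambda_t=\pi^{k_t}\Psi^{(s_t)}\rho$ with $s_t=p^n-1-\eua(t)$ and $k_t=(t-b'-\eub(s_t))/p^n$; these satisfy Definition \ref{tol-scaffold}(i), and any permissible $\lambda_t$ factors as $c_t(1+w_t)\tilde\lambda_t$ with $c_t\in\euO_K^\times$, $w_t\in\euP_L$ (using $\kappa_L=\kappa$), reducing the verification to the $\tilde\lambda_t$ case up to replacing each unit by an $\euO_K^\times$-conjugate.

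The $\tilde\lambda_t$-verification splits by $\eua(t)_{(n-i)}$. When $\eua(t)_{(n-i)}\geq 1$ (so $(s_t)_{(n-i)}\leq p-2$), both $\Psi_i\Psi^{(s_t)}$ and $\Psi^{(s_t+p^{n-i})}$ lie in $\Upsilon^{(s_t+p^{n-i})}$, so (\ref{A-eub-shift}) gives them equal valuations on $\rho$, and their ratio in $\kappa_L=\kappa$ lifts to the required $u_{i,t}\in\euO_K^\times$. When $\eua(t)_{(n-i)}=0$ we must show the strict bound $v_L(\Psi_i\Psi^{(s_t)}\rho)>b'+\eub(s_t)+p^{n-i}b_i$: writing $\Psi^{(s_t)}=\Theta_+\Psi_i^{p-1}\Theta_-$, in the commutative case one simply rearranges to $\Theta_+\Theta_-\Psi_i^p$ and combines (\ref{A-eub-shift}) on $\Theta_-\rho$ with (\ref{A-p-power}) on $\Psi_i^p$; in the non-commutative setting of (i) the same bound is obtained by induction on $|s_{\Theta_+}|$, peeling off the leftmost factor $\Psi_{j_0}$ of $\Theta_+$ and comparing the two orderings $\Psi_i\Psi_{j_0}$ and $\Psi_{j_0}\Psi_i$ within $\Upsilon^{(p^{n-i}+p^{n-j_0})}$ via (\ref{A-eub-shift}), with base case $\Theta_+=1$ direct from (\ref{A-p-power}). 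For (ii), commutativity plus (\ref{A-p-triv}) give $\Psi_i\Psi^{(s)}=\Theta_+\Theta_-\Psi_i^p=0$ when $(s)_{(n-i)}=p-1$ and $\Psi_i\Psi^{(s)}=\Psi^{(s+p^{n-i})}$ otherwise, so choosing $\lambda_t=\tilde\lambda_t$ makes Definition \ref{tol-scaffold}(ii) hold with equality and $u_{i,t}=1$, yielding tolerance $\infty$.

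The main obstacle is the non-commutative Case 2 verification in part (i): without the freedom to rearrange $\Psi_i$ across the factors of $\Theta_+$, the required strict inequality must be extracted through a careful induction that couples (\ref{A-p-power}) with repeated applications of (\ref{A-eub-shift}) across different orderings of the same multiset. A related subtlety is the Step 2 contradiction argument for $\eua(b')=p^n-1$, where the necessary lower bound on $v_L(\Psi_{i_0}\beta)$ again requires bounding $v_L(\Psi_{i_0}\Psi^{(s)}\rho)$ for $(s)_{(n-i_0)}=p-1$, threading (\ref{A-aug}), (\ref{A-eub-shift}), and (\ref{A-p-power}) together through delicate valuation bookkeeping.
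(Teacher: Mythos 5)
Your overall architecture matches the paper's, and parts (ii) and (iii) are essentially correct (the paper establishes (\ref{A-p-power}) in (iii) by iterating the single-step valuation statement $p$ times, which is the same computation as your split of $\Psi_i^p$ into $\Psi_i\cdot\Psi_i^{p-1}$). But the step you yourself flag as the main obstacle --- the strict inequality $v_L(\Psi_i\cdot\Psi^{(s)}\rho) > v_L(\rho)+\eub(s)+p^{n-i}b_i$ when $s_{(n-i)}=p-1$, in the non-commutative setting --- is not established by your proposed induction, and the gap propagates into your Step 2 (the proof that $\eua(v_L(\rho))=p^n-1$) and Step 3 (the reduction from general $\lambda_t$ to $\tilde\lambda_t$, which needs $\Psi_i$ to raise the valuation of an \emph{arbitrary} element by at least $p^{n-i}b_i$). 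The problem is this: the monomial $\Psi_i\Psi^{(s)}$ with $s_{(n-i)}=p-1$ has total $\Psi_i$-exponent $p$, so it lies in no set $\Upsilon^{(u)}$ under \emph{any} ordering of its factors, and (\ref{A-eub-shift}) is therefore silent about it. Comparing the orderings $\Psi_i\Psi_{j_0}$ and $\Psi_{j_0}\Psi_i$ via (\ref{A-eub-shift}) only tells you that these two monomials have equal valuation when applied to $\rho$ itself; it gives no relation between $\Psi_i\Psi_{j_0}\cdot\gamma$ and $\Psi_{j_0}\Psi_i\cdot\gamma$ for $\gamma=\Theta_+'\Psi_i^{p-1}\Theta_-\cdot\rho$, which is what your inductive step needs. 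The only hypothesis that sees $p$-th powers is (\ref{A-p-power}), and it requires the $p$ factors of $\Psi_i$ to act as a contiguous leftmost block on an element of $L$; in $\Psi_i\Theta_+\Psi_i^{p-1}\Theta_-$ with $\Theta_+\neq 1$ they are not contiguous, and nothing in the hypotheses lets you make them so.

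The paper's fix is a different choice of basis: for each fixed $i$ it replaces $\Psi^{(s)}$ by the reordered monomial $\Psi_*^{(s)}=\Psi_i^{s_{(n-i)}}\Psi_n^{s_{(0)}}\cdots$ with all copies of $\Psi_i$ moved to the far left. Since (\ref{A-eub-shift}) is assumed for \emph{every} element of $\Upsilon^{(s)}$, the elements $\Psi_*^{(s)}\cdot\rho$ still have valuation $v_L(\rho)+\eub(s)$ and form a $K$-basis of $L$; and now left multiplication by $\Psi_i$ either stays inside the $\Upsilon$-family (when $s_{(n-i)}<p-1$) or produces literally $\Psi_i^p\Psi_*^{(s'')}$ with $\Psi_i^p$ acting last, to which (\ref{A-p-power}) applies with $\alpha=\Psi_*^{(s'')}\cdot\rho$. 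Expanding an arbitrary $\alpha$ in this $i$-adapted basis and isolating the unique term of minimal valuation then yields the uniform statement that $\Psi_i$ raises valuations by at least $p^{n-i}b_i$, with equality governed by the $(n-i)$-digit of that minimal term --- which is exactly what your Steps 2, 3 and 4 require. You should import this device; without it the induction as written does not close.
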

\begin{proof}
(i) Since (\ref{A-aug}) holds by hypothesis, we will have an
  $A$-scaffold of precision $1$ provided that the
  congruence in Definition \ref{tol-scaffold}(ii) holds with $\tol=1$
  and some choice of the units $u_{i,t}$. This will be the case if, for
  each $t \in \bZ$ and each $i$, we have 
 \begin{equation} \label{A-val-shift-final}
   v_L(\Psi_i \cdot \lambda_t) \quad  
    \begin{cases} \quad = t + p^{n-i}b_i &\mbox{ if } \eua(t)_{(n-i)} \geq 1,\\ 
                 \quad > t + p^{n-i}b_i &\mbox{ if } \eua(t)_{(n-i)} =
                 0. \end{cases} 
\end{equation} 

Fix $i$, and, for each $s \in \Spn$, define $\Psi_*^{(s)} \in
  \Upsilon^{(s)}$ by
$$ \Psi_*^{(s)} = \Psi_i^{s_{(n-i)}} \Psi_{n}^{s_{(0)}} \ldots
\Psi_{i-1}^{s_{(n-i-1)}} \Psi_{i+1}^{s_{(n-i+1)}} 
\ldots \Psi_1^{s_{(n-1)}}. $$ 
Thus $\Psi_*^{(s)}$ is obtained
from $\Psi^{(s)}$ by bringing all the factors $\Psi_i$ to the left (so
in particular $\Psi_*^{(s)} = \Psi^{(s)}$ if $i=n$). From
(\ref{A-eub-shift}) we have $v_L(\Psi_*^{(s)} \cdot \rho)=v_L(\rho)+\eub(s)$.
Thus $\{ v_L(\Psi_*^{(s)} \cdot \rho) : s \in \Spn\}$ is a complete set of
residues mod $p^n$, and hence $\{ \Psi_*^{(s)} \cdot \rho : s \in \Spn\}$ is
a $K$-basis for $L$. 

Now fix $s$ as well. If $s_{(n-i)}<p-1$ then $\Psi_i \Psi_*^{(s)} \in
\Upsilon^{(s')}$ where $s'=s+p^{n-i}$, so that
$s'_{(n-i)}=s_{(n-i)}+1$ and $s'_{(n-j)}=s_{(n-j)}$ 
for $j \neq i$. Thus, from (\ref{A-eub-shift}), 
\begin{equation} \label{Psi-star1}
   v_L(\Psi_i \Psi_*^{(s)}\cdot \rho) = v_P(\rho) + \eub(s')
   = v_L(\Psi_*^{(s)} \cdot \rho) +b_i p^{n-i} 
       \mbox{ if } s_{(n-i)}<p-1. 
\end{equation}
On the other hand, if $s_{(n-i)}=p-1$ then $\Psi_i
\Psi_*^{(s)}=\Psi_i^p \Psi_*^{(s'')}$ where $s''=s-(p-1)p^{n-i}$, so
that $s''_{(n-i)}=0$ and $s''_{(n-j)}=s_{(n-j)}$ for $j \neq i$.  
Then $v_L(\Psi_i \Psi_*^{(s)}\cdot\rho) = 
 v_L(\Psi_i^p \Psi_*^{(s'')} \cdot \rho) > v_L(\Psi_*^{(s'')} \cdot \rho)
 +b_i p^{n-i+1}$ by (\ref{A-p-power}). But $v_L(\Psi_*^{(s'')}\cdot\rho) = 
v_L(\rho)+\eub(s'')=v_L(\rho) + \eub(s) - b_i (p-1) p^{n-i}$ by
(\ref{A-eub-shift}). Hence 
\begin{equation} \label{Psi-star2}
  v_L(\Psi_i \Psi_*^{(s)}\cdot\rho)  > v_L(\Psi_*^{(s)}\rho) +b_i p^{n-i} 
       \mbox{ if } s_{(n-i)}=p-1.
\end{equation}

Now let $\alpha \neq 0$ be an arbitrary element of $L$ with
$v_L(\alpha)=t$. Then we may write $\alpha = \sum_{s \in \Spn} x_s
\Psi_*^{(s)} \cdot \rho$ with the $x_s \in K$. The sum contains a
unique term of minimal valuation; let this occur at $s=s'$. Then
$t=p^n v_K(x_{s'})+v_L(\rho)+\eub(s')$. Applying (\ref{Psi-star1}) or
(\ref{Psi-star2}) to each term in the sum separately, we obtain
\begin{equation} \label{A-shift-alpha}
   v_L(\Psi_i \cdot \alpha) \quad  
    \begin{cases} \quad = t + p^{n-i}b_i &\mbox{ if } s'_{(n-i)} <p-1,\\ 
                 \quad > t + p^{n-i}b_i &\mbox{ if } s'_{(n-i)} =p-1.
        \end{cases} 
\end{equation} 

Before completing the proof of (\ref{A-val-shift-final}), we consider
$v_L(\rho)$. There is some $s \in \Spn$ for which $v_L(\Psi_*^{(s)} \cdot
\rho) \equiv 0 \pmod{p^n}$. By (\ref{A-eub-shift}), $s$ is independent of $i$. 
We may write $\Psi_*^{(s)} \cdot \rho = x + \beta$ where
$x \in K$ and $v_L(\beta)>v_L(\Psi_*^{(s)} \rho)$. Using (\ref{A-aug})
and (\ref{A-shift-alpha}), we therefore have 
$$ v_L(\Psi_i \Psi_*^{(s)} \cdot \rho) = v_L(\Psi_i \cdot \beta) > 
                   v_L(\Psi_*^{(s)} \cdot \rho) + p^{n-i}b_i . $$
Comparing with (\ref{A-shift-alpha}), we see that we must have $s_{(n-i)}=p-1$. 
This holds for each $i$, so $s=p^n-1$. Thus, by the choice of $s$, we
have $v_L(\rho) \equiv -\eub(p^n-1) \pmod{p^n}$, or equivalently,
$\eua(v_L(\rho))=p^n-1$. 

Recall that in (\ref{A-shift-alpha}) we have $t=v_L(\alpha)  \equiv v_L(\rho)
+\eub(s') \pmod{p^n}$. Thus $t \equiv
-\eub(p^n-1)+\eub(s')=-\eub(p^n-1-s')$. Hence
$\eua(t)=p^n-1-s'$, so the condition $s'_{(n-i)} <p-1$ in
(\ref{A-shift-alpha}) is equivalent to $\eua(t)_{(n-i)} \geq 1$. 
Now (\ref{A-val-shift-final}) follows on applying
(\ref{A-shift-alpha}) to $\alpha=\lambda_t$. 

(ii) Fix a uniformizing element $\pi$ of $K$.  
Given $t \in \bZ$, we choose $\lambda_t= \pi^{f} \Psi^{(s)} \cdot
\rho$ where $p^n f + 
v_L(\rho)+\eub(s)=t$.
 Then $v_L(\lambda_t)=t$, and $\lambda_{t_1}
  \lambda_{t_2}^{-1} = \pi^{(t_1-t_2)/p^n} \in K$ when $t_1 \equiv t_2
  \pmod{p^n}$. Also, 
as we have shown above, $\eua(t)=p^n-1-s$, so that $\eua(t)_{(n-i)} \geq 1$ if
and only if $s_{n-i)}<p-1$.  
As $A$ is commutative and $\Psi_i^p=0$, we have $\Psi_i \Psi^{(s)}
\cdot \rho =0$ if 
  $s_{(n-i)}=p-1$, so that $\Psi_i\cdot  \lambda_t=0$. On the other hand, if
    $s_{(n-i)} \neq p-1$ then $\Psi_i \cdot \lambda_t = \lambda_{t+b_i
      p^{n-i}}$. Thus the congruence in Definition
    \ref{tol-scaffold}(ii) becomes an equality (with $u_{i,t}=1$ for
    all $i$ and $t$). 

(iii) Since we have an $A$-scaffold in the sense of Definition
    \ref{tol-scaffold}, (\ref{A-aug}) holds. Also,  from
    (\ref{part-graded-val}), for any $s \in \Spn$ and
any $\Psi \in \Upsilon^{(s)}$ we have
\begin{equation} \label{A-psi-lambda}
   v_L(\Psi \cdot\lambda_t) \quad  
    \begin{cases} \quad = t + \eub(s) &\mbox{ if } s \preceq \eua(t),\\ 
                 \quad > t + \eub(s) &\mbox{ otherwise}.
        \end{cases} 
\end{equation}
For an arbitrary $\alpha \in L$ with $v_L(\alpha)=t$, we may write
$\alpha = u \lambda_t + \sum_{j=1}^{p^n-1} y_j \lambda_{t+j}$ for some
$u \in \euO_K^\times$ and $y_j \in \euO_K$. Applying
(\ref{A-psi-lambda}) to each term, we find that (\ref{A-psi-lambda})
still holds if we replace $\lambda_t$ by $\alpha$. In particular,
taking $\Psi=\Psi_i$, we have
\begin{equation} \label{A-psi-alpha}
   v_L(\Psi_i \cdot \alpha) \quad  
    \begin{cases} \quad = t + b_i p^{n-i} &\mbox{ if } \eua(t)_{(n-i)}>0,\\ 
                 \quad > t + p^{n-i}b_i &\mbox{ otherwise.}
        \end{cases} 
\end{equation}
Moreover, writing $t'=v_L(\Psi_i \cdot \alpha)$, we have
$\eua(t')_{(n-i)}=\eua(t)_{(n-i)}-1$ if $\eua(t)_{(n-i)}>0$. Repeating
this argument $p$ times, we obtain (\ref{A-p-power}). Finally, for
any $\rho$ with 
$\eua(v_L(\rho))=p^n-1$, (\ref{A-eub-shift}) follows inductively from 
(\ref{A-psi-alpha}).
\end{proof}

\subsection{Galois scaffolds in previous papers}

We now use Theorem \ref{app-thm} to explain how the $A$-scaffolds of
this paper are related to the Galois scaffolds of the earlier papers
\cite{elder:scaffold,elder:sharp-crit,byott:scaffold}. There we
considered only abelian extensions $L/K$ in characteristic $p$; 
the extensions in \cite{elder:scaffold,byott:scaffold} were elementary
abelian of arbitrary rank, and those in \cite{elder:sharp-crit} were
elementary abelian or cyclic of degree $p^2$. The algebra $A$ 
acting on $L$ was always the group algebra $A=K[G]$ with
$G=\Gal(L/K)$; in this setting, (\ref{A-aug}) simply
says that the $\Psi_i$ lie in the augmentation ideal of $K[G]$.

The definition of Galois scaffold varies slightly between these
papers, and the conditions explicitly required are a little less
restrictive than those of this paper. The Galois scaffolds constructed
turn out to satisfy supplementary conditions which were used
in obtaining results on Galois module structure. For the reader's
convenience, the role of the different conditions in the various
papers is summarized in Table 2. (The conditions not already mentioned
are introduced below.)

\begin{table} \label{scaff-props}  
\centerline{ 
\begin{tabular}{l | l | l} 
   Paper  &  Explicit in  &  Used for Galois \\   
          & definition    & module structure \\ \hline
\cite{elder:scaffold} & (\ref{all-res})  & \\ 
\cite{elder:sharp-crit} & (\ref{A-aug}), (\ref{all-res}),
(\ref{j-shift})   & 
  (\ref{A-aug}), (\ref{A-eub-shift}), (\ref{A-p-power}) \\
\cite{byott:scaffold} & (\ref{A-aug}), (\ref{all-res}), (\ref{j-shift}),
& (\ref{A-aug}), (\ref{A-eub-shift}), (\ref{A-p-triv}) 
\end{tabular}
}  
\vskip5mm

\caption{Properties of Galois scaffolds.} 
\end{table}

\subsubsection{The paper \cite{elder:scaffold}}
Galois scaffolds first appeared in \cite{elder:scaffold}, where they
were presented as a strengthening of the valuation criterion. Let $K$ 
be a local field of residue characteristic $p>0$, and
let $L/K$ be a totally ramified Galois extension of degree $p^n$ with
Galois group $G=\Gal(L/K)$. We say that $L/K$ satisfies the valuation
criterion if there exists $c \in \bZ$ such that $L=K[G] \cdot \rho$ for every
$\rho \in L$ with $v_L(\rho)=c$. In \cite{elder:scaffold}, $L/K$ was
said to have a Galois scaffold if there exist $c \in \bZ$ and elements
$\Psi_1, \ldots, \Psi_n \in K[G]$ such that, for every $\rho \in L$
with $v_L(\rho)=c$, the following condition holds: 
\begin{equation} \label{all-res}
\left\{ v_L\left(\Psi^{(s)} \cdot \rho\right)  
 :  s \in \Spn \right \} \mbox{ is a complete set of residues } p^n. 
\end{equation}
Since $L/K$ is totally ramified,
(\ref{all-res}) implies the valuation criterion for $L/K$.

For the Galois scaffolds actually constructed in \cite{elder:scaffold},
the conditions (\ref{A-aug}) and (\ref{A-eub-shift}) hold, where the
shift parameters $b_i$ used to define $\eub$ are the (lower) ramification breaks
of $L/K$,  
and $\rho$ is any element of $L$ with $v_L(\rho) \equiv b_n
\pmod{p^n}$. (Indeed, for the near one-dimensional extensions
considered in \cite{elder:scaffold}, the $b_i$ are all congruent mod $p^n$, so
$\eub(s) \equiv b_ns \pmod{p^n}$.) Moreover,
(\ref{A-p-triv}) holds, and Theorem \ref{app-thm}(i)(ii)
shows that the Galois scaffolds in \cite{elder:scaffold} are
$K[G]$-scaffolds of precision $\infty$ in the sense of this paper. 

\subsubsection{The paper \cite{byott:scaffold}}
In \cite{byott:scaffold} (which was in fact written before
\cite{elder:sharp-crit}), the definition of Galois scaffold was 
refined to require (\ref{A-aug}) explicitly, and also to require 
the uniformity condition 
\begin{equation} \label{j-shift} 
 v_L(\Psi_i^j \cdot \rho)-v_L(\rho) = j \cdot (v_L(\Psi_i
 \cdot\rho')-v_L(\rho') )  
 \end{equation}
whenever $0 \leq j \leq p-1$ and $v_L(\rho)$, $v_L(\rho') \equiv c
\pmod{p^n}$. Here, as above, $c$ is the integer occurring in the valuation
criterion. Note that (\ref{j-shift}) makes no explicit mention of the
ramification breaks $b_i$. If we set
$$ a_i = v_L(\Psi_i \cdot \rho) -v_L( \rho), $$
then (\ref{j-shift}) means that $a_i$ is independent of the choice of
$\rho$ with $v_L(\rho) \equiv c \pmod{p^n}$, and that 
\begin{equation} \label{a-shift}
   v_L(\Psi_i^j \cdot \rho) = v_L(\rho) + ja_i \mbox{ if } 0\leq j \leq p-1
  \mbox{ and } v_L(\rho)\equiv c \pmod{p^n}.  
\end{equation}
Moreover, if (\ref{A-eub-shift}) holds for the function $\eub$ given
by some shift parameters $b_1, \ldots, b_n$, then (\ref{a-shift}) holds for
$a_i=p^{n-i} b_i$ and any $c \equiv -\eub(p^n-1) \pmod{p^n}$.  

In view of (\ref{A-eub-shift}), it is reasonable to replace
(\ref{j-shift}) by 
\begin{equation} \label{a-expand}
                v_L\left( \Psi^{(s)} \cdot \rho\right) 
        = v_L(\rho)
        + \sum_{i=1}^n s_{(n-i)} a_i \mbox{ for all } s \in \Spn,
\end{equation}
where again $\rho$ is any element of $L$ with $v_L(\rho) \equiv c
\pmod{p^n}$. Now if (\ref{a-expand}) holds for some integers $a_i$,
then, by Proposition \ref{B-function} below, (\ref{all-res}) is
equivalent to the condition that (possibly after renumbering the
$\Psi_i$ and the $a_i$) there are integers $b_1, \ldots, b_n$, all
relatively prime to $p$, such that $a_i=p^{n-i}b_i$. If we use these
to define the function $\eub$, then, in the case of an abelian
extension, (\ref{a-expand}) is equivalent to (\ref{A-eub-shift}). We
may therefore regard (\ref{A-eub-shift}) as a natural strengthening of
(\ref{a-expand}), and hence of (\ref{j-shift}). 

The extensions considered in \cite{byott:scaffold} are the near
one-dimensional extensions constructed in \cite{elder:scaffold}, and
the Galois scaffolds used are those of that paper. As explained above,
they satisfy (\ref{A-aug}), (\ref{A-eub-shift}) and
(\ref{A-p-triv}). These properties were used in \cite{byott:scaffold}
to investigate the Galois module structure of the valuation rings.

\subsubsection{The paper \cite{elder:sharp-crit}}
So that we focus on those results in \cite{elder:sharp-crit} which
are in neither \cite{elder:scaffold} nor \cite{byott:scaffold}, we
restrict our discussion here to cyclic extensions of degree $p^2$. In
any case, \cite{elder:sharp-crit} used the same definition of Galois
scaffold as \cite{byott:scaffold}. The Galois scaffolds considered in
\cite{elder:sharp-crit} satisfy (\ref{A-aug}) and
(\ref{A-eub-shift}). The cyclic Galois scaffolds 
satisfy $\Psi_1^p=\Psi_2$, $\Psi_2^p=0$, and \eqref{A-p-power} holds
since $b_2>p^2 b_1$. Thus again they are $K[G]$-scaffolds of some
precision $\tol \geq 1$. (In fact $\tol = b_2-pb_1$.) These properties
are used in \cite{elder:sharp-crit} to investigate the Galois module
structure of the valuation ring in cyclic extensions of degree $p^2$
admitting a Galois scaffold.

\subsection{An alternative form of the function $\eub$}

In the above discussion, we needed the following result:

\begin{proposition}\label{B-function}
Given $a_i\in\bZ$, let $\eub':\mathbb{S}_p^n\longrightarrow\bZ$ be
defined by
$$\eub'(x_1,\ldots ,x_n)=\sum_{i=1}^na_ix_i.$$
Let $r:\bZ\longrightarrow\Spn$
be given by $r(a)\equiv a\pmod{p^n}$, and let $v$ denote the normalized valuation on the $p$-adic rationals.
Then the function $r\circ \eub' : \Spn \to \Spn$ is bijective
if and only if, after relabelling if necessary,
$v(a_i)=n-i$ for $1\leq i\leq n$.
\end{proposition}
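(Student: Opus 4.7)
I will prove the two directions separately. For sufficiency, assume after relabelling that $v(a_i)=n-i$ and write $a_i=p^{n-i}b_i$ with $\gcd(b_i,p)=1$. Since $|\Sp^n|=p^n=|\Spn|$, it suffices to prove injectivity of $r\circ\eub'$. If $\eub'(\vec x)\equiv \eub'(\vec x')\pmod{p^n}$, setting $y_i=x_i-x_i'\in\{-(p-1),\ldots,p-1\}$ gives $\sum_i p^{n-i}b_iy_i\equiv 0\pmod{p^n}$. Reducing modulo $p$ extracts only the $i=n$ term, forcing $y_n=0$; dividing through by $p$ and iterating extracts $y_{n-1},\ldots,y_1$ in succession.

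For necessity I will use the characters of $\mathbb{Z}/p^n\mathbb{Z}$, parameterized by $\chi_c(y)=e^{2\pi i cy/p^n}$ for $c\in\Spn$. Bijectivity of $r\circ\eub'$ implies that for every nontrivial $\chi_c$,
\[ 0=\sum_{y\in\Spn}\chi_c(y)=\sum_{\vec x\in\Sp^n}\chi_c(\eub'(\vec x))=\prod_{i=1}^n\sum_{x=0}^{p-1}\chi_c(a_i)^x. \]
Some factor must vanish, and since $\sum_{x=0}^{p-1}\omega^x=0$ precisely when $\omega$ is a nontrivial $p$th root of unity, this translates to: for every nonzero $c\in\Spn$, some $i$ satisfies $\chi_c(a_i)$ has order $p$, equivalently $v(ca_i)=n-1$.

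Writing $w=v(c)$ and $v_i=v(a_i)$, the condition becomes: for each $w\in\{0,\ldots,n-1\}$, some $v_i$ equals $n-1-w$. Hence the multiset $\{v_1,\ldots,v_n\}$ contains $\{0,\ldots,n-1\}$, and since both have $n$ elements they must coincide as multisets; relabelling so the $v_i$ decrease then yields $v(a_i)=n-i$. The main obstacle is the necessity direction, and within it the crux is recognising that a geometric sum of $p$th roots of unity vanishes exactly for primitive $p$th roots; once this is in hand, the result falls out by varying $c$ across valuation classes.
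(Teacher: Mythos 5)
Your proof is correct, but the necessity direction takes a genuinely different route from the paper's. The paper proves the converse by induction on $n$: it first isolates some $a_i$ of valuation $0$ (relabelled $a_n$), then runs a secondary induction showing that every multiple $kpa_n$ of $p$ in $\Spn$ is represented with $x_n=0$, deduces that $a_1,\dots,a_{n-1}$ are all divisible by $p$, and applies the inductive hypothesis to the $a_i/p$. You instead use orthogonality of characters of $\bZ/p^n\bZ$: bijectivity forces $\prod_{i=1}^n\sum_{x=0}^{p-1}\chi_c(a_i)^x=0$ for every nontrivial $\chi_c$, the geometric factor $\sum_{x=0}^{p-1}\omega^x$ vanishes exactly when $\omega$ is a primitive $p$th root of unity, and varying $c$ over the valuation classes $v(c)=w$ for $w=0,\dots,n-1$ forces the multiset $\{v(a_1),\dots,v(a_n)\}$ to contain, hence equal, $\{0,1,\dots,n-1\}$. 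This is slicker and identifies the valuations in one stroke, at the cost of importing complex character sums into what is otherwise an elementary combinatorial statement (harmless here, since the proposition concerns only integers); the paper's argument is longer but stays entirely within base-$p$ arithmetic. Your sufficiency argument (injectivity via reducing $\sum_i p^{n-i}b_iy_i\equiv 0 \bmod p^n$ successively modulo $p, p^2,\dots$) also supplies the detail that the paper dismisses with ``clearly,'' and is sound since each $|y_i|\leq p-1$.
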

\begin{proof}
Note that $r\circ \eub'$ is surjective if and only if it is bijective.  If
$v(a_i)=n-i$ for $1\leq i\leq n$, then clearly the image of $r\circ
\eub'$ is $\mathbb{S}_{p^n}$.  So consider the converse: Let
$\eub'_n(x_1,\ldots ,x_n)=\sum_{i=1}^na_ix_i$ and induct on $n$. For
$n=1$ the statement holds, since
$r\circ\eub'_1:\mathbb{S}_p\longrightarrow\mathbb{S}_p$ is bijective if
and only if $\gcd(a_1,p)=1$. Assume the statement holds for $n-1$, and
consider it for $n$.  If $v(a_i)\geq 1$ for all $i$, then each
$\eub'_n(x_1,\ldots, x_n)$ is a multiple of $p$. So we may assume there
is an $a_i$ that is relatively prime to $p$. Relabel so that
$v(a_n)=0$.

We prove now that, for $0\leq k\leq p^{n-1}-1$, there exist
$x_{i,k}\in\mathbb{S}_p$ with  
\begin{equation}\label{multp}
kpa_n=\eub_n'(x_{1,k},\ldots ,x_{n-1,k},0)=x_{1,k}a_1+\cdots +x_{n-1,k}a_{n-1}.
\end{equation}
The case $k=0$ is clear. Assume the statement holds for $k-1$, and
consider it for $k$.
Since $r(kpa_n)\in\mathbb{S}_{p^n}$ and $r\circ\eub'_n$ is surjective,
there are $x_{i,k}\in\mathbb{S}_p$ such that 
$kpa_n=x_{1,k}a_1+x_{2,k}a_2+\cdots +x_{n,k}a_{n}$.
If $x_{n,k}\neq 0$ then, on subtracting 
$(k-1)pa_n=x_{1,k-1}a_1+\cdots +x_{n-1,k-1}a_{n-1}$, we find that
\begin{equation}\label{cont-inj}
(p-x_{n,k})a_n=(x_{1,k}-x_{1,k-1})a_1+\cdots +(x_{n-1,k}-x_{n-1,k-1})a_{n-1}.
\end{equation}
Let $y_n=p-x_{n,k}$, and, for $1 \leq i\leq n-1$, let
$$y_i=\begin{cases}x_{i,k-1}-x_{i,k}& \mbox{if }x_{i,k-1}\geq x_{i,k},\\
0& \mbox{if }x_{i,k-1}< x_{i,k}.
\end{cases}$$
Let $z_n=0$, and
for $1 \leq i\leq n-1$ let 
$$z_i=\begin{cases}0& \mbox{if }x_{i,k-1}\geq x_{i,k},\\
x_{i,k}-x_{i,k-1}& \mbox{if }x_{i,k-1}< x_{i,k}.
\end{cases}$$
Then \eqref{cont-inj} means that $\eub_n'(y_1,\ldots
,y_n)=\eub_n'(z_1,\ldots, z_n)$. As $y_n \neq z_n$, this contradicts the
injectivity of $r \circ \eub_n'$. Thus
$x_{n,k}=0$, and the statement holds for $k$.

Since $r\circ\eub_n$ is injective, \eqref{multp} establishes a bijection
between $\mathbb{S}_p^{n-1}$ and the multiples of $p$ in
$\mathbb{S}_{p^n}$. In particular, $a_i = \eub'_n(0,\ldots,
0,1,0\ldots 0)$ is a multiple of $p$ for each $i\leq n-1$.  Thus the
image of $\mathbb{S}_p^{n-1}$ under $\eub'_{n-1}(x_1,\ldots,
x_{n-1})=\sum_{i=1}^{n-1}x_i(a_i/p)$ maps modulo $p^{n-1}$ onto
$\mathbb{S}_{p^{n-1}}$. Using induction, we may relabel so that
$v(a_i/p)=n-1-i$ for $1\leq i\leq n-1$.  We conclude that $v(a_i)=n-i$ for
$1\leq i\leq n$.
\end{proof}

\bibliography{BCE-aif}

\end{document}